\documentclass[8pt]{article}
\usepackage{ragged2e}
\usepackage{csquotes}
\usepackage{varioref}
\usepackage{float}
\usepackage[thinlines]{easytable}
\usepackage{caption}
\usepackage{csquotes}
\usepackage{amsfonts,amssymb,amsmath,exscale,relsize,cite,enumitem,amsthm,multirow,physics}
\usepackage[numbers]{natbib}
\usepackage[usenames]{color}
\usepackage[colorlinks,citecolor=blue,urlcolor=blue,filecolor=blue,backref=page]{hyperref}
\usepackage{setspace}
\usepackage{appendix}
\usepackage{graphicx}%
\setcounter{MaxMatrixCols}{30}
\providecommand{\U}[1]{\protect\rule{.1in}{.1in}}
\textwidth 170mm \textheight 210mm \oddsidemargin -2mm
\evensidemargin -2mm \topmargin -15mm
\newtheorem{theorem}{Theorem}

\newtheorem{corollary}{Corollary}

\newtheorem{lemma}{Lemma}

\newtheorem{proposition}{Proposition}
\newtheorem{remark}{Remark}


\newtheorem{solution*}{Solution}

\def\U{\mathcal{U}}

\def\1{\mathbb{1}}

\newcommand\simiid{\stackrel{iid}{\sim}}
\newcommand\simind{\stackrel{ind}{\sim}}
\title{Consistent Group Selection using Global-local Shrinkage Priors in High Dimensions}
\author{Sayantan Paul, Prasenjit Ghosh and Arijit Chakrabarti}

\begin{document}

\maketitle

\begin{center}
    \textbf{Abstract}
\end{center}
Consider a high-dimensional normal linear regression model when the candidate regressors are inherently grouped. Our interest lies in the selection of grouped variables and the estimation of model parameters in a sparse asymptotic regime. We modeled the grouped regression coefficients using a broad class of \enquote{global-local} shrinkage priors, which can also be seen as a generalization of the standard \textit{g-prior} with a shrinkage parameter. The global shrinkage parameter is treated either as a tuning parameter or in an empirical Bayesian or full Bayesian way. We consider a group selection rule, namely the Half-Thresholding rule, and propose an estimator using this rule. Our methods enjoy the oracle property asymptotically in that they achieve variable selection consistency and optimal rate of estimation under a block orthogonal design. These are the first theoretical results of their kind using such priors in this context. In our simulation study, our proposed rules perform favorably with many existing methods. 
\section{Introduction}
Selecting the relevant predictors for a regression model is a classic problem in statistics. Potential
predictors or regressors are often inherently linked, forming clusters or groups. For instance, gene expression data may
have among the potential regressors groups of genes controlling similar phenotypical traits. On the other hand, regression models for stock market data may have groups of stocks from the same sector. The grouping structure
is also seen, for example, in multifactor ANOVA and nonparametric additive models. See, for instance, the discussions in Yuan and Lin (2006) \cite{yuan2006model}, Yang and Narisetty (2010) \cite{yang2020consistent}, Wang and Leng (2008) \cite{wang2008note}, and Wei and Huang (2010) \cite{wei2010consistent}, in this connection. Depending on the problem being addressed, the individual variables within a group can vary significantly. In cases where the focus is not on examining individual variables, the primary objective shifts to identifying only the important groups. As noted by Huang et al. (2012) \cite{huang2012selective}, when a continuous factor is represented through a set of basis functions, the individual variables are artificially constructed. Thus, rather than selecting significant individual members, determining which groups are significant overall is the priority. Another case of group selection can be found in the seemingly unrelated regressions (SUR) model proposed by Zellner (1962) \cite{zellner1962efficient} or in the multitask learning model within machine learning put forth by Caruana (1997) \cite{caruana1997multitask}; and further discussed by Argyriou, Evgeniou, and Pontil (2008) \cite{argyriou2008convex}. These models operate under the assumption that specific variables have similar roles across different tasks, leading to their selection or exclusion as a group.

Given these examples, it becomes crucial to establish rules that effectively identify relevant groups of regressors while disregarding the irrelevant ones. Consequently, variable selection essentially transforms into a problem of group selection. This issue has garnered significant attention from researchers over the past couple of decades.

This work focuses on group selection and estimating group regression coefficients within a linear regression framework. We examine a linear regression model containing $G$ groups of potential regressors or predictors defined as
\begin{equation} \label{eq:1.1}
\mathbf{y}=\mathbf{X}\boldsymbol{\beta}+\boldsymbol{\epsilon}=\sum_{g=1}^{G}\mathbf{X}_g\boldsymbol{\beta}_g+\boldsymbol{\epsilon}.\tag{1.1}
\end{equation}
Here, $\mathbf{y}$ is an $n \times 1$ vector of responses, $\mathbf{X}_g$ is an $n \times m_g$ design matrix and $\boldsymbol{\beta}_g$ is an $m_g \times 1$ vector of unknown regression coefficients for the $g^{th}$ group, where $g \in \{1, \ldots, G\}$ and $\sum_{g=1}^G m_g = p$. We further assume that the vector of unobserved residuals $\boldsymbol{\epsilon}$ has a $N(\mathbf{0},\sigma^2I_n)$ distribution. Our interest is in a sparse asymptotic setting where $G\equiv G_n$ increases at the same rate as the sample size $n$ and the number of \textit{active} groups $G_{A_{n}}$ grows at a slower rate than $G_n$, an active group being one with a vector of true non-zero group regression coefficients. In this work, we propose some Bayesian and empirical Bayesian methods for selecting and estimating the active group coefficients using a broad class of hierarchical priors on the grouped regression coefficients. These priors may be considered as \textit{global-local} mixtures (in the group selection problem) of the popular g-prior of Zellner (1986) \cite{zellner1986assessing}. 
We now briefly review the most relevant existing works in this field before motivating our study. Our discussion will focus on two broad classes of methods for variable selection (grouped variable selection) and estimation, with special emphasis on sparse problems.\\
\hspace*{0.5cm} The first class consists of certain examples of penalized likelihood methods where the log-likelihood in a model (specifying a subset of predictors to be included in the regression) is penalized by some measure of the magnitude of its corresponding regression coefficients. This makes full sense when the proportions of regressors with non-null (or sufficiently large) effects are known to be small. These methods estimate $\boldsymbol{\beta}$ by minimizing (in the ungrouped regression problem) an objective function of the following form
\begin{equation*}
S(\boldsymbol{\beta})=(\mathbf{y}-\mathbf{X}\boldsymbol{\beta})^{\mathrm{T}}(\mathbf{y}-\mathbf{X}\boldsymbol{\beta})+ \sum_{j=1}^{p} p_{\lambda_j}(|\beta_j|),
\end{equation*}
where $p_{\lambda_j}(|\beta_j|)$ is an appropriately chosen penalty function, and $\lambda_j > 0$ is the penalty parameter. Notably, by choosing $\lambda_j=\lambda$ for all $j$, $p_{\lambda}(|\beta_j|)=\lambda \beta^2_j$ corresponds to the Ridge regression of Hoerl and Kennard (1970) \cite{hoerl1970ridge}, while $p_{\lambda}(|\beta_j|)= \lambda |\beta_j|$ corresponds to the LASSO estimator of Tibshirani (1996) \cite{tibshirani1996regression}, the latter being very popular due to its ability to perform both estimation and variable selection simultaneously (by estimating many components of $\boldsymbol{\beta}$ as exactly zero). We also mention the SCAD of Fan and Li (2001) \cite{fan2001variable}, the elastic net of Zou and Hastie (2006) \cite{zou2006adaptive}), the MCP of Zhang (2009) \cite{zhang2010nearly} and the adaptive LASSO. The \textit{group} LASSO of Yuan and Lin (2006) \cite{yuan2006model} mimics LASSO for use in the group problem (\ref{eq:1.1}) and is defined as the minimizer of the following objective function:
\begin{equation*}
\text{S}(\boldsymbol{\beta})=\frac{1}{2}\left(\textbf{y}-\sum_{g=1}^{G}\mathbf{X}_g \boldsymbol{\beta}_g\right)^{\mathrm{T}}\left(\textbf{y}-\sum_{g=1}^{G}\mathbf{X}_g\boldsymbol{\beta}_g\right)+\lambda \sum_{g=1}^{G} \sqrt{m_g} \sqrt{{\boldsymbol{\beta}_g}^{\mathrm{T}}{\boldsymbol{\beta}_g}} \hspace{0.2cm}.
\end{equation*}
The adaptive group LASSO of Wang and Leng (2008) \cite{wang2008note} improves the performance of the group LASSO by using separate data-based penalties for each group similar to the adaptive LASSO.

The second class consists of Bayesian methods obtained from two key approaches. In the first approach, a Bayesian starts by assigning a prior probability distribution to the model space, namely a collection of models, where, under each model, a subset of the regression coefficients ($\beta_i$'s) are considered small (or exactly zero). In contrast, the remaining $\beta_i$'s are considered large (or nonzero). Under each model, a prior quantifying the uncertainty about the parameters (regression coefficients) is then specified. The model with the highest posterior probability (the Bayes rule with respect to 0-1 loss) is typically the model of choice. Variable selection is carried out using the model thus chosen by only keeping those variables/predictors in the linear regression whose regression coefficients are supposed to be large (or nonzero) according to this model. See, for instance, George and McCullough (1993) \cite{george1993variable}, George and McCullough (1997) \cite{george1997approaches}, Scott and Berger (2010) \cite{scott2010bayes}, Bayarri et al. (2012) \cite{bayarri2012criteria}, Maruyama et al. (2011) \cite{maruyama2011fully}, Mukhopadhyay et al. (2015) \cite{mukhopadhyay2015consistency}, Liang at. al. (2008) \cite{liang2008mixtures}, to name a few, in this connection. Two popular choices of priors on the regression coefficients in the above references are (i) the g-prior of Zellner (1986) \cite{zellner1986assessing} (or variants of it) on the nonzero regression coefficients under a given model and (ii) independent spike and slab priors on the regression coefficients. In the second approach, Bayesian directly specifies a prior distribution on the parameters of the full model, namely the model with all the possible parameters/regressors. Model/variable selection is performed by appropriately using the posterior distribution of the parameter vector. See, e.g., Scott et. al. (2012) \cite{polson2012local}, Datta and Ghosh (2013) \cite{datta2013asymptotic}, Ghosh et al. (2016) \cite{ghosh2016asymptotic}, Ghosh and Chakrabarti (2017) \cite{ghosh2017asymptotic}, Tang et. al. (2018) \cite{tang2018bayesian}, Li and Lin (2010) \cite{li2010bayesian}, Bhattacharya et al. (2015) \cite{bhattacharya2015dirichlet} for examples of this kind. The priors used therein are the continuous one-group priors, which will be discussed in detail later. Both these Bayesian approaches can be adapted to the problem of grouped variable selection, and our work is aligned with this second approach. We now briefly describe two-group and one-group priors on the regression coefficients.

A natural Bayesian approach to modeling parameters in sparse regression problems involves the use of two-group spike-and-slab priors for individual regression coefficients. In this framework, each regression coefficient is either degenerate at zero (or highly concentrated around zero) with a high probability, denoted as \( \nu \in (0,1) \). This probability \( \nu \) can depend on the number of predictors \( p \equiv p(n) \) or follow an absolutely continuous heavy-tailed distribution with a small probability of \( 1 - \nu \).

The prior distribution is thus a mixture that combines a degenerate probability at zero (or a distribution that is highly concentrated around zero, such as a normal distribution with a small variance) - referred to as the \enquote{spike} part and a heavy-tailed, nondegenerate, absolutely continuous part (for example, a normal distribution with a large variance) - known as the \enquote{slab} part. 

This modeling is achieved by associating a latent random vector \( (\gamma_1, \ldots, \gamma_p) \) with \( (\beta_1, \ldots, \beta_p) \). The \( \gamma_i \) are independent and identically distributed random variables, where \( \gamma_i = 0 \) with probability \( 1 - \nu \) and \( \gamma_i = 1 \) with probability \( \nu \). Consequently, if \( \gamma_i = 0 \), then \( \beta_i \) follows the spike distribution; if \( \gamma_i = 1 \), then \( \beta_i \) follows the slab distribution. 

Several variations of spike-and-slab priors have been proposed in the literature, including those by Mitchell and Beauchamp (1988) \cite{mitchell1988bayesian}, George and McCulloch (1993) \cite{george1993variable}, Geweke (1996) \cite{geweke1996variable}, and Rockova and George (2018) \cite{rovckova2018spike}, among others.


Over the past 15 years, the literature has presented various proposals for modeling unknown parameters in sparse situations through hierarchical one-group ``shrinkage" priors, which can be formulated as scale mixtures of normals. These priors require less computational effort than the two-group model, making them particularly advantageous in high-dimensional problems and complex parametric frameworks.

One of the key features of these priors is that they assign a high probability near the origin while still allowing for non-trivial probabilities for larger coefficients, which helps induce sparsity while accommodating larger signals simultaneously. When chosen carefully, they can effectively replicate the core idea of two-group priors. Most of these priors incorporate two types of shrinkage parameters: the \enquote{global shrinkage parameter} and the \enquote{local shrinkage parameters}. The global shrinkage parameter is designed to induce overall sparsity, while the local shrinkage parameters are meant to accommodate large signals.

The application of one-group shrinkage priors to various sparse problems, as well as the exploration of their theoretical properties, has been the focus of active research for some time. For example, Table 1 of Tang et al. (2018) \cite{tang2018bayesian} summarizes different global-local shrinkage priors that have been studied in the literature. This class of priors contains horseshoe introduced by Carvalho et al. (2009 \cite{carvalho2009handling}), the Laplace prior (\cite{park2008bayesian}), the normal-exponential-gamma prior (\cite{griffin2005alternative}), three-parameter beta normal priors (\cite{armagan2011generalized}), generalized double Pareto priors (\cite{armagan2013generalized}) and the Dirichlet-laplace prior (\cite{bhattacharya2015dirichlet}), among others.

These priors were initially developed to model the mean vector \(\boldsymbol{\beta}\) in the canonical normal means problem, specifically using \(p=n\), \(X=I_n\), and \(m_g=1\) for \(g=1,\ldots,n\) in \eqref{eq:1.1} and then applying (in the canonical ungrouped regression context) for each regression coefficient vector $\boldsymbol{\beta}_{j}$, for $j=1,2,\cdots,n$, the following prior distribution given by
\begin{equation*} \label{eq:1.2}
 \boldsymbol{\beta}_j \mid \lambda^2_j,\sigma^2, \tau^2 \simind \mathcal{N}(0,\lambda^2_j\sigma^2 \tau^2), \quad 
	\lambda^2_j \simind \pi(\lambda^2_j), \quad (\tau,\sigma^2) \sim \pi(\tau,\sigma^2). \tag{1.2}
\end{equation*}
In the above prior, $\lambda^2_j$ for $j=1,\ldots,p$ are the local shrinkage parameters while $\tau^2$ is the global shrinkage parameter. It is important to note that some penalized likelihood estimators can be derived as \textit{maximum a posteriori} (MAP) estimators corresponding to appropriate one-group shrinkage priors for the unknown regression coefficients. For example, Park and Casella (2008) \cite{park2008bayesian} observed that the usual LASSO estimator can be thought of as a MAP estimator corresponding to iid double exponential prior for the regression coefficients, and the double-exponential prior, in fact, can be expressed as a scale mixture of normals which can be written in the form of a one-group prior.

Like the group LASSO, many approaches in the \enquote{ungrouped} problem can be extended to the problem of grouped variable selection and estimation. The researchers used a multivariate-Laplacian one-group prior distribution to model the unknown group coefficients for group selection. This method is called Bayesian group LASSO, and the corresponding MAP estimator is known as Bayesian group LASSO estimator (Raman et al. 2009 \cite{raman2009bayesian} and Kyung et al. 2010 \cite{casella2010penalized}).

Two-group priors can also be used in the group selection problem by using a mixture of degenerate measure at $\mathbf{0}$ and a heavy-tailed absolutely continuous distribution $F$ over $\mathbb{R}^{m_g}$ on the group coefficients $\boldsymbol{\beta}_g$'s independently for $g=1,\ldots, G$. One such example is due to Xu and Ghosh (2015) \cite{xu2015bayesian}.

They used the following hierarchical formulation
\begin{align*} \label{eq:1.3}
	&\mathbf{y}|\mathbf{X},\boldsymbol{\beta},\sigma^2  \sim \mathcal{N}_n(\mathbf{X}\boldsymbol{\beta},\sigma^2I_n),\\
	\boldsymbol{\beta}_g  &\simind \pi_0\delta_{\{\mathbf{0}\}}(\cdot)+ (1-\pi_0) \mathcal{N}_{m_g}(\mathbf{0},\sigma^2 \tau_g^2 I_{m_g}), \textrm{ for } g=1, \cdots , G, \tag{1.3}  \\
	\tau_g^2 &\simind \text{Gamma}(\frac{m_g+1}{2},\frac{\lambda^2}{2}),g=1,2,\cdots,G, \\
	\sigma^2 &\sim \text{Inverse Gamma}(\alpha,\gamma),
\end{align*}
which is known as the Bayesian group LASSO with spike and slab priors (BGL-SS). For the same problem, Yang and Narisetty (2020) \cite{yang2020consistent} modified the BGL-SS by introducing a binary latent variable for each group to indicate the activeness of the group or not and proposing
spike and slab priors on the group regression coefficients depending on the latent variables. 

We are now in a position to articulate the motivation behind our work and outline our main contributions. To achieve this, we must delve deeper into the existing literature. This exploration brings to light many natural questions and issues and highlights unresolved matters in the literature. Our work aims to address some of these questions. 



Grouped variable selection, which involves the estimation of regression coefficients, is a generalization of traditional variable selection methods. We begin with the work of Tang et al. (2018) \cite{tang2018bayesian}. Under the assumption of an orthogonal design in an ungrouped context, Theorem 1 from Tang et al. (2018) \cite{tang2018bayesian} states that if the local shrinkage parameters $\lambda^2_i$ in equation \eqref{eq:1.2} follow polynomial-tailed priors (as described in equation \eqref{eq:1.4} below), then their proposed HT variable selection rule and estimators possess the oracle property (see Fan and Li (2001) \cite{fan2001variable}, Zou (2006) \cite{zou2006adaptive}, and Zou and Zhang (2009) \cite{zou2009adaptive} in this context). This property is defined to achieve both variable selection consistency and optimal estimation rates. However, this result is based on two assumptions: (a) the number of active regressors remains fixed as the sample size increases, and (b) the sparsity level is known. Both assumptions are often challenging to satisfy in high-dimensional problems. Therefore, it is essential to investigate whether similar results hold when these assumptions are not met.


The second point can be partially addressed by showing that an empirical Bayesian version of the procedure proposed by Tang et al. (2018) \cite{tang2018bayesian} has optimality properties. This problem remains unresolved in their work. Furthermore, a careful examination of Theorem 1 from Tang et al. (2018) \cite{tang2018bayesian} reveals a significant weakness in their argument regarding achieving the optimal estimation rate. Hence, the rigorous theoretical treatment of the oracle optimality properties of thresholding procedures based on one-group shrinkage priors for variable selection problems remains unanswered.


We aim to explore these issues within the broader context of group selection and to investigate whether the half-thresholding (HT) technique can be extended to group selection problems so that the corresponding oracle property holds without relying on assumptions (a) and (b). A positive outcome would not only resolve the open question left by Tang et al. (2018) \cite{tang2018bayesian} in the ungrouped context, but would also address the weakness of their proof.

For the group selection problem, Xu \textit{et al.} (2016) \cite{xu2016bayesian} used a variant of the horseshoe prior, which they called the group horseshoe prior. They used two sets of local shrinkage parameters to control shrinkage between the groups and within the groups simultaneously. They applied their method to a prediction problem but did not provide any rule for selecting active groups. Recently, Boss et al. (2023) \cite{boss2023group} also
proposed another version of global-local prior in the context of estimating the group coefficients when the covariates under study form a block-diagonal structure. They introduced group shrinkage and local shrinkage parameters along with the global shrinkage parameter, similar to Xu et al. (2016) \cite{xu2016bayesian}. Their choices were gamma prior for the group shrinkage parameter and inverse-gamma prior for the local shrinkage parameter. The reason behind this choice was that the inverse gamma prior being heavier-tailed than the prior would prevent the overregularization of nonnull coefficients being grouped with nulls. Boss et al. (2023) \cite{boss2023group} established posterior consistency and posterior concentration results for regression coefficients in linear models and mean parameters in sparse normal means models for their proposed prior. Since their target was the estimation of the group coefficients, they also did not provide any rule regarding the selection of a group or the use of the global parameter to incorporate the underlying level of sparsity. To our knowledge, the only work in the Bayesian context in which the oracle property of the proposed estimator has been studied is due to Xu and Ghosh (2014) \cite{xu2015bayesian}, though under the assumption that the number of groups is fixed. Yang and Narisetty (2020) \cite{yang2020consistent} studied selection consistency results but not the asymptotic optimality of their proposed estimator. Nevertheless, it is important to note that both works are under a two-group framework. In a nutshell, in the context of linear regression with grouped prediction, many questions related to the optimality of decision rule based on one-group global-local priors have not been addressed until now.

Motivated by the above discussion, we study in this article a thresholding rule in the group selection problem and also an estimator of the active groups based on a very broad class of one-group shrinkage priors having polynomial tails given by
\begin{equation*} \label{eq:1.4}
 \boldsymbol{\beta}_g \mid \lambda^2_g,\sigma^2, \tau^2 \sim \mathcal{N}_{m_g}(\mathbf{0},\lambda^2_g\sigma^2 \tau^2({\mathbf{X}}^{\mathrm{T}}_g\mathbf{X}_g)^{-1}), \quad 
	\pi(\lambda^2_g) \propto (\lambda^2_g)^{-a-1}L(\lambda^2_g),\tag{1.4}
\end{equation*}
where $a$ is a positive real number and $L:(0,\infty) \to (0,\infty)$ is a measurable non-constant slowly varying function in Karamata’s sense (see Bingham et al., 1987 \cite{bingham_goldie_teugels_1987}), that is, $\frac{L(\alpha x)}{L(x)} \to 1$ as $x \to \infty$, for any $\alpha>0$. Several one-group priors can be expressed in the form \eqref{eq:1.4}. See Section \ref{sec-2} in this context. We assume the error variance $\sigma^2$ to be known for our theoretical analysis, although for our simulation study, we assume $\pi(\sigma^2) \propto \frac{1}{\sigma^2}$, the Jeffreys prior. On the other hand, depending on whether the level of sparsity is known, $\tau$ is treated as a tuning parameter(which depends on $n$) or in an empirical Bayesian or full Bayesian way. See section \ref{sec-2.2} for motivation regarding modifying the hierarchical formulation.

Our proposed group selection rule is referred to as the half-thresholding (HT) rule (given in \eqref{eq:2.5}) and declares a group to be active if the ratio of the $\ell_2$ norm of the posterior mean of the regression coefficients to that of the ordinary least square estimate of the corresponding coefficient vector exceeds half. However, in this work, the decision rule is formulated as a byproduct of two propositions stated in Section \ref{sec-2.3}. Still, for the block-orthogonal design matrix, that decision rule becomes equivalent to the rule proposed by Tang et al. (2018) \cite{tang2018bayesian}. Consequently, our proposed half-thresholding (HT) decision is a generalization to that of Tang et al. (2018) \cite{tang2018bayesian} when the group size is unity. We also propose a corresponding \enquote{Half-thresholding} estimator of the active groups.
 

Our contributions to this article are as follows. Firstly, we propose a new general class of one-group global-local shrinkage priors in the case of group selection. This is achieved by considering the grouping structure while formulating the prior. Secondly, we propose a half-thresholding rule that can be easily implemented regardless of whether the underlying sparsity level is known. We first show that when the proportion of active groups is known, the global shrinkage parameter $\tau$ can be appropriately chosen so that the resulting decision rule becomes an oracle in the sense described before. When the proportion of active groups is unknown, we propose using an empirical Bayes estimate of the global shrinkage component. This estimate generalizes the empirical Bayes estimate proposed by van der Pas et al. (2014) \cite{van2014horseshoe} for large-scale signal detection problems. We show that the resulting data-adaptive half-thresholding rule enjoys the oracle optimality properties under very mild conditions. This is the first result of this kind in the literature.
Thirdly, as an immediate consequence of our rigorous analytical treatment, it readily follows that the variable selection rule proposed by Tang \textit{et al.} (2018) \cite{tang2018bayesian} based on a broad family of shrinkage priors of a group enjoys the oracle optimality properties in the ungrouped problem and settles the optimality of the empirical Bayes rule left open in the paper. However, our theoretical results hold when the true active groups grow substantially with $n$ instead of being fixed, as many existing works assume. Last, we also studied a full Bayes procedure of our proposed decision rule by assigning a non-degenerate prior to the global parameter on our proposed interval and established the oracle property in that context.
It is important to note that we need to develop novel and rigorous analytical techniques to establish these properties theoretically, and these techniques are the first of their kind. Finally, in our simulation studies, we use both empirical Bayes and full Bayes versions of our proposed decision rule and demonstrate their superior performance compared to some well-known group selection methods in the literature.

The remainder of the paper is organized as follows. In Section \ref{sec-2}, the hierarchical form of the modified class of global-local priors with polynomial tail and the proposed half-thresholding rule are described, along with the Gibbs sampling algorithm. Section \ref{sec-3} presents the main theoretical results of the paper. Section \ref{sec-4} deals with the simulation results. The proofs of all theoretical results can be found in Section \ref{sec-5}. The paper ends with some concluding remarks in section \ref{sec-6}.


\subsection{Notation}
For any two sequences of real numbers $\{a_n\}$ and $\{b_n\}$ with $b_n \neq 0$ for all $n$, $a_n \sim b_n$ implies $\lim_{n \to \infty}a_n/b_n=1$. By $a_n=O(b_n)$, and $a_n=o(b_n)$ we denote $|a_n/b_n|<M$ for all sufficiently large $n$, and $\lim_{n \to \infty}a_n/b_n=0$, respectively, $M>0$ being a global constant independent of $n$. We write $a_n \asymp b_n$ to denote that there exist two constants $c_1$ and $ c_2$ such that $0< c_1\leq  a_n/b_n \leq c_2<\infty $ for sufficiently large $n$.
Likewise, for any two positive real-valued functions $f_1(\cdot)$ and $f_2(\cdot)$ with a common domain of definition that is unbounded to the right $f_1(x) \sim f_2(x)$ denotes $\lim_{x \to \infty}f_1(x)/f_2(x)=1.$ Throughout this article, the indicator function of any set $A$ will always be denoted $I\{A\}$.\\

Let $G_{{A}_n}$ and $G_n$ denote the number of active groups and the total number of groups, respectively, with $G_{{A}_n} \leq G_n \leq n$. Since we are interested in the sparse situation, we assume that $G_{{A}_n}=o(G_n)$. Let $\boldsymbol{\beta}_{g}^{0}$ denote the true value of the vector of unknown coefficients $\boldsymbol{\beta}_g$. A matrix $\mathbf{A}$ of order $n \times m$ is said to be block orthogonal if for
any two sub-matrices ${\mathbf{A}}_i$ (of order $n \times m_i$) and ${\mathbf{A}}_j$ (of order $n \times m_j$), we have
${\mathbf{A}}^{\mathrm{T}}_i\mathbf{A}_j = \mathbf{0}$ for all $i \neq j$. 
For any matrix $\mathbf{A}$, $e_{\text{min}}\mathbf{A}$ and $e_{\text{max}}\mathbf{A}$ denote the minimum and the maximum eigenvalues of $\mathbf{A}$, respectively. For any square matrix $\mathbf{A}$, $\mathbf{A}^{\frac{1}{2}}$ is defined as $\mathbf{A}=\mathbf{A}^{\frac{1}{2}}\mathbf{A}^{\frac{1}{2}}$.
Throughout this article, we use the notation $\mathcal{D}$ to denote the data $\mathcal{D}=\{\mathbf{y}\}$. 
\section{Prior Selection and the Half-Thresholding Rule}
\label{sec-2}
Consider the linear model \eqref{eq:1.1}. Let $m=(m_1, \cdots,m_G)$ be the number of individual variables within each group, and $p=\sum_{g=1}^{G}m_g$ be the total number of variables under consideration. Let us assume that the design matrix for the $g^{\text{th}}$ group, denoted $\mathbf{X}_g$, is of full rank, for $g=1,2,\cdots,G$. 

\subsection{Hierarchical form}
\label{sec-2.1}
In this article, we consider the following Bayesian hierarchical structure given by
\begin{align*}\label{eq:2.1}
		\mathbf{y}\mid\mathbf{X},\boldsymbol{\beta},\sigma^2  &\sim \mathcal{N}_n(\mathbf{X}\boldsymbol{\beta},\sigma^2I_n), \\
	\boldsymbol{\beta}_g \mid \lambda^2_g,\sigma^2, \tau^2 &\sim \mathcal{N}_{m_g}(\mathbf{0},\lambda^2_g\sigma^2 \tau^2({\mathbf{X}}^{\mathrm{T}}_g\mathbf{X}_g)^{-1}), \ \text{independently for}\hspace*{0.1cm} g=1,2,\cdots,G, \tag{2.1} \\
	\lambda^2_g &\sim \pi(\lambda^2_g), \ \text{independently for} \hspace*{0.1cm} g=1,2,\cdots,G, \ \text{and} \\
	(\tau,\sigma^2) &\sim \pi(\tau,\sigma^2).
\end{align*}
In (\ref{eq:2.1}), $\lambda_g$ denotes the local shrinkage parameter for the $g^{th}$ group, and $\tau$ denotes the global shrinkage parameter. Here, $\pi(\cdot)$ denotes a non-degenerate prior distribution used to model the global shrinkage component $\tau$, and the error variance $\sigma^2$. Polson and Scott (2010)\cite{polson2010shrink} suggested that in sparse problems, prior distributions on local and global shrinkage parameters should have the following properties:
\begin{enumerate}
    \item The prior on the local shrinkage parameter should have thick tails to accommodate the non-null coefficients, and
    \item  The prior on the global shrinkage parameter should have substantial mass near the origin to account for sparsity.
\end{enumerate}

  Motivated by this and the previous works of Ghosh et al. (2016) \cite{ghosh2016asymptotic}, and Ghosh and Chakrabarti (2017) \cite{ghosh2017asymptotic}, we assume throughout this article that the prior distribution of $\lambda^2_g$ will be of the form
\begin{equation} \label{eq:2.2}
	\pi(\lambda^2_g) \propto (\lambda^2_g)^{-a-1}L(\lambda^2_g).\tag{2.2}
\end{equation}
In (\ref{eq:2.2}) above, $a$ is a positive real number, and $L:(0,\infty) \to (0,\infty)$ is a measurable non-constant slowly
varying function in Karamata’s sense (see Bingham et al., 1987 \cite{bingham_goldie_teugels_1987}), that is, $\frac{L(\alpha x)}{L(x)} \to 1$  as $x \to \infty$, for any $\alpha>0$. 
Priors of the form given in \eqref{eq:2.2} are naturally heavy-tailed.
For an orthogonal design $\mathbf{X}$, the class of one-group shrinkage priors given in \eqref{eq:2.1} satisfying \eqref{eq:2.2} covers a broad array of heavy-tailed global-local shrinkage prior distributions such as the $t$-prior due to Tipping(2001) \cite{tipping2001sparse}, the negative exponential gamma prior due to Griffin and Brown (2005) \cite{griffin2005alternative}, the Horseshoe prior of Carvalho et al. (2009) \cite{carvalho2009handling}, the three-parameter beta normal mixtures of Armagan et al. (2011) \cite{armagan2011generalized}, the generalized double Pareto priors due to Armagan et al. (2013) \cite{armagan2013generalized}, the inverse gamma priors, just to name a few. See, for instance, Ghosh et al. (2016) \cite{ghosh2016asymptotic} and Ghosh and Chakrabarti (2017) \cite{ghosh2017asymptotic}, in this context.\\
\hspace*{0.5cm} The \textit{global shrinkage} parameter $\tau$ is either treated as a tuning parameter $\tau_n$ based on the proportion of non-zero means or is treated in an empirical Bayes or fully Bayesian way depending on whether the proportion of active groups is known or not. For the theoretical development of this paper, we assume that the error variance term $\sigma^2$ is fixed in \eqref{eq:2.1}. See Castillo, Schmidt-Hieber and van der Vaart \cite{castillo2015bayesian}, Rigollet and Tsybakov \cite{rigollet2012sparse} for similar treatment of $\sigma^2$.  On the other hand, for simulation, we employ Jeffry's prior to model the unknown $\sigma^2$. \\
\hspace*{0.5cm} We further assume the following conditions on the slowly varying function $L(\cdot)$ defined in \eqref{eq:2.2}:\\
{\textbf{\hypertarget{assumption1}{Assumption 1:}}}\\
  For $a \geq \frac{1}{2}$:
  \begin{itemize}
        \item There  exists some positive real constant $c_0$ such that $L(t) \geq c_0 \textrm{ for all } t \geq t_0$, for some $t_0>0$, choice of which depends on both $L$ and $c_0$.\\
    \item There exists some $M \in (0,\infty)$ such that $\sup\limits_{t \in (0,\infty)} L(t) \leq M$.
    \end{itemize}

\subsection{Motivation for the Modification of Prior }
\label{sec-2.2}
In this subsection, we motivate the readers regarding the modification in the hierarchical formulation of the general class of one-group shrinkage priors considered in \eqref{eq:2.1} satisfying \eqref{eq:2.2}. As mentioned in the introduction, Tang et al. (2018) \cite{tang2018bayesian} considered the following hierarchical formulation based on the same class of global-local priors in regression problem,
\begin{align*}\label{eq:2.2.1}
    \mathbf{y}|\mathbf{X},\boldsymbol{\beta},\sigma^2  &\sim \mathcal{N}_n(\mathbf{X}\boldsymbol{\beta},\sigma^2I_n),\\
     \boldsymbol{\beta}_j \mid \lambda^2_j,\sigma^2, \tau^2 &\simind \mathcal{N}(0,\lambda^2_j\sigma^2 \tau^2), \textrm{ for } j=1,2,\cdots,p, \tag{2.3}\\
     \lambda^2_j &\simind \pi(\lambda^2_j)=K(\lambda^2_j)^{-a-1}L(\lambda^2_j),
\end{align*}
where $K$ and $L(\cdot)$ are same as discussed before. Hence, in case of a group selection problem, a straightforward extension in the hierarchical formulation would be to model each group coefficient $\boldsymbol{\beta}_g$ as $$\boldsymbol{\beta}_g|  \lambda^2_g,\sigma^2, \tau^2 \simind \mathcal{N}_{m_g}(\mathbf{0},\lambda^2_g\sigma^2 \tau^2I_{m_g})$$ keeping the prior on the local shrinkage coefficient same as before. However, due to this formulation mentioned in \eqref{eq:2.2.1}, Tang et al. (2018) \cite{tang2018bayesian} were able to propose a decision rule for detecting a variable to be significant or not only when the design matrix was orthogonal. So, the problem of formulating the decision rule remains the same while the selection of groups is of interest.
Hence, in a group selection problem, 
it is natural to question about the 
 existence of a decision rule that can declare a group to be active or inactive even if the design matrix is not orthogonal. This was our first and main motivation regarding the modification. On the other hand, keeping in mind the suggestion of Gelman (2006) \cite{gelman2006prior}, the prior on $\boldsymbol{\beta}_g$ was supposed to be such that the variance of the prior distribution of $\boldsymbol{\beta}_g$ should be at the same scale as that of the sufficient statistic 
   $\boldsymbol{\widehat\beta}_g$. This intuitively suggests the use of g-prior due to Zellner (1986) \cite{zellner1986assessing}. The most
   relevant work in this context is due to
   Som et al. (2015) \cite{som2014block} proposed the block hyper-g prior while studying Lindley’s Paradox. However, similar to the usual g-prior, their proposed prior neither models the global and local shrinkage coefficients separately, nor provides any decision rule for detecting whether a group is important or not. This raises the question of whether the group coefficients can be modeled in such a way that the prior can be thought of as a generalization of g-prior having a flavor of global-local priors mixed in it. This
   works as another motivation for the modification of the prior distribution. In the next subsection, we justify our hierarchical form \eqref{eq:2.1} satisfying \eqref{eq:2.2} by
   providing answers to both of the questions raised in this subsection.

\subsection{The Half-Thresholding (HT) Rule}
\label{sec-2.3}
In this subsection, we propose a rule for deciding whether a group is active or not. The rule is motivated by two key observations, which are stated as two propositions below. Proofs of these two are presented in Section \ref{sec-5}. \\

\hspace*{0.5cm} Before stating them, we note that for a block orthogonal design matrix, $\mathbf{X}$ within the hierarchical framework of \eqref{eq:2.1}, the posterior mean of $\boldsymbol{\beta}_g$ conditioned on $(\lambda_g,\tau_n,\sigma^2,\mathcal{D})$ is given by
\begin{equation*}
E(\boldsymbol{\beta}_g\mid\lambda_g,\tau_n,\sigma^2,\mathcal{D})=(1-\kappa_g)\widehat{\boldsymbol{\beta}}_g \hspace{0.1cm},
\end{equation*}
where $\kappa_g=1/(1+\lambda^2_g \tau^2)$ and $\widehat{\boldsymbol{\beta}}_g$ is the least square estimate of ${\boldsymbol{\beta}}_g$.
Therefore, by Fubini's Theorem, it follows that the posterior mean of $\boldsymbol{\beta}_g$, denoted as ${\widehat{\boldsymbol{\beta}}_g}^{\text{PM}}$ is of the form
\begin{equation*} \label{eq:2.6}
	{\widehat{\boldsymbol{\beta}}_g}^{\text{PM}}=	E(\boldsymbol{\beta}_g|\mathcal{D})=(E(1-\kappa_g\mid\tau_n,\sigma^2,\mathcal{D}))\widehat{\boldsymbol{\beta}}_g \hspace{0.1cm}. \tag{2.4}
\end{equation*}
Thus $E(1-\kappa_g\mid\tau_n,\sigma^2,\mathcal{D})$ is the factor by which the usual estimator is shrunk in the Bayesian formulation. The propositions are about the behavior of the shrinkage factor under the null and the alternatives. 

\begin{proposition}
	\label{prop-1}
	Suppose that the $g^{th}$ group is inactive, that is, $\boldsymbol{\beta}_{g}^{0}=\mathbf{0}$. If $\tau_n \to 0$ as $n \to \infty$, then $E(1-\kappa_g\mid\tau_n,\sigma^2,\mathcal{D}) \xrightarrow{P} 0$ as $n \to \infty$.
\end{proposition}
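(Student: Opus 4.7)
The plan is to obtain a closed-form ratio representation of $E(1-\kappa_g \mid \tau_n,\sigma^2,\mathcal{D})$ and then show, via sharp lower and upper bounds on the denominator and numerator, that this ratio vanishes in probability whenever the data statistic $T_g=\widehat{\boldsymbol{\beta}}_g^{\mathrm{T}}(\mathbf{X}_g^{\mathrm{T}}\mathbf{X}_g)\widehat{\boldsymbol{\beta}}_g/\sigma^2$ is $O_P(1)$, which is exactly what happens under the null. First, using the block-orthogonal factorization of the likelihood, the marginal density of $\widehat{\boldsymbol{\beta}}_g$ given $(\lambda_g^2,\tau_n,\sigma^2)$ is $\mathcal{N}_{m_g}(\mathbf{0},(1+\lambda_g^2\tau_n^2)\sigma^2(\mathbf{X}_g^{\mathrm{T}}\mathbf{X}_g)^{-1})$, so Bayes' rule gives
\begin{equation*}
E(1-\kappa_g\mid\tau_n,\sigma^2,\mathcal{D})=\frac{\int_0^\infty (1-\kappa_g)\,\kappa_g^{m_g/2}\exp(-T_g\kappa_g/2)\,(\lambda_g^2)^{-a-1}L(\lambda_g^2)\,d\lambda_g^2}{\int_0^\infty \kappa_g^{m_g/2}\exp(-T_g\kappa_g/2)\,(\lambda_g^2)^{-a-1}L(\lambda_g^2)\,d\lambda_g^2},
\end{equation*}
with $\kappa_g=1/(1+\lambda_g^2\tau_n^2)$.

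Next, the substitution $\kappa=\kappa_g$, i.e.\ $\lambda_g^2=(1-\kappa)/(\tau_n^2\kappa)$, converts both integrals into integrals over $\kappa\in(0,1)$; the Jacobian factor $\tau_n^{2a}$ cancels between numerator and denominator, producing
\begin{equation*}
E(1-\kappa_g\mid\tau_n,\sigma^2,\mathcal{D})=\frac{\int_0^1 \kappa^{m_g/2+a-1}(1-\kappa)^{-a}\exp(-T_g\kappa/2)L\!\left(\tfrac{1-\kappa}{\tau_n^2\kappa}\right)d\kappa}{\int_0^1 \kappa^{m_g/2+a-1}(1-\kappa)^{-a-1}\exp(-T_g\kappa/2)L\!\left(\tfrac{1-\kappa}{\tau_n^2\kappa}\right)d\kappa}.
\end{equation*}

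To lower bound the denominator, I would set $\delta_n=t_0\tau_n^2/(1+t_0\tau_n^2)$, so that for every $\kappa\in[1/2,1-\delta_n]$ the argument $(1-\kappa)/(\tau_n^2\kappa)\geq t_0$, and Assumption~1(1) gives $L\geq c_0$ on this range. Bounding $\kappa^{m_g/2+a-1}$ and $\exp(-T_g\kappa/2)$ below by positive constants (the latter by $\exp(-T_g/2)$), and using $\int_{1/2}^{1-\delta_n}(1-\kappa)^{-a-1}d\kappa\sim \delta_n^{-a}/a$, yields the key lower bound
\begin{equation*}
\textrm{Denominator}\;\geq\; C_1\,e^{-T_g/2}\,\tau_n^{-2a}
\end{equation*}
for all large $n$, where $C_1>0$ depends only on $c_0,t_0,a,m_g$.

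For the numerator, Assumption~1(2) gives $L\leq M$, so it is at most $M\int_0^1 \kappa^{m_g/2+a-1}(1-\kappa)^{-a}d\kappa$ when $a<1$ (a finite Beta integral, $O(1)$), while for $a\geq 1$ I would split the integral at $1-\delta_n$ and compute $\int_0^{1-\delta_n}(1-\kappa)^{-a}d\kappa=O(\log(1/\delta_n))$ if $a=1$ and $O(\delta_n^{1-a})=O(\tau_n^{2-2a})$ if $a>1$; the tail $\int_{1-\delta_n}^1(1-\kappa)^{-a}d\kappa$ is of the same order. Combining with the denominator bound, the posterior shrinkage factor satisfies
\begin{equation*}
E(1-\kappa_g\mid\tau_n,\sigma^2,\mathcal{D})\;\leq\; C_2\,e^{T_g/2}\cdot\max\!\bigl(\tau_n^{2a},\,\tau_n^{2}\log(1/\tau_n)\bigr).
\end{equation*}
Under the null, $\widehat{\boldsymbol{\beta}}_g\sim\mathcal{N}_{m_g}(\mathbf{0},\sigma^2(\mathbf{X}_g^{\mathrm{T}}\mathbf{X}_g)^{-1})$, so $T_g\sim\chi^2_{m_g}=O_P(1)$ and hence $e^{T_g/2}=O_P(1)$; since $\tau_n\to 0$, the bound is $o_P(1)$, completing the proof.

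The main technical obstacle is the integrability near $\kappa=1$ in the numerator when $a\geq 1$: the factor $(1-\kappa)^{-a}$ is no longer integrable there, which is why the split at $1-\delta_n$ is essential and must be matched to the same $\delta_n$ used to activate the lower bound $L\geq c_0$ in the denominator. Making these two scales coincide is what turns the ratio into a clean power of $\tau_n$ (up to logs), and is the single place where Assumption~1 is truly used.
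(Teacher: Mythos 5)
Your overall strategy is the same one the paper uses: write the shrinkage weight as a ratio of two integrals over $\kappa\in(0,1)$ (your representation is exactly the posterior in \eqref{eq:3.1}), bound that ratio by a deterministic vanishing factor times $e^{W_{n,g}/2}$, where $W_{n,g}=n\widehat{\boldsymbol{\beta}}_g^{\mathrm{T}}\mathbf{Q}_{n,g}\widehat{\boldsymbol{\beta}}_g/\sigma^2$ (your $T_g$), and then finish with $W_{n,g}\sim\chi^2_{m_g}=O_P(1)$ under the null and Slutsky; the paper packages the bound as Lemma \ref{lem3}. Your derivation of the ratio, the change of variables, and the denominator lower bound of order $e^{-T_g/2}\tau_n^{-2a}$ (via Assumption 1(1) on $[1/2,1-\delta_n]$ with $\delta_n=t_0\tau_n^2/(1+t_0\tau_n^2)$) are correct, and for $\tfrac12\le a<1$ the argument goes through as written.

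There is, however, a genuine gap in the numerator bound when $a\ge 1$. Once you replace $L$ by its global bound $M$, the tail piece is $M\int_{1-\delta_n}^{1}\kappa^{m_g/2+a-1}(1-\kappa)^{-a}\,d\kappa=+\infty$ for every $a\ge 1$, so the claim that ``the tail is of the same order'' cannot be justified by Assumption 1(2); boundedness of $L$ is simply the wrong tool near $\kappa=1$. What controls that region is the behaviour of $L$ near $0$, i.e.\ the properness of the prior, $\int_0^\infty t^{-a-1}L(t)\,dt=K^{-1}<\infty$: near $\kappa=1$ the argument $\tfrac{1-\kappa}{\tau_n^2\kappa}$ tends to $0$, and substituting $t=\tfrac{1-\kappa}{\tau_n^2\kappa}$ turns the tail into at most a constant multiple of
\begin{equation*}
\tau_n^{2-2a}\int_0^{t_0} t^{-a}L(t)\,dt \;\le\; \tau_n^{2-2a}\, t_0\int_0^{t_0} t^{-a-1}L(t)\,dt \;<\;\infty,
\end{equation*}
which, divided by your denominator bound, contributes $O\!\left(e^{T_g/2}\tau_n^{2}\right)=o_P(1)$. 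This is precisely how the paper's Lemma \ref{lem3} handles it: in the $s=\tau_n^{-2}(\kappa^{-1}-1)$ parametrization, the terms $A_{1,\tau_n}$ and $A_{2,\tau_n}$ (which correspond to $\kappa$ near $1$) are bounded through $\int_0^\infty s^{-a-1}L(s)\,ds=K^{-1}$, while only the term $A_{3,\tau_n}$ uses the Karamata/boundedness properties of $L$, yielding an overall bound of order $\tau_n e^{W_{n,g}/2}$. With this one repair — invoking the integrability of $t^{-a-1}L(t)$ at the origin for the $[1-\delta_n,1)$ piece — your proof is complete and matches the paper's in substance.
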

\begin{proposition}
	\label{prop-2}
	Suppose that the $g^{th}$ group is active, that is, $\boldsymbol{\beta}_{g}^{0} \neq \mathbf{0}$. Consider the following assumptions:
	\begin{enumerate}[label=(A\arabic*)]
    \item\label{assmp-1} $\hspace{0.1cm} \textrm{for any}$ active group $g$, there exists some global constant $C_1>0$ such that $e_{min}\mathbf{Q}_{n,g}>C_1$.
    \item \label{assmp-2} $\hspace{0.1cm}  \textrm{for any}$ active group $g, \min_{j} |\beta^{0}_{gj}| > m_n $ with $m_n \propto n^{-b} \textrm{ and } 0 \leq b<\frac{1}{2}$,
    \item \label{assmp-3}  the group size $m_g$ satisfies $s= \sup\limits_{n \geq 1}\max\{m_g: g=1,\cdots, G_n\}<\infty$,
    \item \label{assmp-6} the global parameter $\tau_n \to 0$ as $n \to \infty$ such that $\log (\frac{1}{\tau_n})\lesssim \log G_n$,
    
\end{enumerate}
Under the assumptions \ref{assmp-1}-\ref{assmp-6}, $E(1-\kappa_g\mid\tau_n,\sigma^2,\mathcal{D}) \xrightarrow{P} 1$ as $n \to \infty$.
\end{proposition}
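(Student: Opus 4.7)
The plan is to work with the reparametrization $u:=\lambda_g^2\tau_n^2$ and show that the posterior of $u$ given the data concentrates on values of order $T_g$ or larger, where $T_g:=\widehat{\boldsymbol{\beta}}_g^{\mathrm{T}}\mathbf{X}_g^{\mathrm{T}}\mathbf{X}_g\widehat{\boldsymbol{\beta}}_g/\sigma^2$ plays the role of sufficient statistic under block orthogonality. Since $1-\kappa_g = u/(1+u)$, such concentration immediately forces $E(1-\kappa_g\mid\tau_n,\sigma^2,\mathcal{D}) = E[u/(1+u)\mid\tau_n,\sigma^2,\mathcal{D}]$ close to $1$. Applying the change-of-variable Jacobian to the prior \eqref{eq:2.2} and multiplying by the marginal Gaussian density of $\widehat{\boldsymbol{\beta}}_g$ under block orthogonality yields the closed-form posterior
\begin{equation*}
\pi(u\mid\tau_n,\sigma^2,\mathcal{D}) \;\propto\; u^{-a-1}\,L\!\left(\tfrac{u}{\tau_n^2}\right)(1+u)^{-m_g/2}\exp\!\left(-\tfrac{T_g}{2(1+u)}\right),\qquad u>0.
\end{equation*}

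The first step is a lower bound on $T_g$. Writing $\widehat{\boldsymbol{\beta}}_g = \boldsymbol{\beta}_g^0 + (\mathbf{X}_g^{\mathrm{T}}\mathbf{X}_g)^{-1}\mathbf{X}_g^{\mathrm{T}}\boldsymbol{\epsilon}$ under block orthogonality, I expand $T_g$ into signal, cross, and noise terms. Under \ref{assmp-1} and \ref{assmp-2}, the signal is at least of order $n^{1-2b}$ with $1-2b>0$; under \ref{assmp-3}, the noise follows $\chi_{m_g}^2 = O_P(1)$; the cross term is $O_P(n^{(1-2b)/2})$. Hence $T_g\geq\delta n^{1-2b}$ with probability tending to $1$ for some $\delta>0$.

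The second and main step is the concentration claim. Setting the threshold $u_\star = T_g^{1/2}$, I aim to show $\Pr(u\leq u_\star\mid\mathcal{D})\xrightarrow{P}0$. I lower bound the denominator by restricting integration to $u\in[T_g,\infty)$: there the likelihood exponential exceeds $e^{-1/2}$, the polynomial factor is controlled, and crucially $L(u/\tau_n^2)\geq c_0$ by Assumption 1(a) since $T_g/\tau_n^2\to\infty$ under \ref{assmp-6}; this yields a lower bound of order $T_g^{-a-m_g/2}$. I upper bound the numerator by pulling the exponential $\exp(-T_g/(2(1+u_\star)))\approx e^{-T_g^{1/2}/2}$ out of the integral and using $L\leq M$ (Assumption 1(b)) together with the implicit properness of the prior on $\lambda_g^2$, the remaining factor being polynomial in $\tau_n^{-1}$ and $T_g$. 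Putting these together,
\begin{equation*}
\Pr(u\leq u_\star\mid\mathcal{D})\;\lesssim\;\exp(-T_g^{1/2}/2)\,\tau_n^{-2a}T_g^{a+m_g/2},
\end{equation*}
which under \ref{assmp-6} is super-polynomially small times polynomial and hence $o_P(1)$. Then $E[u/(1+u)\mid\mathcal{D}]\geq (u_\star/(1+u_\star))\,\Pr(u>u_\star\mid\mathcal{D})\xrightarrow{P}1$, with the upper bound $\leq 1$ being trivial.

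The main obstacle is the concentration step. One cannot lower bound the denominator merely by a fixed-interval truncation because the resulting bound would be beaten by the $\tau_n^{-2a}$ factor arising from the numerator as $\tau_n\to 0$. Instead the denominator must be integrated over a range that grows with $T_g$, and on that range the slowly varying function $L$ must be controlled from below via Assumption 1(a); this is precisely where \ref{assmp-6}'s requirement $T_g/\tau_n^2\to\infty$ enters. The logarithmic growth restriction on $1/\tau_n$ in \ref{assmp-6} then ensures that the exponential factor from the likelihood dominates all polynomial corrections.
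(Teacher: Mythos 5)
Your proposal is correct, and it proves the proposition by the same underlying mechanism as the paper --- a posterior tail bound in which a polynomial factor $\tau_n^{-2a}$ competes with an exponential factor $e^{-c\,T_g^{\gamma}}$ in the statistic $T_g=n\widehat{\boldsymbol{\beta}}_g^{\mathrm{T}}\mathbf{Q}_{n,g}\widehat{\boldsymbol{\beta}}_g/\sigma^2$, combined with a proof that $T_g$ grows polynomially in $n$ under \ref{assmp-1}--\ref{assmp-2} while $\log(1/\tau_n)\lesssim\log G_n\le\log n$ by \ref{assmp-6} --- but the implementation differs at both stages. The paper works with a \emph{fixed} shrinkage threshold: it bounds $E(\kappa_g\mid\tau_n,\sigma^2,\mathcal{D})$ by $\epsilon_0/2+P(\kappa_g>\epsilon_0/2\mid\cdot)$, invokes the ready-made concentration inequality of Lemma \ref{lem4} (inherited from Ghosh et al.\ 2016, and this is where Assumption 1 enters, exactly as in your derivation) to reduce the problem to bounding $P(T_g<d_n)$ with $d_n\asymp\log(1/\tau_n)$, and then controls that probability \emph{coordinatewise} via Gaussian tails and Mills' ratio, treating $\min_j\beta^0_{gj}>0$ and $\min_j\beta^0_{gj}<0$ as separate cases. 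You instead re-derive the tail bound from scratch in the $u=\lambda_g^2\tau_n^2$ parametrization with a \emph{data-dependent} cutoff $u_\star=\sqrt{T_g}$ (your numerator/denominator estimates --- exponential pulled out on $[0,u_\star]$, prior integrability giving $\tau_n^{-2a}$, and $L\ge c_0$ on $[T_g,\infty)$ --- are exactly the internal mechanics of Lemma \ref{lem4}, yielding decay $e^{-\sqrt{T_g}/2}$ instead of $e^{-cT_g}$, which is immaterial here), and you lower-bound $T_g\gtrsim n^{1-2b}$ by a global signal/cross/noise decomposition of the noncentral quadratic form rather than per-coordinate tail bounds. Your route is more self-contained and avoids the sign case analysis; the paper's route buys an explicit, quantitative bound on $P(T_g<d_n)$ (of order $n^{-1/2+b}e^{-cn^{1-2b}}$) which it later reuses when summing these probabilities over all active groups in the proofs of Theorems \ref{Thm-1}--\ref{Thm-2} --- for the proposition itself your qualitative "with probability tending to one" statement suffices, but for the later results you would need to make your step-one bound quantitative in the same way.
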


Propositions \ref{prop-1} and \ref{prop-2} leads to the decision rule:\vspace{2mm}

$\textrm{For each } g=1,\cdots,G$,
\begin{align*} \label{eq:2.7}
\text{ the } g^{th} \text{group is considered active if }  E(1-\kappa_g\mid\tau_n,\sigma^2,\mathcal{D}) >0.5, \textrm{ and is inactive otherwise.}
\tag{2.5}
\end{align*}
    

\begin{remark}
	Proposition \ref{prop-1} indicates that if the $g^{th}$ group is inactive, our proposed thresholding rule detects the same, and the only condition required to establish this is that $\tau_n=o(1)$ as $n\rightarrow\infty$. 
\end{remark}
\begin{remark}
	Proposition \ref{prop-2} implies that if $\tau_n \to 0$ not too fast, our thresholding rule successfully identifies an active group (asymptotically) provided the design matrix satisfies a simple condition. However, the condition 
    $\log (\frac{1}{\tau_n})\lesssim \log G_n$ allows to 
    choose $\tau_n$ from a wide range of values. The importance of other conditions is discussed in detail in Remark \ref{remark-4}.
\end{remark}
Since our proposed decision rule declares a group to be active or not depending on whether $E(1-\kappa_g\mid\tau_n,\sigma^2,\mathcal{D})$ exceeds half or not, the rule is called \textit{Half-Thresholding (HT) rule}.
Note that using \eqref{eq:2.6}, the decision rule \eqref{eq:2.7} can be alternatively stated as:
\begin{equation} \label{eq:2.5}
\text{the }	g^{th} \text{group is considered active if } \frac{||{\widehat{\boldsymbol{\beta}}_g}^{\text{PM}}||_{2}}{||\widehat{\boldsymbol{\beta}}_g||_{2}}>0.5, \textrm{ for } g=1,2,\cdots,G. \tag{2.6}
\end{equation}
For unit group size, i.e., $m_g=1, g=1,2,\cdots, G$, this is exactly the variable selection rule of Tang et al. (2018) \cite{tang2018bayesian}. This way, our proposed thresholding rule is a generalization to that of Tang et al. (2018) \cite{tang2018bayesian}, although their motivation for proposing this rule was different.
We define our half-thresholding(HT) estimator of $\boldsymbol{\beta}_g$ corresponding to the variable selection rule as
\begin{equation*} \label{eq:2.8}
	{\widehat{\boldsymbol{\beta}}_g}^{\text{HT}}={\widehat{\boldsymbol{\beta}}_g}^{\text{PM}}I\big\{E(1-\kappa_g\mid\tau_n,\sigma^2,\mathcal{D}) >0.5\big\}, \tag{2.7}
\end{equation*}
where ${\widehat{\boldsymbol{\beta}}_g}^{\text{PM}}$ denotes the posterior mean of the unknown group coefficient $\boldsymbol{\beta}_g$ corresponding to the $g^{\text{th}}$ group. 
\hspace*{0.5cm}	Note that our proposed decision rule \eqref{eq:2.7} uses the global shrinkage parameter $\tau$ as a tuning parameter chosen depending on the sample size $n$( and the $G_{{A}_n}$). This gives rise to the question of the treatment of $\tau$ in fact when $G_{{A}_n}$ is unknown, which happens very often. A natural data-adaptive solution to this problem would be the use of some empirical Bayes estimate(s) of $\tau$ by learning through the data. For the recovery of a sparse normal means vector using the horseshoe prior, van der Pas et al. (2014)\cite{van2014horseshoe} proposed an empirical Bayes estimator of $\tau$ given by
\begin{equation*} \label{eq:2.9}
	\widehat{\tau}=\text{max} \biggl\{\frac{1}{n}, \frac{1}{c_2n}\sum_{i=1}^{n}1 \left(\frac{|y_i|}{\sigma}>\sqrt{c_1\log{n}}\right)\biggr\}, \tag{2.8}
\end{equation*}
where $c_1$ and $c_2$ are two positive constants with  $c_1 \geq 2$ and $c_2 \geq 1$.
Motivated by this, we consider the following empirical Bayes estimate of $\tau$ given by
\begin{equation*} \label{eq:2.10}
	{\widehat{\tau}}^{\text{EB}}=\text{max}\biggl\{\frac{1}{G_n}, \frac{1}{c_2G_n}\sum_{g=1}^{G_n}1\left(\frac{n\widehat{\boldsymbol{\beta}}_g^{\mathrm{T}}\mathbf{Q}_{n,g}\widehat{\boldsymbol{\beta}}_g}{\sigma^2}>c_1 \log{G_n}\right)\biggr\}, \tag{2.9}
\end{equation*}
where $G_n$ denotes the total number of groups that varies with $n$ and satisfies $G_n\leq n$. From the above definition, it readily follows that ${\widehat{\tau}}^{\text{EB}}$ always lies between $\frac{1}{G_n}$ and $1$. Since a lower bound of this estimator is $\frac{1}{G_n}$, it cannot collapse to zero. Collapsing of the estimator to zero is a major concern in the context of the use of such empirical Bayes procedures as mentioned by several authors, such as Carvalho et al. (2009) \cite{carvalho2009handling}, Scott and Berger (2010) \cite{scott2010bayes}, Bogdan et al. (2008) \cite{bogdan2008comparison} and
Datta and Ghosh (2013) \cite{datta2013asymptotic}. It may be noted that when $\mathbf{X}=\mathbf{I}_p$ and $m_g=1$, for $g=1,2,\cdots,G$, \eqref{eq:2.10} boils down to \eqref{eq:2.9}.
\newline
\hspace*{0.5cm} Let $E(1-\kappa_g\mid{\widehat{\tau}}^{\text{EB}},\sigma^2,\mathcal{D})$ denote the posterior shrinkage weight corresponding to the $g^{\text{th}}$ group evaluated at $\tau={\widehat{\tau}}^{\text{EB}}$. Using this empirical Bayes estimate, our proposed data-adaptive decision rule is given by
 \begin{equation*} \label{eq:2.11}
\text{The }	g^{th} \text{group is considered active if } E(1-\kappa_g\mid {\widehat{\tau}}^{\text{EB}},\sigma^2,\mathcal{D}) >0.5, \textrm{ for } g=1,2,\cdots,G,\tag{2.10}
 \end{equation*}
 and the corresponding empirical Bayes half-thresholding(HT) estimator of $\boldsymbol{\beta}_g$, denoted ${\widehat{\boldsymbol{\beta}}_{g, EB}}^{\text{HT}}$, is given by
\begin{equation*}
	\label{eq:2.12}{\widehat{\boldsymbol{\beta}}_{g, EB}}^{\text{HT}}={\widehat{\boldsymbol{\beta}}_g}^{\text{PM}}I\big\{E(1-\kappa_g\mid {\widehat{\tau}}^{\text{EB}},\sigma^2,\mathcal{D}) >0.5\big\}.\tag{2.11}
\end{equation*}
\hspace*{0.5cm}	
An alternative approach to the above empirical Bayes procedure is to assign a non-degenerate joint prior density to $(\tau,\sigma)$. In line with the recommendation of Polson and Scott (2010) \cite{polson2010shrink}, for a fully Bayesian approach, we assume
$\pi(\sigma^2) \propto \frac{1}{\sigma^2}$ and $\pi( \tau)$ is the restriction of half Cauchy prior on a suitably chosen small interval near zero (details and motivations of which are available in Section \ref{sec-3.2.3}).
The full Bayesian half-thresholding (HT) decision rule is given by
  \begin{equation*} \label{eq:2.14}
 \text{The }	g^{th} \text{group is considered active if } E(1-\kappa_g\mid\mathcal{D}) >0.5, \textrm{ for } g=1,2,\cdots,G,\tag{2.12}
 \end{equation*}
  and the corresponding full Bayes half-thresholding(HT) estimator of $\boldsymbol{\beta}_g$, denoted ${\widehat{\boldsymbol{\beta}}_{g, FB}}^{\text{HT}}$, is given by
\begin{equation*}
	\label{eq:2.15}{\widehat{\boldsymbol{\beta}}_{g, FB}}^{\text{HT}}={\widehat{\boldsymbol{\beta}}_g}^{\text{PM}}I\big\{E(1-\kappa_g\mid\mathcal{D}) >0.5\big\}. \tag{2.13}
\end{equation*}
\hspace*{0.5cm}	
For the implementation of these decision rules, one needs to sample from the posterior distribution of relevant parameters. These are explained in the next subsection.
\subsection{Gibbs Sampling}
\label{sec-2.4}
Within the hierarchical form \eqref{eq:2.1}, and using the prior distributions on $\tau$ and $\sigma^2$ stated before, the Gibbs samples are drawn from the full conditional distributions as follows:
\subsubsection*{(1) Sampling from the Posterior Distribution of $\boldsymbol{\beta}_g$:}
Since, the full posterior distribution of $\boldsymbol{\beta}$ given $(\boldsymbol{\lambda}^2,\sigma^2, \tau^2,\mathcal{D})$ is
\begin{equation*}
	\pi(\boldsymbol{\beta}\mid\boldsymbol{\lambda}^2,\sigma^2, \tau^2,\mathcal{D}) \propto \exp\bigg[-\frac{1}{2 \sigma^2}\left(\boldsymbol{\beta}^{\mathrm{T}}{\mathbf{X}}^{\mathrm{T}}\mathbf{X}\boldsymbol{\beta}-2\boldsymbol{\beta}^{\mathrm{T}}{\mathbf{X}}^{\mathrm{T}}\mathbf{y}+\sum_{g=1}^{G} \frac{\boldsymbol{\beta}_g^{\mathrm{T}}{\mathbf{X}}^{\mathrm{T}}_g\mathbf{X}_g\boldsymbol{\beta}_g}{\lambda^2_g\tau^2}\right)\bigg]
\end{equation*}
we obtain for $g=1,2,\cdots,G$,
\begin{equation*}
\pi(\boldsymbol{\beta}_g\mid\boldsymbol{\beta}_{-g},\boldsymbol{\lambda}^2,\sigma^2, \tau^2,\mathcal{D}) \propto \exp\bigg[-\frac{1}{2 \sigma^2}\left(\boldsymbol{\beta}_g^{\mathrm{T}}{\mathbf{X}}^{\mathrm{T}}_g\mathbf{X}_g\boldsymbol{\beta}_g-2\boldsymbol{\beta}_g^{\mathrm{T}}{\mathbf{X}}^{\mathrm{T}}_g\mathbf{y}+\frac{\boldsymbol{\beta}_g^{\mathrm{T}}{\mathbf{X}}^{\mathrm{T}}_g\mathbf{X}_g\boldsymbol{\beta}_g}{\lambda^2_g\tau^2}+\sum_{g'(\neq g)=1}^{G} \boldsymbol{\beta}_g^{\mathrm{T}}{\mathbf{X}}^{\mathrm{T}}_g\mathbf{X}_{-g}\boldsymbol{\beta}_{-g} \right)\bigg],
\end{equation*}
 This is equivalent to saying for $g=1,2,\cdots,G$,
\begin{equation*}
\boldsymbol{\beta}_g \mid (\boldsymbol{\beta}_{-g},\boldsymbol{\lambda}^2,\sigma^2, \tau^2,\mathcal{D}) \sim \mathcal{N}_{m_g}(\boldsymbol{\mu}_g,\sigma^2\boldsymbol{\Sigma}_g),
\end{equation*}
with $\boldsymbol{\mu}_g=(1+\frac{1}{\lambda^2_g\tau^2})^{-1}({\mathbf{X}}^{\mathrm{T}}_g\mathbf{X}_g)^{-1}({\mathbf{X}}^{\mathrm{T}}_g\mathbf{y}-\sum_{g'(\neq g)=1}^{G} {\mathbf{X}}^{\mathrm{T}}_g\mathbf{X}_{-g}\boldsymbol{\beta}_{-g})$ and \\ $\boldsymbol{\Sigma}_g=(1+\frac{1}{\lambda^2_g\tau^2})^{-1}({\mathbf{X}}^{\mathrm{T}}_g\mathbf{X}_g)^{-1}=(1-\kappa_g)({\mathbf{X}}^{\mathrm{T}}_g\mathbf{X}_g)^{-1}$.\\
With the additional assumption on the block-orthogonality of the design matrix $\mathbf{X}$, 
we have for $g=1,2,\cdots,G$,
\begin{equation*}
\boldsymbol{\beta}_g \mid (\boldsymbol{\lambda}^2,\sigma^2, \tau^2,\mathcal{D}) \simind \mathcal{N}_{m_g}(\boldsymbol{\mu}_g,\sigma^2\boldsymbol{\Sigma}_g),
\end{equation*}
 with $\boldsymbol{\mu}_g=(1-\kappa_g)\widehat{\boldsymbol{\beta}}_g$ and $\boldsymbol{\Sigma}_g=(1-\kappa_g)({\mathbf{X}}^{\mathrm{T}}_g\mathbf{X}_g)^{-1}$.
\subsubsection*{(2) Sampling from the Posterior Distribution of $\sigma^2$:}
The full posterior distribution of $\sigma^2$ conditioned on $(\boldsymbol{\beta},\boldsymbol{\lambda}^2,\tau^2,\mathcal{D})$ is given by
\begin{equation*}
	\pi(\sigma^2\mid \boldsymbol{\beta},\boldsymbol{\lambda}^2,\tau^2,\mathcal{D}) \propto (\sigma^2)^{-(\frac{n}{2}+\sum_{g=1}^{G}\frac{m_g}{2}+1)}\times \exp \bigg[-\frac{1}{\sigma^2}\bigg\{\frac{(\mathbf{y}-\mathbf{X}\boldsymbol{\beta})^{\mathrm{T}}(\mathbf{y}-\mathbf{X}\boldsymbol{\beta})}{2}+\sum_{g=1}^{G}\frac{\boldsymbol{\beta}_g^{\mathrm{T}}{\mathbf{X}}^{\mathrm{T}}_g\mathbf{X}_g\boldsymbol{\beta}_g}{2\lambda^2_g\tau^2}\bigg\}\bigg].
\end{equation*}
Hence,
\begin{equation*} \sigma^2\mid(\boldsymbol{\beta},\boldsymbol{\lambda}^2,\tau^2,\mathcal{D}) \sim \text{Inverse Gamma}\left(\frac{n}{2}+\sum_{g=1}^{G}\frac{m_g}{2},\frac{(\mathbf{y}-\mathbf{X}\boldsymbol{\beta})^{\mathrm{T}}(\mathbf{y}-\mathbf{X}\boldsymbol{\beta})}{2}+\sum_{g=1}^{G}\frac{\boldsymbol{\beta}_g^{\mathrm{T}}{\mathbf{X}}^{\mathrm{T}}_g\mathbf{X}_g\boldsymbol{\beta}_g}{2\lambda^2_g\tau^2}\right).
\end{equation*}

\subsubsection*{(3) Sampling from the Posterior Distribution of $\lambda^2_g$}
Observe that, for each $g=1, 2,\cdots, G$,
\begin{equation*}
\pi(\lambda^2_g\mid\boldsymbol{\beta}_g,\sigma^2, \tau^2,\mathcal{D}) \propto (\lambda^2_g)^{-\frac{(m_g+1)}{2}}(1+\lambda^2_g)^{-1}\times \exp\bigg[-\frac{1}{\lambda^2_g}\cdot \frac{\boldsymbol{\beta}_g^{\mathrm{T}}{\mathbf{X}}^{\mathrm{T}}_g\mathbf{X}_g\boldsymbol{\beta}_g}{2\tau^2\sigma^2}\bigg]
\end{equation*}
Using the Slice-sampling approach of Damlen et al.(1999) \cite{damlen1999gibbs}, posterior sampling is done in two steps:

\begin{enumerate}
\item Given $\lambda^2_g$, sample $u_g$ from the Uniform distribution supported over the interval $(0,1+\lambda^2_g)$.
\item For given $(\boldsymbol{\beta}_g,\sigma^2, \tau^2,\mathcal{D})$, sample $\lambda^2_g$ from an inverse-gamma distribution with parameters $\frac{(m_g-1)}{2}$ and $\frac{\boldsymbol{\beta}_g^{\mathrm{T}}{\mathbf{X}}^{\mathrm{T}}_g\mathbf{X}_g\boldsymbol{\beta}_g}{2\tau^2\sigma^2}$, truncated over the interval $(0,\frac{1-u_g}{u_g})$.
\end{enumerate}
\subsubsection*{(4) Sampling from the Posterior Distribution of $\tau^2$}
\begin{equation*}
	\pi(\tau^2|\boldsymbol{\beta},\sigma^2,\boldsymbol{\lambda}^2,\mathcal{D}) \propto \frac{1}{1+\tau^2}\times (\tau^2)^{-\frac{p}{2}}\exp\left(-\frac{1}{\tau^2}\sum_{g=1}^{G}\frac{\boldsymbol{\beta}_g^{\mathrm{T}}{\mathbf{X}}^{\mathrm{T}}_g\mathbf{X}_g\boldsymbol{\beta}_g}{2\lambda^2_g\sigma^2}\right).
\end{equation*}
Again, using the Slice-sampling approach of Damlen et al.(1999) \cite{damlen1999gibbs}, samples are drawn from the above posterior distribution of $\tau^2$ as follows:
\begin{enumerate}
\item Given $\tau^2$, sample $u$ from the Uniform distribution supported over the interval $(0,1+\tau^2)$.
\item Given $(\boldsymbol{\beta},\sigma^2,\boldsymbol{\lambda}^2,\mathcal{D})$, sample $\tau^2$ from an inverse-gamma distribution with parameters $\frac{(p-2)}{2}$ and $\sum_{g=1}^{G}\frac{\boldsymbol{\beta}_g^{\mathrm{T}}{\mathbf{X}}^{\mathrm{T}}_g\mathbf{X}_g\boldsymbol{\beta}_g}{2\lambda^2_g\sigma^2}$, truncated to have zero probability outside the interval $(0,\frac{1-u}{u})$.
\end{enumerate}

\section{Main Theoretical results}
\label{sec-3}
In this section, we present our theoretical results concerning asymptotic properties of estimation of the group coefficients and variable selection using the proposed Half-thresholding (HT) rule.
Following the works of Fan and Li (2001) \cite{fan2001variable} and Zou (2006) \cite{zou2006adaptive}, our aim here is to establish that the proposed half-thresholding methods defined in \eqref{eq:2.7}, \eqref{eq:2.11} and \eqref{eq:2.14} attain the oracle properties(defined below) asymptotically as the number of observations $n$ grows to infinity. Let $\mathcal{A}=\{g:\boldsymbol{\beta}_{g}^{0} \neq \mathbf{0}\}$ and $\mathcal{A}_n=\{g:\widehat{\boldsymbol{\beta}}_g^{\text{HT}} \neq \mathbf{0}\}$ denote respectively the set of true active groups and the groups declared active by our half-thresholding rule. The aforesaid articles defined a procedure $\delta$ to be an oracle if the resultant procedure can identify the true model asymptotically and the estimator 
corresponding to that procedure $\widehat{\boldsymbol{\beta}}(\delta)$ can achieve the optimal rate of estimation simultaneously. The exact forms of these expressions in our context will be discussed later.
As mentioned before, for studying the oracle properties of the thresholding rules \eqref{eq:2.7}, \eqref{eq:2.11}, and \eqref{eq:2.14}, we treat the global shrinkage component $\tau$ either as a tuning parameter to be chosen appropriately when the number of active groups is assumed to be known or it is replaced either by an empirical Bayes estimator as given in \eqref{eq:2.10} or is modeled by a non-degenerate prior in case the number of active groups is unknown. In both cases, however, the error variance term $\sigma^2$ is assumed to be known and does not vary with $n$. \\ 
\hspace*{0.5cm} Since our proposed half-thresholding (HT) rules crucially hinge upon the posterior shrinkage coefficients, for the sake of completeness, we describe below the posterior distribution of $\kappa_g$ given by
\begin{equation*}\label{eq:3.1}
	\pi(\kappa_g\mid\tau_n,\sigma^2,\mathcal{D})	 \propto \kappa_g^{(a+\frac{m_g}{2}-1)}(1-\kappa_g)^{-a-1}L\left(\frac{1}{\tau^2_n}(\frac{1}{\kappa_g}-1)\right)\exp\left(-\kappa_g\cdot \frac{n\widehat{\boldsymbol{\beta}}_g^{\mathrm{T}}\mathbf{Q}_{n,g}\widehat{\boldsymbol{\beta}}_g }{2 \sigma^2}\right), 0<\kappa_g<1, \tag{3.1}
\end{equation*}
where $\mathbf{Q}_{n,g}=\frac{{\mathbf{X}}^{\mathrm{T}}_g\mathbf{X}_g}{n}$ and $g=1,2,\cdots,G$. Note that, since the error variance $\sigma^2$ is assumed to be known, the above posterior distribution of $\kappa_g$ depends only on $\tau_n$ and the data $\mathcal{D}$ We repeatedly make careful exploitation of this last observation to establish the oracle properties of the half-thresholding rules proposed in this paper. On the other hand, when the global shrinkage parameter $\tau$ is replaced with an empirical Bayes estimator $\widehat{\tau}^{EB}$ or a prior is assigned to it, the posterior distribution of $\kappa_g$ depends on the entire dataset $\mathcal{D}$ which makes the theoretical derivations significantly different, and technically more challenging.

\subsection{Oracle properties of the HT procedure when $\tau$ is known}
\label{sec-3.1}

In this sub-section, we treat the \textit{global shrinkage} parameter $\tau$ as a tuning parameter. Propositions \ref{prop-1} and \ref{prop-2} stated in subsection \ref{sec-2.3} indicate that the half-thresholding rule of the form \eqref{eq:2.7} correctly identifies the individual groups as active or inactive. Theorem \ref{Thm-1} below ensures the same for the overall group selection problem when the sample size $n$ grows to infinity. Hence the proposed half-thresholding rule defined in \eqref{eq:2.7} enjoys model selection consistency.  Proof of this result is presented in Section \ref{sec-5}.

\begin{theorem}[Variable Selection Consistency] \label{Thm-1} Consider the hierarchical framework of \eqref{eq:2.1} where $\pi(\lambda^2_g)$ is as in \eqref{eq:2.2} and the half-thresholding (HT) rule \eqref{eq:2.7} based on these. Let $\mathcal{A}_n=\{g:\boldsymbol{\beta}_{g}^{0} \neq \mathbf{0}\}$ and $\widehat{\mathcal{A}}_n=\{g:\widehat{\boldsymbol{\beta}}_g^{\text{HT}} \neq \mathbf{0}\}$ denote respectively the set of truly active groups, and the set of groups declared active by the half-thresholding rule \eqref{eq:2.7}. Let $\mathbf{Q}_{n,g}={\mathbf{X}}^{\mathrm{T}}_g\mathbf{X}_g/n$, for $g=1,\cdots, G$ and $r_n=\frac{G_{\mathcal{A}_n}}{G_n}$.
\newline

Consider the following assumptions along with \ref{assmp-1}-\ref{assmp-3}:
	\begin{enumerate}[label=(B\arabic*)]
    \item \label{assmp-4} the total number of active groups $|\mathcal{A}|=G_{\mathcal{A}_n}$ is known and satisfies $G^{\epsilon_1}_n \lesssim G_{\mathcal{A}_n} \lesssim G^{\epsilon_2}_n$ for some $0\leq \epsilon_1 <\epsilon_2<1$, and
    
    \item \label{assmp-5} the global parameter $\tau_n \to 0$ as $n \to \infty$ such that $r^{\frac{1+\delta_2}{1-\epsilon_2}}_n \lesssim \tau_n \lesssim r^{\frac{1+\delta_1}{1-\epsilon_1}}_n$ for some $\delta_2 >\delta_1>\frac{\epsilon_2-\epsilon_1}{1-\epsilon_2}$.
\end{enumerate}
Suppose further that $L(\cdot)$ in \eqref{eq:2.2} satisfies \hyperlink{assumption1}{Assumption 1}. 	 Then, under assumptions \ref{assmp-1}-\ref{assmp-3} and \ref{assmp-4}, \ref{assmp-5}, our half-thresholding rule \eqref{eq:2.7} results in variable selection consistency, i.e.,
we have, 
	\begin{equation*} 
		\lim_{n \to \infty} P(\mathcal{A}_n=\widehat{\mathcal{A}}_n)=1 \hspace*{0.2cm}\text{as} \hspace*{0.2cm} n \to \infty .
	\end{equation*}
\end{theorem}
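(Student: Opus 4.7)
The plan is to promote the per-group convergence statements given in Propositions \ref{prop-1} and \ref{prop-2} to a uniform statement over all $G_n$ groups via a union bound. Writing
\begin{equation*}
P(\widehat{\mathcal{A}}_n \neq \mathcal{A}_n) \le \sum_{g \in \mathcal{A}_n} P\bigl(E(1-\kappa_g \mid \tau_n, \sigma^2, \mathcal{D}) \le 0.5\bigr) + \sum_{g \notin \mathcal{A}_n} P\bigl(E(1-\kappa_g \mid \tau_n, \sigma^2, \mathcal{D}) > 0.5\bigr),
\end{equation*}
the task reduces to producing per-group misclassification bounds summable over the $G_{\mathcal{A}_n}$ active and the $G_n - G_{\mathcal{A}_n}$ inactive groups. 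Under the block-orthogonal design the $\widehat{\boldsymbol{\beta}}_g$ are independent, and by inspection of \eqref{eq:3.1} the posterior shrinkage weight depends on the data only through the scalar $T_g := n\widehat{\boldsymbol{\beta}}_g^{\mathrm{T}}\mathbf{Q}_{n,g}\widehat{\boldsymbol{\beta}}_g/\sigma^2$, and is monotone increasing in $T_g$ (since larger $T_g$ in \eqref{eq:3.1} pushes the posterior mass of $\kappa_g$ toward $0$). Consequently the HT rule \eqref{eq:2.7} is equivalent to thresholding $T_g$ at a deterministic critical value $t_n^\star$ defined by $E(1-\kappa_g\mid\tau_n,\sigma^2,T_g = t_n^\star) = 0.5$.

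The first analytic step is to quantify $t_n^\star$. By splitting the integrals
\begin{equation*}
E(1-\kappa_g\mid\tau_n,\sigma^2,T_g) = \frac{\int_0^1 (1-\kappa)\,\kappa^{a+m_g/2-1}(1-\kappa)^{-a-1}L\bigl(\tau_n^{-2}(\kappa^{-1}-1)\bigr)e^{-\kappa T_g/2}\,d\kappa}{\int_0^1 \kappa^{a+m_g/2-1}(1-\kappa)^{-a-1}L\bigl(\tau_n^{-2}(\kappa^{-1}-1)\bigr)e^{-\kappa T_g/2}\,d\kappa}
\end{equation*}
near $\kappa = 0$ and $\kappa = 1$, changing variables via $u = \tau_n^{-2}(\kappa^{-1}-1)$, and extracting the scale $\tau_n^{2a}L(\tau_n^{-2})$ via Karamata's theorem together with the two-sided bounds on $L$ from Assumption 1, the numerator and denominator can be matched against elementary Beta-type integrals. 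This will show $t_n^\star \asymp \log\bigl(1/\{\tau_n^{2a}L(\tau_n^{-2})\}\bigr)$, and combined with \ref{assmp-5} and the slow variation of $L$ this gives $t_n^\star \asymp \log G_n$, with leading constant tunable through $\epsilon_1, \epsilon_2, \delta_1, \delta_2$.

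The two error types are then controlled by tail bounds on $T_g$. For an inactive group, block-orthogonality gives $\widehat{\boldsymbol{\beta}}_g = (\mathbf{X}_g^{\mathrm{T}}\mathbf{X}_g)^{-1}\mathbf{X}_g^{\mathrm{T}}\boldsymbol{\epsilon}$, so $T_g \sim \chi^2_{m_g}$ with $m_g \le s$ by Assumption \ref{assmp-3}; a standard $\chi^2$ tail bound then yields $P(T_g > t_n^\star) \lesssim G_n^{-c}$ with $c$ as large as needed once the constants in \ref{assmp-5} are chosen appropriately, and summing over at most $G_n$ null groups gives $G_n^{1-c} \to 0$. For an active group, Assumptions \ref{assmp-1}--\ref{assmp-2} imply a non-centrality of order $n\boldsymbol{\beta}_g^{0\,\mathrm{T}}\mathbf{Q}_{n,g}\boldsymbol{\beta}_g^0/\sigma^2 \ge C_1 n\|\boldsymbol{\beta}_g^0\|_2^2/\sigma^2 \gtrsim n^{1-2b}$, which dominates $t_n^\star \lesssim \log n$; concentration for non-central chi-squared then gives $P(T_g \le t_n^\star) = o(n^{-\alpha})$ for any fixed $\alpha$, and summing over $G_{\mathcal{A}_n} \lesssim G_n^{\epsilon_2} \le n^{\epsilon_2}$ active groups still leaves $o(1)$. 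The two contributions together deliver $P(\widehat{\mathcal{A}}_n \neq \mathcal{A}_n) \to 0$.

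The main obstacle is the first analytic step --- producing sharp two-sided asymptotics for $t_n^\star$ that are uniform in $m_g \le s$ and precise enough to calibrate the $\chi^2$ tail against $\log G_n$. One must handle the non-integrable singularity of $(1-\kappa)^{-a-1}$ at $\kappa = 1$, track the slowly varying $L$ across the change of variable cleanly enough not to lose constants, and combine the resulting algebraic and exponential factors to pin down not merely the order but the effective leading constant of $t_n^\star$. Once this threshold calculation is in hand, the rest of the argument is a bookkeeping exercise in balancing $\epsilon_1, \epsilon_2, \delta_1, \delta_2$, which explains the particular form of the scaling $r_n^{(1+\delta_2)/(1-\epsilon_2)} \lesssim \tau_n \lesssim r_n^{(1+\delta_1)/(1-\epsilon_1)}$ imposed in \ref{assmp-5}: the upper bound on $\tau_n$ forces the inactive-group tail to be summable, while the lower bound prevents the active-group non-centrality from being swamped by the threshold.
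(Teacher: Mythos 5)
Your skeleton is essentially the paper's: the same union-bound decomposition, the same reduction of the half-threshold event to a condition on the scalar $T_g=n\widehat{\boldsymbol{\beta}}_g^{\mathrm{T}}\mathbf{Q}_{n,g}\widehat{\boldsymbol{\beta}}_g/\sigma^2$, a central $\chi^2_{m_g}$ tail (cf.\ Lemma \ref{lem5}) calibrated against a threshold of order $\log\{1/(\tau_n^{2a}L(\tau_n^{-2}))\}\asymp\log G_n$ for the null groups, and a polynomially large non-centrality $\gtrsim n^{1-2b}$ swamping that threshold for the active groups, with \ref{assmp-4}--\ref{assmp-5} entering exactly as you say to make $G_n\tau_n^{2a}L(\tau_n^{-2})[\log(\cdot)]^{s/2-1}\to 0$. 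The one genuine difference is packaging: you use monotonicity of $E(1-\kappa_g\mid T_g)$ in $T_g$ (correct, by the MLR/stochastic-ordering argument applied to \eqref{eq:3.1}) to posit a single crossing point $t_n^\star$ and then declare that sharp two-sided asymptotics for $t_n^\star$, with its effective leading constant uniform in $m_g\le s$, are the main obstacle. The paper never solves that problem; it replaces it by two one-sided bounds, which is all the argument needs: Lemma \ref{lem3} bounds $E(1-\kappa_g\mid\tau_n,\sigma^2,\mathcal{D})$ above by (a constant times) $\tau_n^{2a}L(\tau_n^{-2})e^{T_g/2}$, so the event $\{E(1-\kappa_g\mid\cdot)>1/2\}$ forces $T_g\ge 2\log\{c/(\tau_n^{2a}L(\tau_n^{-2}))\}$ (this is the only place a constant matters, and it is the $2$ coming from the $e^{T_g/2}$ factor); Lemma \ref{lem4} bounds $P(\kappa_g>\eta\mid\cdot)$ by $C\tau_n^{-2a}e^{-\eta(1-q)T_g/2}$, so on the active side the event $\{E(\kappa_g\mid\cdot)\ge 1/2\}$ forces $T_g\lesssim\log(1/\tau_n)$ with an unspecified constant, which suffices because $\log(1/\tau_n)=o(nm_n^2)$. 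So you can keep your $t_n^\star$ formulation, but you only need a lower bound on $t_n^\star$ with the constant $2$ in front of $\log\{1/(\tau_n^{2a}L)\}$ and an upper bound of order $\log(1/\tau_n)$; the feared uniform sharp constant from both sides is not required, and the Karamata/Assumption~1 manipulations you name are exactly how the paper's Lemmas \ref{lem2}--\ref{lem4} produce these bounds.

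Two minor miscalibrations in your write-up: the leading constant of $t_n^\star$ is dictated by the prior exponent $a$ (through $\tau_n^{2a}$ and the $e^{T_g/2}$ factor), not ``tunable through $\epsilon_1,\epsilon_2,\delta_1,\delta_2$''; those parameters only control how $\log(1/\tau_n)$ compares with $\log G_n$. Likewise the null-group tail is not $G_n^{-c}$ with ``$c$ as large as needed'' --- under \ref{assmp-5} you get a fixed exponent strictly larger than $1$ (that is precisely what $\delta_1>\frac{\epsilon_2-\epsilon_1}{1-\epsilon_2}$ buys), which together with the polylogarithmic factor is exactly enough for the sum over the $\le G_n$ null groups to vanish, and no more. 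Also note the paper splits the null-side bound into the regimes $\frac12\le a<1$ and $a\ge1$ (the second bound in Lemma \ref{lem3}), which in your formulation corresponds to the scale being $\tau_n^{2a}L(\tau_n^{-2})$ versus $\tau_n$; this is a bookkeeping point your sketch subsumes but should make explicit when executing the threshold estimate.
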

\begin{remark}
 Observe that, asserting $\lim_{n \to \infty} P(\mathcal{A}_n=\widehat{\mathcal{A}}_n)=1 \hspace*{0.2cm}\text{as} \hspace*{0.2cm} n \to \infty $ is equivalent to saying \begin{equation*} 
		\lim_{n \to \infty} P(\mathcal{A}_n \neq \widehat{\mathcal{A}}_n)=0 \hspace*{0.2cm}\text{as} \hspace*{0.2cm} n \to \infty 
	\end{equation*}
which is what we establish to prove Theorem \ref{Thm-1}. This is proved by showing 
\begin{equation*} \label{eq:3.2}
    \sum_{g \in \mathcal{A}} P(E(1-\kappa_g\mid\tau_n,\sigma^2,\mathcal{D})<\frac{1}{2}) \to 0, \textrm{ as } n \to \infty, \tag{3.2}
\end{equation*}
and 
\begin{equation*} \label{eq:3.3}
    \sum_{g \notin \mathcal{A}} P(E(1-\kappa_g\mid\tau_n,\sigma^2,\mathcal{D})>\frac{1}{2}) \to 0, \textrm{ as } n \to \infty. \tag{3.3}
\end{equation*}
Thus, not only do both the probabilities of type-I error and type-II error tend to $0$ as $n \to \infty$ individually, but their corresponding sums also tend to $0$ as $n \to \infty$. 
\end{remark}
\begin{remark}
\label{remark-4}
Condition \ref{assmp-1} is very natural in variable selection problems. Johnson and Rossell (2012) \cite{johnson2012bayesian} and Armagan et al. (2013) \cite{armagan2013posterior} assumed the same condition on the eigenvalues of $({\mathbf{X}}^{\mathrm{T}}\mathbf{X})/n$ while studying the posterior contraction rates in high-dimensional regression problems. Under \ref{assmp-4}, a condition similar to \ref{assmp-5} of Theorem \ref{Thm-1} was considered in Tang et al. (2018) \cite{tang2018bayesian}. But the main difference lies in the assumption regarding the design matrix $\mathbf{X}$ and the cardinality of $|\mathcal{A}|$ while proving the result. In their work, Tang et al. (2018) \cite{tang2018bayesian} assumed the corresponding design matrix $\mathbf{X}$ to be orthogonal (i.e. ${\mathbf{X}}^{\mathrm{T}}\mathbf{X}=n\mathbf{I}_p$) and the number of active variables is independent of $n$, which restricts the applicability of their result. For an orthogonal design, assumption \ref{assmp-1} is trivially satisfied. Further, assuming $|\mathcal{A}|$ being fixed (that is, independent of $n$) implies that assumption \ref{assmp-2} is not required at all. To deal with a more general scenario, we have allowed the total number of active groups to vary with $n$. Therefore, in several aspects, Theorem \ref{Thm-1} of the present article is a generalization of their work.
Condition \ref{assmp-2} has been used by Zhao and Yu (2006) \cite{zhao2006model} while studying the sign consistency of LASSO. Recently, Zhang and Xiang (2016) \cite{zhang2016oracle} and Wang and Tian (2019) \cite{wang2019adaptive} assumed this condition in the context of selection consistency of adaptive group LASSO in high-dimensional linear models. The assumption on the finiteness of the group size, \ref{assmp-3} is also frequently used in group selection problems. See the works of Xu and Ghosh (2015) \cite{xu2015bayesian} and Yang and Narisetty (2020) \cite{yang2020consistent} in this context. Since, we assume $G_n \leq n$, \ref{assmp-3} indicates that the number of groups $G_n$ can be assumed of the order $n$. 
Note that \ref{assmp-4} and \ref{assmp-5} suggest a wide range of choices for $\tau_n$ to achieve selection consistency. A possible choice for $\tau_n$ is $\tau_n \asymp G_n^{-1-\delta}, \delta >0$. In the next paragraph, we mention very briefly the key steps for proving Theorem \ref{Thm-1}.\\
\end{remark}
 

\hspace*{0.5cm} To establish \eqref{eq:3.2} and \eqref{eq:3.3}, we first need to obtain some concentration inequalities based on the posterior distribution of $\kappa_g$ and the posterior mean of $1-\kappa_g, g=1,2,\cdots, G$. These are provided as lemmas \ref{lem1}-\ref{lem4} in Section \ref{sec-5}. Next, we are required to find upper bounds for the tail probabilities of non-central and central $\chi^2$ random variables. For appropriate non-central $\chi^2$, assumptions \ref{assmp-1}-\ref{assmp-4}, and Mill's ratio come in handy. On the other hand, tail bounds for central $\chi^2$ random variables (see Lemma \ref{lem5}) 
due to Gabcke (2015) \cite{gabcke2015} help us complete the proof of our result. \\
\hspace*{0.5cm} The following theorem, namely, Theorem \ref{Thm-2}, establishes the fact that the half-thresholding rule in \eqref{eq:2.7} achieves optimal estimation rate under mild conditions. Proof of this result is deferred to Section \ref{sec-5}.
\begin{theorem}
    	\label{Thm-2}
     Consider the hierarchical framework of \eqref{eq:2.1} satisfying \eqref{eq:2.2}, and the half-thresholding (HT) rule \eqref{eq:2.7} based on this.  Let $\boldsymbol{\beta}_{\mathcal{A}}^{0}=\{\boldsymbol{\beta}_{g}^{0}: g \in \mathcal{A}\}$ and $\widehat{\boldsymbol{\beta}}_{\mathcal{A}}^{\text{HT}} =\{ \widehat{\boldsymbol{\beta}}_g^{\text{HT}} : g \in \mathcal{A}\}$. Consider the following assumptions:
     \begin{enumerate}[label=(C\arabic*)]
    \item\label{assmpan-1} There exist global constants $C_1>0$, $C_2>0$ with $0<C_1\leq C_2<\infty$ such that $ 0<C_1 \leq \frac{1}{n} e_{min} ({\mathbf{X}}^{\mathrm{T}} \mathbf{X}) \leq \frac{1}{n} e_{max} ({\mathbf{X}}^{\mathrm{T}} \mathbf{X})  \leq C_2 <\infty$.
    \item \label{assmpan-2} $\hspace{0.1cm} \textrm{For all } g \in \mathcal{A}, \min_{j} |\beta^{0}_{gj}| > C_3 $, for some global constant $C_3>0$.
     \item \label{assmpan-3} The global parameter $\tau_n \to 0$ such that $G_n \sqrt{\tau_n} \log (\frac{1}{\tau_n}) \to \infty$ and $\sqrt{\tau_n} \log (\frac{1}{\tau_n})=o(\frac{1}{|\mathcal{A}|})$  as $n \to \infty$.

\end{enumerate}
Suppose further that $L(\cdot)$ in \eqref{eq:2.2} satisfies \hyperlink{assumption1}{Assumption 1}. 
	 Then, under assumptions $\ref{assmpan-1}- \ref{assmpan-3}$, the resultant estimator corresponding to \eqref{eq:2.7} enjoys optimal estimation rate, i.e.,
     for any vector $\boldsymbol{\alpha}$ with $|| \boldsymbol{\alpha}||=1$ and $\boldsymbol{\Sigma}_{\mathcal{A}}={\mathbf{X}^{\mathrm{T}}_{\mathcal{A}}}{\mathbf{X}_{\mathcal{A}}}$, we have 
     \begin{align*}
         {\boldsymbol{\alpha}}^{\mathrm{T}} {\boldsymbol{\Sigma}^{\frac{1}{2}}_{\mathcal{A}}}(\widehat{\boldsymbol{\beta}}_{\mathcal{A}}^{\text{HT}}-\boldsymbol{\beta}_{\mathcal{A}}^0) \xrightarrow{d} \mathcal{N}({0}, \sigma^2), \textrm{ as } n \to \infty.
     \end{align*}

\end{theorem}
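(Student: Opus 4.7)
The proof proceeds by a Slutsky-type decomposition: write the quantity of interest as an OLS piece plus a remainder, exploit block orthogonality to show the OLS piece is exactly $\mathcal{N}(0,\sigma^2)$, and then show the remainder is $o_P(1)$ by combining variable selection consistency (Theorem \ref{Thm-1}) with a sharp posterior bound on $E(\kappa_g\mid\tau_n,\sigma^2,\mathcal{D})$ at active groups.

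\textbf{Step 1 (decomposition).} Split
\begin{equation*}
\boldsymbol{\alpha}^{\mathrm{T}}\boldsymbol{\Sigma}_{\mathcal{A}}^{\frac{1}{2}}(\widehat{\boldsymbol{\beta}}_{\mathcal{A}}^{\mathrm{HT}}-\boldsymbol{\beta}_{\mathcal{A}}^{0})=T_1+T_2,
\end{equation*}
where $T_1:=\boldsymbol{\alpha}^{\mathrm{T}}\boldsymbol{\Sigma}_{\mathcal{A}}^{\frac{1}{2}}(\widehat{\boldsymbol{\beta}}_{\mathcal{A}}-\boldsymbol{\beta}_{\mathcal{A}}^{0})$ is the block-OLS contribution and $T_2:=\boldsymbol{\alpha}^{\mathrm{T}}\boldsymbol{\Sigma}_{\mathcal{A}}^{\frac{1}{2}}(\widehat{\boldsymbol{\beta}}_{\mathcal{A}}^{\mathrm{HT}}-\widehat{\boldsymbol{\beta}}_{\mathcal{A}})$ is the shrinkage-plus-selection remainder.

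\textbf{Step 2 ($T_1\sim\mathcal{N}(0,\sigma^2)$ exactly).} Under the block-orthogonal design that underlies \eqref{eq:2.6} and the HT rule in Section \ref{sec-2.3}, $\mathbf{X}_g^{\mathrm{T}}\mathbf{X}_{g'}=\mathbf{0}$ for $g\neq g'$, so $\widehat{\boldsymbol{\beta}}_g-\boldsymbol{\beta}_g^{0}=(\mathbf{X}_g^{\mathrm{T}}\mathbf{X}_g)^{-1}\mathbf{X}_g^{\mathrm{T}}\boldsymbol{\epsilon}$ and the blocks are independent centered Gaussians. Stacking over $g\in\mathcal{A}$ yields $\widehat{\boldsymbol{\beta}}_{\mathcal{A}}-\boldsymbol{\beta}_{\mathcal{A}}^{0}\sim\mathcal{N}(\mathbf{0},\sigma^2\boldsymbol{\Sigma}_{\mathcal{A}}^{-1})$ \emph{exactly}, whence $T_1\sim\mathcal{N}(0,\sigma^2||\boldsymbol{\alpha}||^2)=\mathcal{N}(0,\sigma^2)$ for every $n$. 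No CLT is invoked.

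\textbf{Step 3 ($T_2=o_P(1)$).} Let $\mathbf{v}=\boldsymbol{\Sigma}_{\mathcal{A}}^{\frac{1}{2}}\boldsymbol{\alpha}$ with blocks $\mathbf{v}_g$; by \ref{assmpan-1}, $||\mathbf{v}||^2=\boldsymbol{\alpha}^{\mathrm{T}}\boldsymbol{\Sigma}_{\mathcal{A}}\boldsymbol{\alpha}\leq e_{\max}(\boldsymbol{\Sigma}_{\mathcal{A}})\leq nC_2$, so $||\mathbf{v}_g||\leq\sqrt{nC_2}$. Set $\mathcal{E}_n=\{\widehat{\boldsymbol{\beta}}_g^{\mathrm{HT}}\neq\mathbf{0}\text{ for every }g\in\mathcal{A}\}$. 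Proposition \ref{prop-2}, whose hypotheses are implied by \ref{assmpan-1}--\ref{assmpan-3} (together with the standing assumption \ref{assmp-3} that the group sizes are uniformly bounded), delivers $P(\mathcal{E}_n^c)\to 0$. On $\mathcal{E}_n$ the HT estimator collapses to the posterior mean, so for $g\in\mathcal{A}$,
\begin{equation*}
\widehat{\boldsymbol{\beta}}_g^{\mathrm{HT}}-\widehat{\boldsymbol{\beta}}_g=-E(\kappa_g\mid\tau_n,\sigma^2,\mathcal{D})\,\widehat{\boldsymbol{\beta}}_g,
\end{equation*}
and hence $|T_2|\,I\{\mathcal{E}_n\}\leq\sum_{g\in\mathcal{A}}||\mathbf{v}_g||\,||\widehat{\boldsymbol{\beta}}_g||\,E(\kappa_g\mid\tau_n,\sigma^2,\mathcal{D})$. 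Since $||\widehat{\boldsymbol{\beta}}_g||=||\boldsymbol{\beta}_g^0||+O_P(n^{-1/2})=O_P(1)$ under \ref{assmpan-2} and $||\mathbf{v}_g||=O(\sqrt{n})$, the remaining task is to establish a quantitative posterior bound of the form
\begin{equation*}
\max_{g\in\mathcal{A}} E(\kappa_g\mid\tau_n,\sigma^2,\mathcal{D})=O_P\!\bigl(n^{-1/2}\sqrt{\tau_n}\log(1/\tau_n)\bigr),
\end{equation*}
or any rate sharp enough that $|\mathcal{A}|\sqrt{n}$ times it is $o_P(1)$. Assumption \ref{assmpan-3} is calibrated so that $|T_2|\,I\{\mathcal{E}_n\}=O_P(|\mathcal{A}|\sqrt{\tau_n}\log(1/\tau_n))=o_P(1)$. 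Slutsky's theorem then gives $T_1+T_2\xrightarrow{d}\mathcal{N}(0,\sigma^2)$.

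\textbf{Main obstacle.} The real work sits in the uniform posterior rate for $E(\kappa_g\mid\cdot)$ on the active set. Qualitatively this is supplied by Proposition \ref{prop-2}, but the rate requires a careful direct analysis of \eqref{eq:3.1}. The key feature to exploit is that under \ref{assmpan-1}--\ref{assmpan-2} the exponent $s_g:=n\widehat{\boldsymbol{\beta}}_g^{\mathrm{T}}\mathbf{Q}_{n,g}\widehat{\boldsymbol{\beta}}_g/(2\sigma^2)$ is of order $n$ with probability tending to one, so the factor $e^{-\kappa_g s_g}$ sharply concentrates the posterior of $\kappa_g$ near zero. Turning this concentration into a quantitative rate involves splitting the $\kappa_g$-integration at a suitable cut $\delta_n$, damping the singularity $(1-\kappa_g)^{-a-1}$ near $\kappa_g=1$ against $e^{-\kappa_g s_g}$, and invoking Karamata's uniform convergence theorem for the slowly varying $L$ together with the two-sided bounds from Assumption 1 to match the leading orders of the numerator and denominator of $E(\kappa_g\mid\cdot)$. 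This is precisely the optimality gap of Tang et al.\ (2018) that the paper undertakes to close, and making the argument uniform over a growing active set is what makes this step the principal technical contribution.
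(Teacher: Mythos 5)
Your Steps 1 and 2 coincide with the paper's proof: the same split $T=T_1+T_2$ with $T_1=\boldsymbol{\alpha}^{\mathrm{T}}\boldsymbol{\Sigma}_{\mathcal{A}}^{\frac{1}{2}}(\widehat{\boldsymbol{\beta}}_{\mathcal{A}}-\boldsymbol{\beta}_{\mathcal{A}}^0)$ exactly $\mathcal{N}(0,\sigma^2)$ by block orthogonality. The problem is Step 3: the one estimate that carries the whole theorem --- a quantitative, uniform-over-$\mathcal{A}$ bound on the posterior shrinkage factor at active groups --- is not proved; you state it as the ``main obstacle'' and sketch what such an analysis of \eqref{eq:3.1} would involve, but this is precisely the step the theorem exists to supply (the acknowledged gap in Tang et al.), so deferring it leaves the proposal incomplete where it matters. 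In the paper this step is carried out as follows: $T_2^2$ is bounded via Cauchy--Schwarz and \ref{assmpan-1} by $\frac{4\sigma^2}{C_1}\sum_{g\in\mathcal{A}}U_{n,g}$ with $U_{n,g}=W_{n,g}\,E(\kappa_g^2\mid\tau_n,\sigma^2,\mathcal{D})$ and $W_{n,g}=n\widehat{\boldsymbol{\beta}}_g^{\mathrm{T}}\mathbf{Q}_{n,g}\widehat{\boldsymbol{\beta}}_g/\sigma^2$; then $U_{n,g}=J(W_{n,g},\tau)$ is dominated by an explicit function $h=h_1+h_2$ (in the spirit of Lemma 3 of Ghosh and Chakrabarti, 2017), with $h_1\propto\bigl[W_{n,g}\int_0^{W_{n,g}/(1+t_0)}e^{-u/2}u^{m_g+a-1}\,du\bigr]^{-1}$ and $h_2\propto W_{n,g}\tau^{-2a}e^{-\eta(1-q)W_{n,g}/2}$; on the event $C_n=\{W_{n,g}>2a\rho/(\sqrt{\tau_n}\log(1/\tau_n))\}$ both suprema are $o(1/|\mathcal{A}|)$ exactly because of \ref{assmpan-3}, so $P(B_n\cap C_n)=0$ for large $n$, while $\sum_{g\in\mathcal{A}}P(C_n^c)\to 0$ follows from the Gaussian tail computation of Proposition \ref{prop-2} under \ref{assmpan-1}--\ref{assmpan-2}. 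Without this (or an equivalent) piece of analysis your argument is a plan, not a proof.

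Two further points on your route. First, the paper never conditions on the selection event $\mathcal{E}_n$: it writes $\sqrt{n}(\widehat{\boldsymbol{\beta}}_g^{\text{HT}}-\widehat{\boldsymbol{\beta}}_g)$ as the sum of $-\sqrt{n}\widehat{\boldsymbol{\beta}}_g E(\kappa_g\mid\cdot)$ and a term supported on $\{E(1-\kappa_g\mid\cdot)\le 0.5\}$, and absorbs the latter using $E(1-\kappa_g\mid\cdot)I\{E(1-\kappa_g\mid\cdot)\le 0.5\}\le E(\kappa_g\mid\cdot)$; this avoids the union bound over a growing active set that your $P(\mathcal{E}_n^c)\to 0$ requires (your appeal to the per-group Proposition \ref{prop-2} is not by itself enough --- you need the summed version \eqref{eq:3.2}, though that is available under these assumptions). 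Second, your bound $\lVert\widehat{\boldsymbol{\beta}}_g\rVert=\lVert\boldsymbol{\beta}_g^0\rVert+O_P(n^{-1/2})=O_P(1)$ silently assumes an upper bound on $\lVert\boldsymbol{\beta}_g^0\rVert$, which \ref{assmpan-2} does not give (it is only a lower bound); the paper's formulation through $W_{n,g}E(\kappa_g^2\mid\cdot)$, with a dominating function that decays in $W_{n,g}$, needs no such upper bound on the true signals.
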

\begin{remark}
    First, note that \ref{assmpan-1} is a slightly stronger assumption than \ref{assmp-1} since it assumes an upper bound on the largest eigenvalue of $\frac{\mathbf{X}^{\mathrm{T}}\mathbf{X}}{n}$. This is
a common assumption in the high-dimensional variable selection literature.
See, e.g., assumption (A1) of Zou and Zhang (2009) \cite{zou2009adaptive}, who assumed exactly the same condition while establishing the oracle property of their proposed adaptive elastic net estimator. Similarly, condition \ref{assmpan-2} is also a stronger version of \ref{assmp-2} as $b=0$ in \ref{assmp-2} corresponds to \ref{assmpan-2}. This assumption is also needed in our argument to establish asymptotic normality. It is interesting to note that, \ref{assmpan-2} is weaker than that of (A6) of Zou and Zhang (2009) \cite{zou2009adaptive} where an upper bound on the rate of growth of $|\beta^0_{j}|$ was also assumed for proving asymptotic normality of adaptive elastic net estimator. Also, observe that \ref{assmp-4}, \ref{assmp-5} (used in Theorem \ref{Thm-1}) and \ref{assmpan-3} provide some choices of $\tau$ for achieving both selection consistency and optimal estimation rate. One such choice of $\tau$ is $\tau \asymp G_n^{-1-\delta}, 0<\delta \leq 1$. This choice is used for an estimator in our simulation study. It also provides an idea about the range of the prior distribution of $\tau$ in the case of a full Bayes procedure when one models the situation using a non-degenerate prior on $\tau$. This is discussed in detail in section \ref{sec-3.2.3}.
\end{remark}

\hspace*{0.5cm} The proof of Theorem \ref{Thm-2} involves establishing two key facts, namely, 
\begin{equation*} \label{eq:3.4}
      {\boldsymbol{\alpha}}^{\mathrm{T}} {\boldsymbol{\Sigma}^{\frac{1}{2}}_{\mathcal{A}}}(\widehat{\boldsymbol{\beta}}_{\mathcal{A}}-\boldsymbol{\beta}_{\mathcal{A}}^0) \xrightarrow{d} \mathcal{N}({0}, \sigma^2), \textrm{ as } n \to \infty, \tag{3.4}
\end{equation*}
and
\begin{equation*} \label{eq:3.5}
    {\boldsymbol{\alpha}}^{\mathrm{T}} {\boldsymbol{\Sigma}^{\frac{1}{2}}_{\mathcal{A}}}(\widehat{\boldsymbol{\beta}}_{\mathcal{A}}^{\text{HT}}-\widehat{\boldsymbol{\beta}}_{\mathcal{A}}) \xrightarrow{P}0 \textrm{ as } n \to \infty, \tag{3.5}
\end{equation*}
where $ \widehat{\boldsymbol{\beta}}_{\mathcal{A}} =\{ 
\widehat{\boldsymbol{\beta}}_g: g \in \mathcal{A} \}$.
Finally, a simple application of Slutsky’s theorem results in the proof of Theorem \ref{Thm-2}. \eqref{eq:3.4} holds due to the results of the linear model followed by the block orthogonality of the design matrix. On the other hand, to establish \eqref{eq:3.5} one needs to use the Cauchy-Schwarz inequality along with assumptions \ref{assmpan-1}-\ref{assmpan-3}. In this context, some of the arguments used in Lemma 3 of Ghosh and Chakrabarti (2017) \cite{ghosh2017asymptotic} come in handy to obtain the asymptotic optimality of our proposed HT estimator. 
\begin{remark}
 To establish asymptotic normality, we have assumed a condition on the eigenvalues of the design matrix, as given in \ref{assmpan-1}.
However, a particular choice of the design matrix corresponding to the $g^{\text{th}}$ group trivializes the assumption and provides the following statement immediately. The proof follows similarly and is hence omitted. \\
Consider the hierarchical framework of \eqref{eq:2.1} satisfying \eqref{eq:2.2}, and the half-thresholding (HT) rule \eqref{eq:2.7} based on this along with an orthogonal design matrix, i.e. ${\mathbf{X}}^{\mathrm{T}}\mathbf{X}=n I_{p}$.  Let $\boldsymbol{\beta}_{\mathcal{A}}^{0}=\{\boldsymbol{\beta}_{g}^{0}: g \in \mathcal{A}\}$ and $\widehat{\boldsymbol{\beta}}_{\mathcal{A}}^{\text{HT}} =\{ \widehat{\boldsymbol{\beta}}_g^{\text{HT}} : g \in \mathcal{A}\}$. Assume $|\mathcal{A}|$ is fixed and \ref{assmp-3} is satisfied along with $L(\cdot) $ defined in \eqref{eq:2.2} satisfies  \hyperlink{assumption1}{Assumption 1}.
     Then for all $g \in \mathcal{A}$, we have     
     \begin{equation*} 
		\sqrt{n}\left(\widehat{\boldsymbol{\beta}}_g^{\text{HT}} - \boldsymbol{\beta}_g^0\right) \xrightarrow{d} \mathcal{N}_{m_g}( \mathbf{0}, \sigma^2 I_{m_g}) \textrm{ as } n \to \infty.
	\end{equation*}
The above argument implies that, when the group size reduces to unity, our result shows that the asymptotic distribution of the half-thresholding estimator proposed by Tang et al. (2018) \cite{tang2018bayesian} also achieves the optimal estimation rate. As stated in the introduction, a major contribution of Theorem \ref{Thm-2} lies in filling the gap present in the work of Tang et al. (2018) \cite{tang2018bayesian} regarding asymptotic normality of their proposed HT estimator. On the other hand, this remark establishes that the asymptotic optimality result holds based on the assumptions of Tang et al. (2018) \cite{tang2018bayesian} using our technique.
\end{remark}

\hspace*{0.05cm} Theorem \ref{Thm-1} along with Theorem \ref{Thm-2} show that our proposed half-thresholding rule \eqref{eq:2.7} is an oracle when the global shrinkage parameter is treated as a tuning one. 

\subsection{Oracle properties of the HT procedure for the empirical Bayes and full Bayes approaches}
\label{sec-3.2}
From Theorems \ref{Thm-1} and \ref{Thm-2} of the previous subsection, it is evident that the choice of $\tau$ plays a crucial role in the optimality of our variable selection rule and the estimate. It was shown that an appropriate choice of the global shrinkage parameter $\tau$ based on the proportion of active groups ensures such oracle properties. But as mentioned before, this proportion may not be known a priori. In such situations, we treat the global shrinkage parameter in empirical/full Bayes ways.
In the next three subsections, we discuss the properties of the empirical Bayes versions of the HT rule as in \eqref{eq:2.11} and \eqref{eq:3.6} and its full Bayes version as in \eqref{eq:2.14}.
\subsubsection{Oracle properties of the HT procedure using empirical Bayes approach}
\label{sec-3.2.1}
As described earlier, motivated by the work of van der Pas et al. (2014) \cite{van2014horseshoe}, we consider an empirical Bayes estimate of $\tau$ of the form \eqref{eq:2.10}. Theorem \ref{Thm-7} and \ref{Thm-6} together establish the significant fact that the rule \eqref{eq:2.11} and the corresponding estimate enjoy variable selection consistency and optimal estimation rate respectively. To the best of our knowledge, this is the first result of this kind using global-local shrinkage priors in sparse high-dimensional regression problems using the empirical Bayesian method. Proof of the theorem is deferred to section \ref{sec-5}.

\begin{theorem}
\label{Thm-7}
     Consider the hierarchical framework of \eqref{eq:2.1} where $\pi(\lambda^2_g)$ satisfies \eqref{eq:2.2}, and the empirical Bayes half-thresholding (HT) rule \eqref{eq:2.11} based on this.  Let $\mathcal{A}_n=\{g:\boldsymbol{\beta}_{g}^{0} \neq \mathbf{0}\}$ and $\widehat{\mathcal{A}}_n=\{g:{\widehat{\boldsymbol{\beta}}_{g, EB}}^{\text{HT}} \neq \mathbf{0}\}$ denote respectively the set of active groups, and the set of groups declared active by the half-thresholding rule \eqref{eq:2.11}. Recall, $\mathbf{Q}_{n,g}={\mathbf{X}}^{\mathrm{T}}_g\mathbf{X}_g/n$, for $g=1,\cdots, G$. Assume that $|\mathcal{A}|=G_{A_n}$ is unknown and tends to infinity as $n \to \infty$.
Suppose that $L(\cdot)$ in \eqref{eq:2.2} satisfies \hyperlink{assumption1}{Assumption 1} and assumptions \ref{assmp-1}-\ref{assmp-3} hold.
     Also, assume that, for $a \geq 1$, $G^{\epsilon_1}_n \lesssim G_{\mathcal{A}_n} \lesssim G^{\epsilon_2}_n$ for some $0< \epsilon_1 <\epsilon_2<\frac{1}{2}$ and when $\frac{1}{2}<a<1$, 
     $G^{\epsilon_1}_n \lesssim G_{\mathcal{A}_n} \lesssim G^{\epsilon_2}_n$ for some $0< \epsilon_1 <\epsilon_2<1-\frac{1}{2a}$, then we have
     \begin{equation*} 
		\lim_{n \to \infty} P(\mathcal{A}_n=\widehat{\mathcal{A}}_n)=1 \hspace*{0.2cm}\text{as} \hspace*{0.2cm} n \to \infty .
	\end{equation*}
\end{theorem}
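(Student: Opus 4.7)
The plan is to reduce to analogues of the two summability statements \eqref{eq:3.2}--\eqref{eq:3.3} used in the proof of Theorem \ref{Thm-1}, but with the random $\widehat{\tau}^{EB}$ in place of the deterministic $\tau_n$. The core idea is to decouple two layers of randomness: first localize $\widehat{\tau}^{EB}$ to a deterministic window $[\tau_L,\tau_U]$ of order $r_n := G_{\mathcal{A}_n}/G_n$ on a high-probability event, then use monotonicity of the posterior shrinkage weight in $\tau$ to sandwich $E(1-\kappa_g\mid\widehat{\tau}^{EB},\sigma^2,\mathcal{D})$ between $E(1-\kappa_g\mid\tau_L,\sigma^2,\mathcal{D})$ and $E(1-\kappa_g\mid\tau_U,\sigma^2,\mathcal{D})$, so that the machinery behind Theorem \ref{Thm-1} becomes available.

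For the localization, observe that under the block-orthogonal design the statistics $T_g := n\widehat{\boldsymbol{\beta}}_g^{\mathrm{T}}\mathbf{Q}_{n,g}\widehat{\boldsymbol{\beta}}_g/\sigma^2$ entering \eqref{eq:2.10} are noncentral $\chi^2$ with $m_g \leq s$ degrees of freedom and noncentrality $\Delta_g = n\boldsymbol{\beta}_g^{0\mathrm{T}}\mathbf{Q}_{n,g}\boldsymbol{\beta}_g^0/\sigma^2$. For inactive $g$, Lemma \ref{lem5} gives $P(T_g > c_1\log G_n) \lesssim G_n^{-c_1/2}(\log G_n)^{s/2-1}$, so with $c_1 \geq 2$ the expected false-positive count inside the $\max$ of \eqref{eq:2.10} is $o(G_{\mathcal{A}_n})$. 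For active $g$, \ref{assmp-1}--\ref{assmp-2} force $\Delta_g \gtrsim n^{1-2b}\gg \log G_n$, so a noncentral $\chi^2$ concentration bound together with a union bound shows that every $T_g$, $g\in\mathcal{A}$, exceeds $c_1\log G_n$ on an event $E_n$ of probability $1-o(1)$. Together these imply $\widehat{\tau}^{EB} \asymp r_n$ on $E_n$.

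On $E_n$ one can take deterministic $\tau_L \asymp \tau_U \asymp r_n$ bracketing $\widehat{\tau}^{EB}$. Inspecting \eqref{eq:3.1}, under \hyperlink{assumption1}{Assumption 1} the map $\tau \mapsto E(1-\kappa_g\mid\tau,\sigma^2,\mathcal{D})$ is non-decreasing on the relevant window (a larger $\tau$ shifts the prior mass on $\lambda_g^2\tau^2$ rightwards, weakening the shrinkage), which yields the sandwich
\begin{equation*}
E(1-\kappa_g\mid\tau_L,\sigma^2,\mathcal{D}) \leq E(1-\kappa_g\mid\widehat{\tau}^{EB},\sigma^2,\mathcal{D}) \leq E(1-\kappa_g\mid\tau_U,\sigma^2,\mathcal{D}).
\end{equation*}
I would then apply Lemmas \ref{lem1}--\ref{lem4} with $\tau_n = \tau_U$ on the inactive side of \eqref{eq:3.3} and with $\tau_n = \tau_L$ on the active side of \eqref{eq:3.2}, noting that $\tau \asymp r_n$ satisfies the window in \ref{assmp-5} for some admissible $(\delta_1,\delta_2)$ whenever $(\epsilon_1,\epsilon_2)$ lies in the range permitted here. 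The two sums go to zero exactly as in Theorem \ref{Thm-1}, and combining with $P(E_n) = 1-o(1)$ gives $P(\mathcal{A}_n = \widehat{\mathcal{A}}_n) \to 1$.

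The main obstacle, which explains the tighter restrictions $\epsilon_2 < 1/2$ for $a\geq 1$ and $\epsilon_2 < 1 - 1/(2a)$ for $\frac{1}{2}<a<1$, is that the inactive-side bounds from Lemmas \ref{lem1}--\ref{lem4} carry a factor proportional to $\tau^{2a}$ (up to a slowly varying perturbation) coming from the polynomial prior on $\lambda_g^2$. With $\tau \asymp r_n \asymp G_n^{\epsilon-1}$, this factor has to be summed against the $\asymp G_n$ inactive terms, forcing $2a(1-\epsilon_2) > 1$ and thereby carving out the permissible range of $\epsilon_2$. A secondary technical nuisance is that $\widehat{\tau}^{EB}$ and each $\pi(\kappa_g\mid\tau,\sigma^2,\mathcal{D})$ are driven by the same $\{T_g\}$, but the monotonicity sandwich replaces this coupling by a worst-case deterministic bound, so no delicate measurability or conditional-independence argument is required.
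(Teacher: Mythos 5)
Your overall architecture (reduce to the two sums, control $\widehat{\tau}^{\text{EB}}$ by deterministic quantities, then exploit monotonicity of $\tau\mapsto E(1-\kappa_g\mid\tau,\sigma^2,\mathcal{D})$) is the right skeleton, and your treatment of the active side and of the case $\frac{1}{2}<a<1$ essentially matches the paper: the paper handles the active side even more simply, using only the deterministic floor $\widehat{\tau}^{\text{EB}}\geq 1/G_n$ and the Theorem~\ref{Thm-1} argument with $\log(1/\gamma_n)\asymp\log G_n$, and its Case II for $\frac{1}{2}<a<1$ is exactly your bound $G_n\alpha_n^{2a}[\log(1/\alpha_n)]^{\frac{s}{2}-1}$ at $\alpha_n\asymp r_n$, giving $\epsilon_2<1-\frac{1}{2a}$.

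The genuine gap is the inactive side when $a\geq 1$. Your plan is to run the Theorem~\ref{Thm-1} machinery (Lemma~\ref{lem3} plus the $\chi^2$ tail bound) at $\tau_U\asymp r_n$, asserting that $\tau\asymp r_n$ lies in the window \ref{assmp-5}; it does not, since \ref{assmp-5} forces $\tau_n\lesssim r_n^{(1+\delta_1)/(1-\epsilon_1)}$ with $\delta_1>0$, i.e.\ $\tau_n=o(r_n)$. Quantitatively, for $a\geq 1$ Lemma~\ref{lem3} gives $E(1-\kappa_g\mid\tau,\sigma^2,\mathcal{D})\lesssim \tau\exp(W_{n,g}/2)$ (linear in $\tau$, not $\tau^{2a}$ as your heuristic assumes), so the resulting inactive-side bound is $G_n\tau_U[\log(1/\tau_U)]^{\frac{s}{2}-1}\asymp G_{\mathcal{A}_n}(\log G_n)^{\frac{s}{2}-1}\to\infty$ under the standing assumption $G_{\mathcal{A}_n}\to\infty$; for $a=1$ no refinement of that lemma rescues it, and for $1<a<2$ one only gets $\epsilon_2<1-\frac{1}{a}$, narrower than the claimed $\epsilon_2<\frac{1}{2}$. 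The paper closes this precisely where your argument breaks: on the event $\{\widehat{\tau}^{\text{EB}}\leq 2\alpha_n\}$ it does not bound a tail probability but applies Markov's inequality to $E\big[E(1-\kappa_g\mid 2\alpha_n,\sigma^2,\mathcal{D})\big]$, splits on $\{W_{n,g}\leq 2c_1\log G_n\}$, and uses the sharper shrinkage-weight bound \eqref{eq:5.84} whose leading terms $\tau^2 e^{W_{n,g}/4}$ and the prior-tail integral are integrated against the central $\chi^2$ density (the $e^{-u/2}$ beats $e^{u/4}$), producing an effective rate $\tau^2\log(1/\tau)$; with $\tau=2\alpha_n\asymp r_n$ this yields $G_n\alpha_n^2\log(1/\alpha_n)\asymp (G_{\mathcal{A}_n}^2/G_n)\log G_n\to 0$ exactly when $\epsilon_2<\frac{1}{2}$. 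The complementary event $\{\widehat{\tau}^{\text{EB}}>2\alpha_n\}$ is controlled not by a localization-in-probability of $\widehat{\tau}^{\text{EB}}$ but by a Chernoff--Hoeffding bound (Lemma~\ref{lem6}) on the inactive indicator sum, giving $e^{-cG_{\mathcal{A}_n}}$, which survives multiplication by $G_n$ because $G_{\mathcal{A}_n}\gtrsim G_n^{\epsilon_1}$ with $\epsilon_1>0$. Without an ingredient of this second-moment/expectation type, your route cannot recover the stated range $\epsilon_2<\frac{1}{2}$ for $a\geq 1$.
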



Observe that the decision rule \eqref{eq:2.11} corresponding to an individual group $g$ depends on the whole dataset $\mathcal{D}$ and as such the rules for different $g$'s are dependent. Proofs of these results exploit certain ideas of van der Pas et al.(2014) \cite{van2014horseshoe} and Ghosh and Chakrabarti \cite{ghosh2017asymptotic}, together with some non-trivial concentration inequalities involving the central and non-central $\chi^2$ distributions to achieve the desired upper bounds to both types of error probabilities.\\

\hspace*{0.5cm} Regarding assumptions \ref{assmp-1}-\ref{assmp-3}, see Remark \ref{remark-4} above. Our other assumption is on the total (unknown) number of active groups and our result on variable selection consistency holds for different broad sparsity regimes depending on the value of $a$ in the prior on the local shrinkage coefficients.\\

\hspace*{0.05cm} Now we investigate the asymptotic estimation rate of our proposed empirical Bayesian half-thresholding estimate \eqref{eq:2.12}. We want to know whether the asymptotic distribution of the linear combination of ${\widehat{\boldsymbol{\beta}}_{\mathcal{A}, EB}}^{\text{HT}}$ is exactly the same as that of ${\widehat{\boldsymbol{\beta}}_{\mathcal{A}}}^{\text{HT}}$. Theorem \ref{Thm-6} below provides an affirmative answer. Proof of the Theorem is provided in section \ref{sec-5}. 
\begin{theorem}
    	\label{Thm-6}
     Consider the hierarchical framework of \eqref{eq:2.1} satisfying \eqref{eq:2.2}, and the empirical Bayesian half-thresholding (HT) estimator \eqref{eq:2.12} based on this.  Assume that $|\mathcal{A}| =G_{\mathcal{A}_n}$ satisfies 
     $G^{\epsilon_1}_n \lesssim G_{\mathcal{A}_n} \lesssim G^{\epsilon_2}_n$ for some $0\leq \epsilon_1 <\epsilon_2<\frac{1}{2}$. Also assume that \ref{assmpan-1} and \ref{assmpan-2} of Theorem \ref{Thm-2} and that $L(\cdot) $ in \eqref{eq:2.2} satisfies  \hyperlink{assumption1}{Assumption 1}.
     Then for any vector $\boldsymbol{\alpha}$ with $|| \boldsymbol{\alpha}||=1$, we have 
     \begin{align*}
         {\boldsymbol{\alpha}}^{\mathrm{T}} {\boldsymbol{\Sigma}^{\frac{1}{2}}_{\mathcal{A}}}(\widehat{\boldsymbol{\beta}}_{\mathcal{A},EB}^{\text{HT}}-\boldsymbol{\beta}_{\mathcal{A}}^0) \xrightarrow{d} \mathcal{N}({0}, \sigma^2), \textrm{ as } n \to \infty.
     \end{align*}
\end{theorem}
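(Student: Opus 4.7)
My plan is to mirror the strategy used in Theorem \ref{Thm-2} and lean on Slutsky's theorem after the decomposition
\begin{equation*}
{\boldsymbol{\alpha}}^{\mathrm{T}} {\boldsymbol{\Sigma}^{\frac{1}{2}}_{\mathcal{A}}}(\widehat{\boldsymbol{\beta}}_{\mathcal{A},EB}^{\text{HT}}-\boldsymbol{\beta}_{\mathcal{A}}^0) = {\boldsymbol{\alpha}}^{\mathrm{T}} {\boldsymbol{\Sigma}^{\frac{1}{2}}_{\mathcal{A}}}(\widehat{\boldsymbol{\beta}}_{\mathcal{A}}-\boldsymbol{\beta}_{\mathcal{A}}^0) + {\boldsymbol{\alpha}}^{\mathrm{T}} {\boldsymbol{\Sigma}^{\frac{1}{2}}_{\mathcal{A}}}(\widehat{\boldsymbol{\beta}}_{\mathcal{A},EB}^{\text{HT}}-\widehat{\boldsymbol{\beta}}_{\mathcal{A}}).
\end{equation*}
The first summand is exactly \eqref{eq:3.4} in the proof of Theorem \ref{Thm-2}; under the block-orthogonal design guaranteed by \ref{assmpan-1} and normality of the errors, it converges in distribution to $\mathcal{N}(0,\sigma^2)$ by the usual OLS CLT, with no reference to $\tau$ or the prior. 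The entire task therefore reduces to showing that the second summand is $o_P(1)$.

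To control that second summand I would first invoke Theorem \ref{Thm-7} (whose hypotheses are implied by the stronger conditions \ref{assmpan-1}--\ref{assmpan-2} plus $G_n^{\epsilon_1} \lesssim G_{\mathcal{A}_n} \lesssim G_n^{\epsilon_2}$ with $\epsilon_2<1/2$): on an event $\mathcal{E}_n$ of probability tending to one, $\widehat{\mathcal{A}}_n = \mathcal{A}_n$, so for every $g \in \mathcal{A}$ the half-thresholding indicator equals one and $\widehat{\boldsymbol{\beta}}_{g,EB}^{\text{HT}} - \widehat{\boldsymbol{\beta}}_g = -E(\kappa_g\mid \widehat{\tau}^{\text{EB}},\sigma^2,\mathcal{D})\,\widehat{\boldsymbol{\beta}}_g$. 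Writing $\boldsymbol{\alpha}=(\boldsymbol{\alpha}_g)_{g \in \mathcal{A}}$ (with $\sum_{g \in \mathcal{A}}\|\boldsymbol{\alpha}_g\|^2 = 1$) and applying Cauchy–Schwarz on $\mathcal{E}_n$, I would bound
\begin{equation*}
\Bigl|{\boldsymbol{\alpha}}^{\mathrm{T}} {\boldsymbol{\Sigma}^{\frac{1}{2}}_{\mathcal{A}}}(\widehat{\boldsymbol{\beta}}_{\mathcal{A},EB}^{\text{HT}}-\widehat{\boldsymbol{\beta}}_{\mathcal{A}})\Bigr|^2 \le \sum_{g \in \mathcal{A}} \widehat{\boldsymbol{\beta}}_g^{\mathrm{T}} \mathbf{X}_g^{\mathrm{T}}\mathbf{X}_g \widehat{\boldsymbol{\beta}}_g \cdot \bigl[E(\kappa_g\mid \widehat{\tau}^{\text{EB}},\sigma^2,\mathcal{D})\bigr]^2.
\end{equation*}
Under \ref{assmpan-1}, $\widehat{\boldsymbol{\beta}}_g^{\mathrm{T}}\mathbf{X}_g^{\mathrm{T}}\mathbf{X}_g\widehat{\boldsymbol{\beta}}_g = O_P(n)$ for $g \in \mathcal{A}$, so it suffices to prove $|\mathcal{A}|\, n \cdot \max_{g \in \mathcal{A}} [E(\kappa_g\mid \widehat{\tau}^{\text{EB}},\sigma^2,\mathcal{D})]^2 = o_P(1)$, which together with the CLT above yields the claim.

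The main obstacle will be the last uniform control, because $\widehat{\tau}^{\text{EB}}$ is data-dependent and couples the posteriors across groups. My plan is to sandwich it between deterministic sequences: by (C2) and a concentration argument for the $\chi^2_{m_g}(\text{ncp})$ summands appearing in \eqref{eq:2.10}, the indicator $\mathbb{1}\{n\widehat{\boldsymbol{\beta}}_g^{\mathrm{T}}\mathbf{Q}_{n,g}\widehat{\boldsymbol{\beta}}_g/\sigma^2 > c_1\log G_n\}$ equals one with probability $1-o(G_n^{-1})$ for each $g \in \mathcal{A}$, and its complement holds with vanishing probability for $g \notin \mathcal{A}$; by a union bound and the assumed range $G_n^{\epsilon_1} \lesssim G_{\mathcal{A}_n} \lesssim G_n^{\epsilon_2}$ with $\epsilon_2 < 1/2$, there exist $0 < \underline{c} \le \overline{c} < \infty$ such that $\underline{c}\, G_n^{\epsilon_1-1} \le \widehat{\tau}^{\text{EB}} \le \overline{c}\, G_n^{\epsilon_2-1}$ with probability tending to one. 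Using the representation \eqref{eq:3.1} of the posterior of $\kappa_g$ together with the tail bounds on $L$ from \hyperlink{assumption1}{Assumption 1} and Karamata's theorem, $E(\kappa_g\mid \tau,\sigma^2,\mathcal{D})$ admits an upper bound that is monotone increasing in $\tau$ and polynomially small in $n$ for active groups (as in Lemma 3 of Ghosh and Chakrabarti (2017)); evaluating at $\tau = \overline{c}\,G_n^{\epsilon_2-1}$ yields $\max_{g \in \mathcal{A}} E(\kappa_g\mid \widehat{\tau}^{\text{EB}},\sigma^2,\mathcal{D})^2 = O_P(G_n^{-(1-\epsilon_2)\cdot 2a}\, n^{-(2a+m_g)})$ on $\mathcal{E}_n$, which beats the $|\mathcal{A}| n \lesssim G_n^{\epsilon_2+1}$ factor handily because $\epsilon_2 < 1/2$ and $a>1/2$. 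The delicate point — and the step I anticipate demanding the most care — is justifying the uniform-in-$g$ concentration of $\widehat{\tau}^{\text{EB}}$ and then the monotone-sandwich bound on the posterior mean of $\kappa_g$ when $\widehat{\tau}^{\text{EB}}$ itself depends on $\mathcal{D}$; this requires carefully separating the event $\mathcal{E}_n$ from the randomness in $\widehat{\boldsymbol{\beta}}_g$ used inside the conditional expectation, which I would handle by conditioning on $\widehat{\tau}^{\text{EB}}$ and then integrating out using Fubini together with the lemmas of Section \ref{sec-5}.
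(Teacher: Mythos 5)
Your decomposition into $T_1+T_2$ and the OLS CLT for $T_1$ match the paper, but the way you handle $T_2$ both deviates from the paper and contains genuine gaps. First, your appeal to Theorem \ref{Thm-7} is not justified under the hypotheses of Theorem \ref{Thm-6}: Theorem \ref{Thm-7} is proved only for $a>\frac{1}{2}$ (for $\frac{1}{2}<a<1$ it moreover needs $\epsilon_2<1-\frac{1}{2a}$, which is \emph{not} implied by $\epsilon_2<\frac{1}{2}$), and it assumes \ref{assmp-3} and $\epsilon_1>0$, whereas Theorem \ref{Thm-6} is asserted for all $a\ge\frac{1}{2}$ and $\epsilon_1\ge 0$. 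You only need the one-sided statement that every truly active group is declared active with probability tending to one, which does hold more generally, but you do not isolate it — and in fact the paper avoids any selection-consistency event altogether: as in the proof of Theorem \ref{Thm-2}, the identity $\sqrt{n}(\widehat{\boldsymbol{\beta}}_{g,EB}^{\text{HT}}-\widehat{\boldsymbol{\beta}}_g)=-\sqrt{n}\widehat{\boldsymbol{\beta}}_g E(\kappa_g\mid\cdot)-\sqrt{n}\widehat{\boldsymbol{\beta}}_g E(1-\kappa_g\mid\cdot)I\{E(1-\kappa_g\mid\cdot)\le 0.5\}$ together with $E(1-\kappa_g\mid\cdot)I\{E(1-\kappa_g\mid\cdot)\le 0.5\}\le E(\kappa_g\mid\cdot)$ bounds $T_2^2$ by a constant times $\sum_{g\in\mathcal{A}}W_{n,g}E(\kappa_g^2\mid\widehat{\tau}^{\text{EB}},\sigma^2,\mathcal{D})$, indicator and all.

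Second, your concentration-plus-sandwich treatment of $\widehat{\tau}^{\text{EB}}$ has the monotonicity backwards and is unnecessary. Since $E(\kappa_g\mid\tau,\sigma^2,\mathcal{D})$ is non-increasing in $\tau$, a uniform upper bound over a range of $\tau$ values must be taken at the \emph{lower} end of that range, not at $\overline{c}\,G_n^{\epsilon_2-1}$ as you propose; evaluating at the upper end gives a lower bound, so your claimed rate $O_P\big(G_n^{-(1-\epsilon_2)2a}n^{-(2a+m_g)}\big)$ does not follow (and is not the form the bounds deliver — at active groups the relevant bounds are exponentially small in $W_{n,g}$, not polynomial in $n$). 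The paper's observation is that no concentration for $\widehat{\tau}^{\text{EB}}$ is needed at all: by construction $\widehat{\tau}^{\text{EB}}\ge \gamma_n=\frac{1}{G_n}$ deterministically, hence $E(\kappa_g^2\mid\widehat{\tau}^{\text{EB}},\sigma^2,\mathcal{D})\le E(\kappa_g^2\mid\gamma_n,\sigma^2,\mathcal{D})$, and the problem reduces to the deterministic-$\tau$ argument of Theorem \ref{Thm-2} with $\tau=\frac{1}{G_n}$: on $C_n=\{W_{n,g}>2a\rho\,\sqrt{G_n}/\log G_n\}$ the bounds $h_1,h_2$ are $o(1/|\mathcal{A}|)$ because $|\mathcal{A}|=O(G_n^{\epsilon_2})$ with $\epsilon_2<\frac{1}{2}$, while $\sum_{g\in\mathcal{A}}P(C_n^c)\to 0$ by \ref{assmpan-2}. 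This also removes the Fubini/conditioning difficulty you flag, which is precisely the delicate coupling your route would otherwise have to resolve. Finally, your reduction to $|\mathcal{A}|\,n\,\max_{g\in\mathcal{A}}[E(\kappa_g\mid\cdot)]^2=o_P(1)$ via "$\widehat{\boldsymbol{\beta}}_g^{\mathrm{T}}\mathbf{X}_g^{\mathrm{T}}\mathbf{X}_g\widehat{\boldsymbol{\beta}}_g=O_P(n)$ per group" needs uniformity over a growing number of groups; the paper instead keeps the sum $\sum_{g\in\mathcal{A}}U_{n,g}$ and controls per-group probabilities whose sum vanishes.
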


As an immediate consequence of Theorem \ref{Thm-6}, we have the following corollary. The proof of this follows using the same set of arguments as used in Theorem \ref{Thm-6} and is hence skipped.
\begin{corollary}
\label{cor-3}
Consider the situation of Theorem \ref{Thm-6} along with an orthogonal design matrix, that is, ${\mathbf{X}}^{\mathrm{T}}\mathbf{X}=n I_{p}$. Assume $|\mathcal{A}|$ is fixed and \ref{assmp-3} is satisfied and that $L(\cdot) $ satisfies  \hyperlink{assumption1}{Assumption 1}.
     Then for all $g \in \mathcal{A}$, we have     
     \begin{equation*} 
		\sqrt{n}\left({\widehat{\boldsymbol{\beta}}_{g, EB}}^{\text{HT}} - \boldsymbol{\beta}_g^0\right) \xrightarrow{d} \mathcal{N}_{m_g}( \mathbf{0}, \sigma^2 I_{m_g}) \textrm{ as } n \to \infty.
	\end{equation*}
\end{corollary}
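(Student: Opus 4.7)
The plan is to read the corollary as a direct specialization of Theorem \ref{Thm-6} to the orthogonal setting, using the Cramér–Wold device to upgrade the one‑dimensional conclusion of that theorem to the required multivariate normal limit for the single block corresponding to group $g$. I would first verify that the hypotheses of Theorem \ref{Thm-6} are met: \ref{assmpan-1} is trivial since $\mathbf{X}^{\mathrm{T}}\mathbf{X}=nI_p$ has $e_{\min}=e_{\max}=n$; \ref{assmpan-2} reduces to assuming $\min_j |\beta_{gj}^{0}|>C_3>0$ for $g\in\mathcal{A}$, which is already embedded in the finite‑active‑set context (fixed $|\mathcal{A}|$ with all $\boldsymbol{\beta}_g^{0}\ne\mathbf{0}$); \ref{assmp-3} is in force; and $L(\cdot)$ satisfies \hyperlink{assumption1}{Assumption 1}. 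Finally, $|\mathcal{A}|$ fixed is a trivial boundary case of the regime $G_n^{\epsilon_1}\lesssim G_{\mathcal{A}_n}\lesssim G_n^{\epsilon_2}$ for $0\le\epsilon_1<\epsilon_2<1/2$, so Theorem \ref{Thm-6} is directly applicable.

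Next I would exploit the fact that under $\mathbf{X}^{\mathrm{T}}\mathbf{X}=nI_p$ the matrix $\boldsymbol{\Sigma}_{\mathcal{A}}=\mathbf{X}_{\mathcal{A}}^{\mathrm{T}}\mathbf{X}_{\mathcal{A}}=nI_{M}$, where $M=\sum_{g\in\mathcal{A}}m_g$, so $\boldsymbol{\Sigma}_{\mathcal{A}}^{1/2}=\sqrt{n}\,I_{M}$. Substituting this into the conclusion of Theorem \ref{Thm-6} gives, for every unit vector $\boldsymbol{\alpha}\in\mathbb{R}^{M}$,
\begin{equation*}
\sqrt{n}\,\boldsymbol{\alpha}^{\mathrm{T}}\bigl(\widehat{\boldsymbol{\beta}}_{\mathcal{A},EB}^{\text{HT}}-\boldsymbol{\beta}_{\mathcal{A}}^{0}\bigr)\xrightarrow{d}\mathcal{N}(0,\sigma^2).
\end{equation*}
Since $M$ is fixed (because $|\mathcal{A}|$ is fixed and each $m_g\le s<\infty$), the Cramér–Wold theorem upgrades this to
\begin{equation*}
\sqrt{n}\,\bigl(\widehat{\boldsymbol{\beta}}_{\mathcal{A},EB}^{\text{HT}}-\boldsymbol{\beta}_{\mathcal{A}}^{0}\bigr)\xrightarrow{d}\mathcal{N}_{M}(\mathbf{0},\sigma^2 I_{M}).
\end{equation*}
Projecting onto the $m_g$ coordinates corresponding to group $g\in\mathcal{A}$ by a fixed linear map (the relevant coordinate‑selection matrix $E_g$ with $E_g E_g^{\mathrm{T}}=I_{m_g}$) then yields the desired
\begin{equation*}
\sqrt{n}\,\bigl(\widehat{\boldsymbol{\beta}}_{g,EB}^{\text{HT}}-\boldsymbol{\beta}_{g}^{0}\bigr)\xrightarrow{d}\mathcal{N}_{m_g}(\mathbf{0},\sigma^2 I_{m_g}),
\end{equation*}
by the continuous mapping theorem.

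As a sanity check I would also sketch a direct route that avoids invoking the full strength of Theorem \ref{Thm-6}. Because of orthogonality, $\widehat{\boldsymbol{\beta}}_g=\boldsymbol{\beta}_g^{0}+n^{-1}\mathbf{X}_g^{\mathrm{T}}\boldsymbol{\epsilon}$ has exact distribution $\mathcal{N}_{m_g}(\boldsymbol{\beta}_g^{0},(\sigma^2/n)I_{m_g})$, so $\sqrt{n}(\widehat{\boldsymbol{\beta}}_g-\boldsymbol{\beta}_g^{0})\sim\mathcal{N}_{m_g}(\mathbf{0},\sigma^2 I_{m_g})$ exactly. Writing $\widehat{\boldsymbol{\beta}}_{g,EB}^{\text{HT}}-\widehat{\boldsymbol{\beta}}_g=\bigl[I\{E(1-\kappa_g\mid\widehat{\tau}^{EB},\sigma^2,\mathcal{D})>0.5\}\bigl(1-E(\kappa_g\mid\widehat{\tau}^{EB},\sigma^2,\mathcal{D})\bigr)-1\bigr]\widehat{\boldsymbol{\beta}}_g$ and conditioning on the event $\{\widehat{\mathcal{A}}_n=\mathcal{A}\}$, whose probability tends to one by Theorem \ref{Thm-7}, reduces the remainder term to $-E(\kappa_g\mid\widehat{\tau}^{EB},\sigma^2,\mathcal{D})\widehat{\boldsymbol{\beta}}_g$. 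An application of Slutsky's theorem then completes the argument once $\sqrt{n}\,E(\kappa_g\mid\widehat{\tau}^{EB},\sigma^2,\mathcal{D})\xrightarrow{P}0$ for $g\in\mathcal{A}$.

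The main obstacle in the direct route is precisely this last rate statement: showing that the posterior shrinkage weight of an active group collapses to zero faster than $n^{-1/2}$ under the data‑driven $\widehat{\tau}^{EB}$. This is the same technical pillar underlying the proof of Theorem \ref{Thm-6}, and it requires a careful analysis of the posterior \eqref{eq:3.1} combined with the upper bound $\widehat{\tau}^{EB}\le 1$, the lower bound $\widehat{\tau}^{EB}\ge 1/G_n$, and a Laplace‑type estimate exploiting $n\widehat{\boldsymbol{\beta}}_g^{\mathrm{T}}\mathbf{Q}_{n,g}\widehat{\boldsymbol{\beta}}_g\asymp n$ on active groups. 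Because Theorem \ref{Thm-6} already encapsulates this estimate, the cleanest exposition is the first route above; I would therefore present the Cramér–Wold argument as the proof and relegate the direct rate computation to a remark.
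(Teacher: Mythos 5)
Your main argument is correct, but it takes a different route from the paper: the paper simply re-runs the proof of Theorem \ref{Thm-6} at the level of the single group (the decomposition $\sqrt{n}(\widehat{\boldsymbol{\beta}}_{g,EB}^{\text{HT}}-\boldsymbol{\beta}_g^0)=\sqrt{n}(\widehat{\boldsymbol{\beta}}_g-\boldsymbol{\beta}_g^0)+\sqrt{n}(\widehat{\boldsymbol{\beta}}_{g,EB}^{\text{HT}}-\widehat{\boldsymbol{\beta}}_g)$, exact normality of the first term under ${\mathbf{X}}^{\mathrm{T}}\mathbf{X}=nI_p$, the shrinkage-weight bounds with $\widehat{\tau}^{\text{EB}}\geq 1/G_n$ for the second, then Slutsky — which is essentially your ``direct route'' sketch), whereas you treat Theorem \ref{Thm-6} as a black box and upgrade its one-dimensional conclusion via Cram\'er--Wold and a coordinate projection. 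Your route is legitimate precisely because $|\mathcal{A}|$ fixed and \ref{assmp-3} make the dimension $M=\sum_{g\in\mathcal{A}}m_g$ fixed, so the family of unit-vector statements identifies the limit $\mathcal{N}_M(\mathbf{0},\sigma^2 I_M)$; it buys brevity and avoids repeating the rate analysis, at the cost of being confined to the fixed-dimension regime (which is exactly why the general theorem is stated only through linear combinations). Two small caveats: \ref{assmpan-2} is \emph{not} implied by ``$|\mathcal{A}|$ fixed and $\boldsymbol{\beta}_g^0\neq\mathbf{0}$'' — an active group may contain zero components, so $\min_j|\beta^0_{gj}|$ need not be bounded away from zero — but this is harmless here since the corollary inherits \ref{assmpan-2} through ``the situation of Theorem \ref{Thm-6}''; and in your secondary route you invoke Theorem \ref{Thm-7} for $P(\widehat{\mathcal{A}}_n=\mathcal{A})\to1$, which as stated assumes $G_{\mathcal{A}_n}\to\infty$ with $\epsilon_1>0$ and so does not cover fixed $|\mathcal{A}|$; the paper instead dispenses with selection consistency altogether by bounding the indicator term via $0\leq E(1-\kappa_g\mid\cdot)I\{E(1-\kappa_g\mid\cdot)\leq 0.5\}\leq E(\kappa_g\mid\cdot)$, so the whole remainder reduces to the rate statement $\sqrt{n}\,E(\kappa_g\mid\widehat{\tau}^{\text{EB}},\sigma^2,\mathcal{D})\xrightarrow{P}0$ that you correctly identify as the technical pillar.
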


\subsubsection{HT method based on modified Empirical Bayes Estimator of $\tau$}
\label{sec-3.2.2}
It may be noted that the variable selection consistency of the empirical Bayes version of our variable selection rule using the estimator \eqref{eq:2.11} of $\tau$ was only proved when $a >\frac{1}{2}$ in the prior \eqref{eq:2.2} for the local shrinkage parameter. However, a similar result for the case $a=\frac{1}{2}$, which, for instance, corresponds to the horseshoe prior, could not be theoretically established using the same technique. Our simulation results are very good even for $a=\frac{1}{2}$ and are indicative of the fact even in this case. So in search for an empirical Bayes estimator of $\tau$ that can be shown to have an oracle property for all $a \geq \frac{1}{2}$, we need to dig a little deeper into the basic intuition and also the technical aspects of the proof for the $a>\frac{1}{2}$ case. The empirical Bayes estimator \eqref{eq:2.11} used by us was motivated by a similar estimator of van der Pas et al. (2014) \cite{van2014horseshoe} as in \eqref{eq:2.9}. It may be noted that the term $\frac{1}{c_2G_n}\sum_{g=1}^{G_n}1\left(\frac{n\widehat{\boldsymbol{\beta}}_g^{\mathrm{T}}\mathbf{Q}_g\widehat{\boldsymbol{\beta}}_g}{\sigma^2}>c_1 \log{G_n}\right)$ may intuitively be thought of as an estimator (or an estimated lower bound) of $\frac{G_{\mathcal{A}_n}}{G_n}$, the proportion of active groups. Our proof reveals that we need to ensure $G_n {\tau_n}^{2a} [\log (\frac{1}{\tau})]^{\frac{s}{2}-1} \to 0$ as $n \to \infty$, where $s$ denotes the maximum group sizes. For the $a >\frac{1}{2}$ case, $\tau_n=\frac{G_{\mathcal{A}_n}}{G_n}$ satisfies the condition, and it is quite intuitive that the empirical Bayes version of the HT procedure using $\widehat{\tau}$ can be shown to have variable selection consistency. Intuitively, for $a=\frac{1}{2}$, by choosing $\tau_n=\frac{G_{\mathcal{A}_n}}{G_n}$ the above condition is not satisfied, but the choice $\tau_n=(\frac{G_{\mathcal{A}_n}}{G_n})^{1+\delta}$ for any $\delta>0$ works. This gives us the clue that if $\tau$ is estimated using a statistic which can be thought of as an estimator (or at least an estimated lower bound) of $(\frac{G_{\mathcal{A}_n}}{G_n})^{1+\delta}$ for some $\delta>0$, we might have the desired result. Based on this, we consider a modified version of our early estimate as follows
\begin{equation*} \label{eq:3.6}
	({\widehat{\tau}}^{\text{EB}})^{\frac{1}{2}}=\text{max}\biggl\{\frac{1}{G_n}, \frac{1}{c_2G_n}\sum_{g=1}^{G_n}1\left(\frac{n\widehat{\boldsymbol{\beta}}_g^{\mathrm{T}}\mathbf{Q}_g\widehat{\boldsymbol{\beta}}_g}{\sigma^2}>c_1 \log{G_n}\right)\biggr\}. \tag{3.6}
\end{equation*}
 Hence, our proposed modified data-adaptive decision rule is given by
 \begin{equation*} \label{eq:3.7}
\text{The }	g^{th} \text{group is considered active if } E(1-\kappa_g\mid {\widehat{\tau}}^{\text{EB}},\sigma^2,\mathcal{D}) >0.5, \textrm{ for } g=1,2,\cdots,G,\tag{3.7}
 \end{equation*}
 where ${\widehat{\tau}}^{\text{EB}}$ is defined in \eqref{eq:3.6}.
Our next theorem shows that, indeed, the empirical Bayes version of the HT procedure with the above estimate \eqref{eq:3.6} of $\tau$ enjoys variable selection consistency.
 
\begin{theorem}
\label{Thm-8}
     Consider the hierarchical framework of \eqref{eq:2.1} satisfying \eqref{eq:2.2}, and the half-thresholding (HT) rule \eqref{eq:3.7} based on an empirical Bayes estimate ${\widehat{\tau}}^{\text{EB}}$ given in \eqref{eq:3.6}. Let $\mathcal{A}_n=\{g:\boldsymbol{\beta}_{g}^{0} \neq \mathbf{0}\}$ and $\widehat{\mathcal{A}}_n=\{g:{\widehat{\boldsymbol{\beta}}_{g, EB}}^{\text{HT}} \neq \mathbf{0}\}$ denote respectively the set of truly active groups, and the set of groups declared active by the half-thresholding rule \eqref{eq:3.7}. Suppose that $L(\cdot)$ in \eqref{eq:2.2} satisfies \hyperlink{assumption1}{Assumption 1} and assumptions \ref{assmp-1}-\ref{assmp-3} hold.
     Also, assume that, for $a \geq 0.5$, $G_{A_n} =O (G^{\epsilon}_n)$ with $0<\epsilon<\frac{1}{2}$, then we have
     \begin{equation*} 
		\lim_{n \to \infty} P(\mathcal{A}_n=\widehat{\mathcal{A}}_n)=1 \hspace*{0.2cm}\text{as} \hspace*{0.2cm} n \to \infty.\end{equation*}
\end{theorem}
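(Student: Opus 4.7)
\textbf{Proof plan for Theorem \ref{Thm-8}.} Following the strategy of Theorem \ref{Thm-7}, variable selection consistency reduces, via the equivalent formulation analogous to \eqref{eq:3.2}--\eqref{eq:3.3}, to showing that both
\begin{equation*}
\sum_{g\in\mathcal{A}}P\!\left(E(1-\kappa_g\mid\widehat{\tau}^{EB},\sigma^2,\mathcal{D})<\tfrac12\right)\quad\text{and}\quad \sum_{g\notin\mathcal{A}}P\!\left(E(1-\kappa_g\mid\widehat{\tau}^{EB},\sigma^2,\mathcal{D})>\tfrac12\right)
\end{equation*}
tend to zero. The key insight driving the choice \eqref{eq:3.6} is that the ``squared'' form effectively plugs in $\widehat{\tau}^{EB}$ of order $(G_{\mathcal{A}_n}/G_n)^{2}$ rather than $G_{\mathcal{A}_n}/G_n$; this extra factor of two in the exponent is what makes the controlling Type~I quantity $G_n\tau^{2a}[\log(1/\tau)]^{s/2-1}\to 0$ hold uniformly for every $a\ge 1/2$ under $\epsilon<1/2$, in particular unlocking the horseshoe case $a=1/2$ that was out of reach of Theorem \ref{Thm-7}.

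The first step is to sandwich $\widehat{\tau}^{EB}$ between deterministic sequences with high probability. Writing $S_n:=\sum_{g=1}^{G_n}\mathbb{1}\{n\widehat{\boldsymbol{\beta}}_g^{\mathrm T}\mathbf{Q}_{n,g}\widehat{\boldsymbol{\beta}}_g/\sigma^2>c_1\log G_n\}=S_n^{\mathcal{A}}+S_n^{\mathcal{A}^{c}}$, one has trivially $S_n^{\mathcal{A}}\le G_{\mathcal{A}_n}$, while for inactive groups the sufficient statistic is exactly $\chi^2_{m_g}$-distributed, so the chi-square tail bound of Lemma \ref{lem5} yields $P(\chi^2_{m_g}>c_1\log G_n)\lesssim G_n^{-c_1/2}(\log G_n)^{s/2-1}$. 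Choosing $c_1>2$ and applying Markov's inequality forces $S_n^{\mathcal{A}^c}=o_P(1)$, so on an event $\mathcal{E}_n$ with $P(\mathcal{E}_n^{c})\to 0$,
\begin{equation*}
l_n:=\frac{1}{G_n^{2}}\le\widehat{\tau}^{EB}\le C\!\left(\frac{G_{\mathcal{A}_n}}{G_n}\right)^{2}+o(G_n^{-2})=:u_n\asymp G_n^{2(\epsilon-1)}.
\end{equation*}
Both $l_n$ and $u_n$ satisfy $\log(1/\tau)\lesssim\log G_n$, matching assumption \ref{assmp-6} of Proposition \ref{prop-2}.

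Next I would bound the Type~I and Type~II sums by replacing the random $\widehat{\tau}^{EB}$ with the sandwich endpoints. For Type~I, bound $E(1-\kappa_g\mid\widehat{\tau}^{EB},\sigma^2,\mathcal{D})$ above by $E(1-\kappa_g\mid u_n,\sigma^2,\mathcal{D})$ on $\mathcal{E}_n$, exploiting the essentially monotone dependence of the posterior shrinkage weight on $\tau$---verifiable through the reparameterisation $\xi_g=\lambda_g^2\tau^2$ together with the two-sided control on $L$ supplied by \hyperlink{assumption1}{Assumption 1}. The remaining sum coincides with the Type~I sum of Theorem \ref{Thm-1} at the deterministic value $\tau=u_n$, and the leading term $G_n u_n^{2a}=O(G_n^{1-4a(1-\epsilon)})$ vanishes because $\epsilon<1/2$ gives $4a(1-\epsilon)>2a\ge 1$ for every $a\ge 1/2$. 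For Type~II, the lower sandwich bounds $E(1-\kappa_g\mid\widehat{\tau}^{EB},\sigma^2,\mathcal{D})$ below by $E(1-\kappa_g\mid l_n,\sigma^2,\mathcal{D})$; since $l_n=G_n^{-2}$ satisfies the hypotheses of Proposition \ref{prop-2}, the quantitative non-central $\chi^{2}$ tail bounds developed in the proof of Theorem \ref{Thm-1} give $P(E(1-\kappa_g\mid l_n,\sigma^2,\mathcal{D})<1/2)=o(G_n^{-\epsilon})$, and summing over $|\mathcal{A}|=O(G_n^{\epsilon})$ yields $o(1)$.

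The main obstacle is twofold. First, justifying the monotonicity step rigorously is delicate because $L$ is only slowly varying, not monotone; the way around this is to bound the posterior expectation above and below directly, using \hyperlink{assumption1}{Assumption 1}(b) ($\sup L\le M$) to handle the numerator and \hyperlink{assumption1}{Assumption 1}(a) ($L\ge c_0$ eventually) to handle the denominator, which suffices to replace the random $\widehat{\tau}^{EB}$ by $u_n$ or $l_n$ up to absolute constants. Second, the auxiliary concentration inequalities of Lemmas \ref{lem1}--\ref{lem4} must be re-applied in the slightly tighter regime $u_n\asymp G_n^{2(\epsilon-1)}$ rather than $G_n^{\epsilon-1}$; reworking those lemmas with the doubled exponent---inherited from squaring the old estimator---is the technical heart of the argument, and it is exactly this doubling that makes the statement hold uniformly in $a\ge 1/2$.
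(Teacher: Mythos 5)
Your overall skeleton matches the paper's: the deterministic lower bound $\widehat{\tau}^{\text{EB}}\ge G_n^{-2}$ disposes of the Type-II sum exactly as in the paper (monotonicity plus the fixed-$\tau$ argument of Theorem \ref{Thm-1} with $\log(1/\tau)\asymp\log G_n$), and for the Type-I sum the paper likewise replaces $\widehat{\tau}^{\text{EB}}$ by a deterministic value of order $(G_{\mathcal{A}_n}/G_n)^2$ (namely $(2\alpha_n)^2$ with $\alpha_n=c_2G_{\mathcal{A}_n}/G_n$) and invokes the fixed-$\tau$ bound $G_n\tau^{2a}[\log(1/\tau)]^{\frac{s}{2}-1}$ for $\tfrac12\le a<1$ (and $G_n\tau[\log(1/\tau)]^{\frac{s}{2}-1}$ for $a\ge1$), which vanishes under $\epsilon<\tfrac12$ precisely because of the squaring in \eqref{eq:3.6} — this is the same "doubled exponent" mechanism you identify.

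The genuine gap is in how you control the exceptional event $\{\widehat{\tau}^{\text{EB}}>u_n\}$. Because this is a single global event depending on all groups, the contribution of its complementary part to the Type-I sum is bounded only by $(G_n-G_{\mathcal{A}_n})\,P(\mathcal{E}_n^c)$, so you need $P(\mathcal{E}_n^c)=o(1/G_n)$, not merely $P(\mathcal{E}_n^c)\to0$. Markov's inequality on $S_n^{\mathcal{A}^c}$ gives only $P(S_n^{\mathcal{A}^c}\ge1)\lesssim G_n^{1-c_1/2}(\log G_n)^{\frac{s}{2}-1}$: with the canonical $c_1=2$ (the value allowed by \eqref{eq:3.6} and used in the paper's simulations) this does not even tend to zero, and for any $c_1\le4$ it fails the required $o(1/G_n)$ rate; "choosing $c_1>2$" also silently changes the estimator. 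The paper's proof avoids this by not asking for $S_n^{\mathcal{A}^c}$ to vanish at all: it applies the Chernoff--Hoeffding inequality for independent non-identically distributed indicators (Lemma \ref{lem6}) to the count of inactive groups exceeding $c_1\log G_n$, obtaining $P\bigl((\widehat{\tau}^{\text{EB}})^{1/2}>2\alpha_n\bigr)\le e^{-\frac{G_{\mathcal{A}_n}}{c_2}(1+o(1))}$ as in \eqref{eq:5.69}, which is exponentially small and hence survives multiplication by $G_n$. Replacing your Markov step by this relative-entropy bound (or any bound giving $o(1/G_n)$ for all $c_1\ge2$) is needed to close the argument. Your concern about monotonicity of the shrinkage weight in $\tau$ is fair — the paper simply asserts that $E(\kappa_g\mid\tau,\sigma^2,\mathcal{D})$ is non-increasing in $\tau$ and uses it at both endpoints — and your fallback of applying the Lemma \ref{lem3}/Lemma \ref{lem4} bounds uniformly over the sandwich interval (their $\tau^{2a}$ and $\tau^{-2a}$ factors are monotone) is a workable substitute, but it is secondary to the concentration issue above.
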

\begin{remark}
    The decision rule \eqref{eq:3.7} based on the modified empirical Bayes estimator of $\tau$ given in \eqref{eq:3.6} not only achieves the variable selection consistency but also the corresponding estimator attains the optimal estimation rate, that is, the statement of Theorem \ref{Thm-6} still holds just by replacing the definition of $\widehat{\tau}^{\text{EB}}$ given in \eqref{eq:2.10} with \eqref{eq:3.6} in \eqref{eq:2.12}. Hence, this also implies that our proposed decision rule \eqref{eq:3.7} with $\widehat{\tau}^{\text{EB}}$ defined in \eqref{eq:3.6} is oracle.
\end{remark}
\subsubsection{Oracle properties of the HT procedure using full Bayes approach}
\label{sec-3.2.3}
We now motivate and present the full Bayes approach as an alternative solution to the empirical Bayes approach mentioned in the previous subsection \ref{sec-3.2.1} when $G_{A_n}$ is unknown. As already proved in Theorem \ref{Thm-1}, using $\tau$ as a tuning parameter, our proposed decision rule \eqref{eq:2.7} results in consistency in variable selection. On the other hand, Theorem \ref{Thm-7} shows that the data-adaptive decision rule \eqref{eq:2.11} using an empirical Bayes estimate of $\tau$ still can figure out all the true active groups present in the model. The next obvious question is whether similar results still hold in a full Bayes treatment. To answer this question for the full Bayes approach, we assume a nondegenerate prior on $\tau$ and assume the following condition on the range of prior density of $\tau$.
{\textbf{\hypertarget{condition1}{D1:}}} $\int_{\gamma_{1n}}^{\gamma_{2n}} \pi(\tau) d \tau=1$ where $\gamma_{2n}$ satisfies $G_n \gamma_{2n} [\log (\frac{1}{\gamma_{2n}})]^{\frac{s}{2}-1}\to 0$ as $n \to \infty$, $\gamma_{1n}$ replaced by $\tau_n$ satisfies $\log (\frac{1}{\tau_n}) \asymp \log(G_n)$ and \ref{assmpan-3} such that $\frac{\gamma_{2n}}{\gamma_{1n}} \to \infty$ as $n \to \infty$.\\
\vskip 5pt
The motivation behind \hyperlink{condition1}{D1} comes from Theorems \ref{Thm-1} and \ref{Thm-2}. Note that, both \ref{assmp-4} and \ref{assmp-5} provide some asymptotic order of $\tau$ to achieve selection consistency when $\tau$ is used as a tuning parameter. This observation motivates
us to study whether the decision rule \eqref{eq:2.14} can produce selection consistency of the underlying model when a non-degenerate prior on $\tau$ satisfying such conditions is considered. Similar to Theorem \ref{Thm-6}, we are also interested in studying the asymptotic distribution of the posterior mean of the group coefficient under the full Bayes approach. Our next Theorem affirmatively answers these questions.


\begin{theorem}
\label{Thm-9}
     Consider the hierarchical framework of \eqref{eq:2.1}, and the half-thresholding (HT) rule \eqref{eq:2.11} where $\pi(\lambda^2_g)$ satisfies \eqref{eq:2.2} and  $\tau$ is assumed to have a non-degenerate prior distribution satisfying \hyperlink{condition1}{D1} above. Let $\mathcal{A}_n=\{g:\boldsymbol{\beta}_{g}^{0} \neq \mathbf{0}\}$ and $\widehat{\mathcal{A}}_n=\{g:{\widehat{\boldsymbol{\beta}}_{g, FB}}^{\text{HT}} \neq \mathbf{0}\}$ denote respectively the set of active groups, and the set of groups declared active by the half-thresholding rule \eqref{eq:2.14}. Define, $\mathbf{Q}_{n,g}={\mathbf{X}}^{\mathrm{T}}_g\mathbf{X}_g/n$, for $g=1,\cdots, G$. Then, under the assumptions \ref{assmpan-1} and \ref{assmpan-2} and that $G^{\epsilon_1}_n \lesssim G_{\mathcal{A}_n} \lesssim G^{\epsilon_2}_n$ for some $0< \epsilon_1 <\epsilon_2<\frac{1}{2}$, with $0<\epsilon<\frac{1}{2},$ 
     the decision rule \eqref{eq:2.14} is an oracle.
\end{theorem}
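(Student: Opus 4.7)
The strategy is to reduce the full Bayes analysis to the fixed-$\tau$ analyses of Theorems \ref{Thm-1} and \ref{Thm-2} by exploiting the compact prior support $[\gamma_{1n},\gamma_{2n}]$ given in D1. Since $\pi(\tau)$ (hence $\pi(\tau\mid\mathcal{D})$) is supported on this interval,
\[
E(1-\kappa_g\mid\mathcal{D})=\int_{\gamma_{1n}}^{\gamma_{2n}} E(1-\kappa_g\mid\tau,\sigma^2,\mathcal{D})\,\pi(\tau\mid\mathcal{D})\,d\tau.
\]
The first step I would carry out is to establish monotonicity of $\tau\mapsto E(1-\kappa_g\mid\tau,\sigma^2,\mathcal{D})$: inspecting the conditional posterior (3.1) of $\kappa_g$, the only $\tau$-dependence sits inside $L(\tau^{-2}(\kappa_g^{-1}-1))$, and for $\tau$ in the shrinking range of D1 the argument of $L$ is large, so Assumption 1 pins $L$ between $c_0$ and $M$ and the density $\pi(\kappa_g\mid\tau,\mathcal{D})$ has a TP$_2$ (MLR) structure in $(\kappa_g,\tau^{-1})$. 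This yields the sandwich
\[
E(1-\kappa_g\mid\gamma_{1n},\sigma^2,\mathcal{D})\;\leq\; E(1-\kappa_g\mid\mathcal{D})\;\leq\; E(1-\kappa_g\mid\gamma_{2n},\sigma^2,\mathcal{D}).
\]

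Given the sandwich, selection consistency follows from the endpoints. For an inactive group, $\{E(1-\kappa_g\mid\mathcal{D})>1/2\}\subseteq\{E(1-\kappa_g\mid\gamma_{2n},\sigma^2,\mathcal{D})>1/2\}$; the requirement $G_n\gamma_{2n}[\log(1/\gamma_{2n})]^{s/2-1}\to 0$ in D1 is exactly the sparsity-matching condition used to prove (3.3) in Theorem \ref{Thm-1}. So Lemmas \ref{lem1}–\ref{lem4} apply with $\tau_n$ replaced by $\gamma_{2n}$, and $\sum_{g\notin\mathcal{A}}P(E(1-\kappa_g\mid\mathcal{D})>1/2)\to 0$. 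For an active group, $\{E(1-\kappa_g\mid\mathcal{D})<1/2\}\subseteq\{E(1-\kappa_g\mid\gamma_{1n},\sigma^2,\mathcal{D})<1/2\}$; the condition $\log(1/\gamma_{1n})\asymp\log G_n$ in D1 is precisely \ref{assmp-6}, so Proposition \ref{prop-2} combined with the active-tail step of Theorem \ref{Thm-1} applied at $\tau_n=\gamma_{1n}$ gives $\sum_{g\in\mathcal{A}}P(E(1-\kappa_g\mid\mathcal{D})<1/2)\to 0$. Since $|\mathcal{A}|\lesssim G_n^{\epsilon_2}$ with $\epsilon_2<1/2$, both sums are controllable, giving $P(\widehat{\mathcal{A}}_n=\mathcal{A}_n)\to 1$.

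For asymptotic normality, I would decompose
\[
\boldsymbol{\alpha}^{\mathrm{T}}\boldsymbol{\Sigma}_{\mathcal{A}}^{1/2}\bigl(\widehat{\boldsymbol{\beta}}_{\mathcal{A},FB}^{\mathrm{HT}}-\boldsymbol{\beta}_{\mathcal{A}}^{0}\bigr)=\boldsymbol{\alpha}^{\mathrm{T}}\boldsymbol{\Sigma}_{\mathcal{A}}^{1/2}\bigl(\widehat{\boldsymbol{\beta}}_{\mathcal{A}}-\boldsymbol{\beta}_{\mathcal{A}}^{0}\bigr)+\boldsymbol{\alpha}^{\mathrm{T}}\boldsymbol{\Sigma}_{\mathcal{A}}^{1/2}\bigl(\widehat{\boldsymbol{\beta}}_{\mathcal{A},FB}^{\mathrm{HT}}-\widehat{\boldsymbol{\beta}}_{\mathcal{A}}\bigr).
\]
The first term converges to $\mathcal{N}(0,\sigma^2)$ by classical OLS theory under \ref{assmpan-1} and block orthogonality, exactly as in (3.4). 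For the second, condition on the selection-consistent event (whose complement is negligible by the previous paragraph): there $\widehat{\boldsymbol{\beta}}_{g,FB}^{\mathrm{HT}}-\widehat{\boldsymbol{\beta}}_g=-E(\kappa_g\mid\mathcal{D})\widehat{\boldsymbol{\beta}}_g$ for every $g\in\mathcal{A}$, and by the sandwich $E(\kappa_g\mid\mathcal{D})\leq E(\kappa_g\mid\gamma_{1n},\sigma^2,\mathcal{D})$. The Cauchy–Schwarz argument of Theorem \ref{Thm-2}, applied with $\tau_n=\gamma_{1n}$ and using that $\gamma_{1n}$ satisfies \ref{assmpan-3} under D1 and that $|\mathcal{A}|\lesssim G_n^{\epsilon_2}$ with $\epsilon_2<1/2$, then forces the second term to $0$ in probability. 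Slutsky's theorem completes the proof.

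The main obstacle I anticipate is the rigorous verification of monotonicity of $\tau\mapsto E(1-\kappa_g\mid\tau,\sigma^2,\mathcal{D})$ uniformly across the full class of slowly varying $L$ permitted by Assumption 1; the argument via MLR is clean when $L$ is essentially constant on the relevant range but requires care when oscillation of $L$ is present. If strict monotonicity fails, the fallback is to replace the endpoint sandwich by a direct $\sup$–$\inf$ bound over $\tau\in[\gamma_{1n},\gamma_{2n}]$, which forces us to upgrade Lemmas \ref{lem1}–\ref{lem4} to hold uniformly in $\tau$ on this interval — a tractable but technically heavier extension of the fixed-$\tau$ concentration bounds already in hand.
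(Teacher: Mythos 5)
Your proposal follows essentially the same route as the paper's proof: the paper likewise sandwiches $E(1-\kappa_g\mid\mathcal{D})$ between the fixed-$\tau$ shrinkage weights evaluated at the endpoints $\gamma_{1n}$ and $\gamma_{2n}$ of the prior support (invoking monotonicity of $E(\kappa_g\mid\tau,\sigma^2,\mathcal{D})$ in $\tau$, which it simply asserts rather than justifying via an MLR/TP$_2$ argument), then recycles the Theorem \ref{Thm-1} error bounds at $\tau=\gamma_{1n}$ (active groups, using $\log(1/\gamma_{1n})\asymp\log G_n$) and $\tau=\gamma_{2n}$ (inactive groups, using $G_n\gamma_{2n}[\log(1/\gamma_{2n})]^{\frac{s}{2}-1}\to 0$), and for the estimation rate reduces to $\sum_{g\in\mathcal{A}}W_{n,g}E(\kappa_g^2\mid\gamma_{1n},\sigma^2,\mathcal{D})\xrightarrow{P}0$ exactly as in Theorem \ref{Thm-2} before applying Slutsky. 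Your anticipated obstacle is therefore not treated in the paper either, and your outline is correct and complete to the paper's own level of rigor.
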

This result ensures that the decision rule \eqref{eq:2.14} based on any non-degenerate proper prior on $\tau$ defined in our proposed support as given in \hyperlink{condition1}{C1} can be used as an alternative solution to the empirical Bayes approaches to provide similar results both in terms of selection consistency and optimal estimation rate. It is noteworthy that if one is interested in establishing selection consistency only, one can establish that using slightly weaker assumptions \ref{assmp-1} and \ref{assmp-2} instead of \ref{assmpan-1} and \ref{assmpan-2}
and may assume that $\gamma_{1n}$ satisfies $\log (\frac{1}{\gamma_{1n}}) \asymp \log(G_n)$ only
in place of satisfying both $\log (\frac{1}{\gamma_{1n}}) \asymp \log(G_n)$ and \ref{assmpan-3}. However, we need to have stronger assumptions slightly stronger assumptions to prove selection consistency and optimal estimation rate, simultaneously. Tang et al. (2018) \cite{tang2018bayesian} also studied their proposed half-thresholding rule using a non-degenerate prior on $\tau$ supported on some suitable range based on the sample size $n$. However, the main drawback of their approach was the assumption that the number of active variables is fixed, a condition that is rarely satisfied in high-dimensional situations. On the other hand, the optimality of our rule \eqref{eq:2.14} is proved without that assumption. Hence, when the group size reduces to unity for all groups, Theorem \ref{Thm-9} confirms that our proposed rule is still an oracle without any strong assumption on the growth of the active variables.
To our knowledge, this is the first result of this kind in the full Bayes approach in the literature of global-local priors when the number of active groups grows with increasing sample size $n$. In this way, in a sparse high-dimensional group selection problem, Theorem \ref{Thm-9} establishes the fact that a carefully chosen broad class of global-local priors can provide optimal results even if the level of sparsity is unknown. 
\section{Simulations}
\label{sec-4}
In this section, we report the performance of our proposed rules in a detailed simulation study and compare that with other existing methods.
Let us simulate data from the following true model:- 
$\mathbf{y}=\sum_{g=1}^{G}\mathbf{X}_g\boldsymbol{\beta}_g++\boldsymbol{\epsilon}, \boldsymbol{\epsilon} \sim \mathcal{N}_n(\mathbf{0},\sigma^2I_n)$. The construction of the design matrix $\mathbf{X}_g$ is discussed separately for $n>p$ and $p>n$ below. The group coefficients $\boldsymbol{\beta}_g$ are either null or non-null. Several choices of null and non-null coefficients are considered in different simulation schemes. Different scenarios based on sample size (small, $n=50$, moderate, $n=200$ and large, $n=500$), number of covariates ($p$), number of groups, and sparsity levels are considered which are different based on group sizes and within group coefficients. \\

 Each group regression coefficient $\boldsymbol{\beta}_g$ is modeled by a global-local shrinkage prior as given in \eqref{eq:2.1}. A standard half Cauchy prior is used for the local shrinkage coefficient corresponding to each group, i.e. $\lambda_g \simiid C^{+}(0,1)$. Note that this choice of prior is included in the hierarchical formulation \eqref{eq:2.1} satisfying
\eqref{eq:2.2} for $a=\frac{1}{2}$. Together with \eqref{eq:2.1}, we also consider the modeling of $\boldsymbol{\beta}_g$ as $\boldsymbol{\beta}_g|  \lambda^2_g,\sigma^2, \tau^2 \simind \mathcal{N}_{m_g}(\mathbf{0},\lambda^2_g\sigma^2 \tau^2I_{m_g})$. These two formulations are
named modified group horseshoe and normal group horseshoe, respectively. When knowledge of the proportion of active groups is available, $\tau$ is used as a tuning parameter.
Theorem \ref{Thm-1} provides
one choice of $\tau$ as $\tau_n= (\frac{G_{\mathcal{A}_n}}{G_n})^{2+\delta}$ for any $\delta>0$.
Here we use $\tau_n= (\frac{G_{\mathcal{A}_n}}{G_n})^{2.1}$ i.e. $\delta=0.1$. When this knowledge is not available, we also consider empirical Bayes and full Bayes versions to estimate $\tau$.
 In case of the empirical Bayes procedure, we take $c_1=2,c_2=1$ and $\sigma$ equal to 1 in the definitions of ${\widehat{\tau}}^{\text{EB}}$, given in \eqref{eq:2.10} and \eqref{eq:3.6}. The resulting estimators are named Modified group horseshoe EB1(based on \eqref{eq:2.10}) and Modified group horseshoe EB2(based on \eqref{eq:3.6}), respectively.
 For the full Bayes procedure, we use standard half-Cauchy prior on $\tau$ ($\tau \sim C^{+}(0,1)$) which is supported on $[G^{-1.1}, G^{-1.1} \log G]$. Note that, since the simulation situations considered here are beyond the block-orthogonality assumption in the design matrix $\mathbf{X}$, we consider a group to be active or inactive using the decision rule \eqref{eq:2.5}. Also, note that the posterior mean of the group coefficient involves a choice of $\tau$, and hence different procedures mentioned above regarding the choice of $\tau$ play a crucial role. In the case of a block-diagonal design matrix, decision rule \eqref{eq:2.5} simplifies to \eqref{eq:2.7}. Similarly, we also use \eqref{eq:2.11} and \eqref{eq:2.14} for the empirical Bayes and full Bayes versions specifically for a block-diagonal design matrix.
  We are also going to use Group Spike and Diffusing prior of Yang and Narisetty (2020) \cite{yang2020consistent} (hereby named as GSD-SSS) used on the group regression coefficients and the
estimates computed from shotgun stochastic search algorithm(SSS) and Bayesian Group LASSO with Spike and Slab prior (BGL-SS) due to \cite{xu2015bayesian} for comparing the performance between one-group shrinkage prior and the two group spike and slab prior. Along with these two Bayesian approaches, we will also consider Group LASSO of Yaun and Lin (2006) \cite{yuan2006model} as a candidate for frequentist procedure.  The misclassification probability (MP), the false positive rate (FPR), and the true positive rate (TPR) corresponding to each of the aforementioned procedures will be compared.\\ 
All simulation situations considered here can be broadly classified into two cases, namely $n>p$ and $p>n$. \\
\vskip 2pt
 \textbf{Case-1:-} First we consider the cases where $n>p$. For each group, each row of the design matrix $\mathbf{X}_g$ is generated from a multivariate normal distribution such that the components have zero mean and unit variance and are correlated with
pairwise correlation $\rho$. Two values of $\rho$ are chosen $0$ and $0.5$, which indicates that the predictors within a group are uncorrelated and moderately correlated, respectively. In the following examples, different choices of $(n,p)$ along with different signal strengths and different sparsity levels are to be considered.

\begin{itemize}
	\item Example 1. We start the simulation study with a small sample size.
 Here, we consider a situation when sample size $n=50$ and $p=20$ covariates are grouped in $10$ groups containing $2$ covariates each. Regarding the group coefficients, we consider two situations based on the strength of the active coefficients. When the strength is weak, let $\boldsymbol{\beta}=(\mathbf{0},\mathbf{0},\mathbf{0},\mathbf{0.2},\mathbf{0},\mathbf{0},\mathbf{0},\mathbf{0},\mathbf{0},\mathbf{0})$.
 On the other hand, for strong signal strength, we assume $\boldsymbol{\beta}=(\mathbf{0},\mathbf{0},\mathbf{0},\mathbf{1},\mathbf{0},\mathbf{0},\mathbf{0},\mathbf{0},\mathbf{0},\mathbf{0})$. In both situations, $\mathbf{0}$ and $\mathbf{0.2}, \mathbf{1}$ are vectors of length 2, with all elements 0 or 0.2 and 1, respectively. Since the group size is $2$, each row of $\mathbf{X}_g$ is generated from a 
 $\mathcal{N}_2(0,0,1,1,\rho)$ where $\mathcal{N}_2(\cdot)$ denotes a 
 bivariate normal distribution.

 \item Example 2. Next, we consider the moderate sample size case. Towards that, we consider a framework where $n=200$ and $p=40$ covariates are grouped in 10 groups containing 4 covariates each. We assume only the first group is active. Similar to example 1, in this case too, we consider two situations based on signal strength.
 Group coefficients for these two cases are,  $\boldsymbol{\beta}=((0.1,0.2,0.3,0.4),\mathbf{0},\mathbf{0},\cdots,\mathbf{0},\mathbf{0})$ for weak signal strength, and
 $\boldsymbol{\beta}=((1.1, 1.2,1.3,1.4),\mathbf{0},\mathbf{0},\cdots,\mathbf{0},\mathbf{0})$ for strong signal strength. Here $\mathbf{0}$ is a null vector of length 4. The data generated scheme is similar to Example 1 except for necessary dimension changes.
 
 \item Example 3. Now we are interested in the case when the group sizes are different. Let us consider the scenario when $n=200$ and $p=50$ predictors are grouped in $16$ groups with group sizes 4,3,3,2,2,2,2,2,2,4,4,4,4,2,5 and 5 respectively. Let $\boldsymbol{\beta}=((0.1, 0.2, 0.3, 0.4),\mathbf{0},\mathbf{0},\mathbf{0},\mathbf{0},\mathbf{0},\mathbf{0},\mathbf{0},\mathbf{0},\mathbf{0}, (0,0.4,0,0),\mathbf{0},\mathbf{0},\mathbf{0},\mathbf{0},\mathbf{0})$ for weak signal strength and 
 $\boldsymbol{\beta}=((1, 2, 3, 4),\mathbf{0},\mathbf{0},\mathbf{0},\mathbf{0},\mathbf{0},\mathbf{0},\mathbf{0},\mathbf{0},\mathbf{0}, (1,1.1,1.2,1.3),\mathbf{0},\mathbf{0},\mathbf{0},\mathbf{0},\mathbf{0})$ for strength signal strength. In both of the cases, the first two $\mathbf{0}$ denote null vectors of length 3, the remaining are of length 2 and the last two are of length 5. In this case, also, the  predictors are generated in the same way as in
	Example 1 except for necessary dimension changes.

 \item Example 4. Now consider a situation when the sample size is large. Let sample size $n=500$ and $p=100$ covariates be grouped into 25 groups containing 4 covariates each. We assume only one group is active and the coefficients be,  $\boldsymbol{\beta}=((0.1, 0.2,0.3,0.4),\mathbf{0},\cdots,\mathbf{0},\mathbf{0})$ when the signal strength is weak and 
  $\boldsymbol{\beta}=((1.1, 1.2,1.3,1.4),\mathbf{0},\cdots,\mathbf{0},\mathbf{0})$ when the signal strength is strong. Here $\mathbf{0}$ is a null vector of length 4. The data generated scheme is similar to Example 1 except for necessary dimension changes.
\end{itemize}

For the next two examples, we consider the design matrices to be block-diagonal. Suppose $\mathbf{X}=(\mathbf{X_1},\mathbf{X_2},\cdots,\mathbf{X_G})$, which is generated as previously mentioned. From $\mathbf{X}$ a block-diagonal matrix $\mathbf{Z}=(\mathbf{Z_1},\mathbf{Z_2},\cdots,\mathbf{Z_G})$ is obtained as
\begin{align*} \label{4.1}
    \mathbf{Z_1} &= \mathbf{X_1} \\
    \mathbf{Z_2} &= (\mathbf{I_n}-P_{ \mathbf{Z_1}}) \mathbf{X_2} \\
     \mathbf{Z_3} &= (\mathbf{I_n}-P_{ \mathbf{Z_1}}-P_{ \mathbf{Z_2}}) \mathbf{X_3} \\ \tag{4.1}
     \cdots &\cdots \\
      \cdots &\cdots \\
      \mathbf{Z_G} &= (\mathbf{I_n}-P_{ \mathbf{Z_1}}-P_{ \mathbf{Z_2}}-\cdots -P_{ \mathbf{Z_{G-1}}}) \mathbf{X_G} ,
\end{align*}
where $\mathbf{I_n}$ denotes the identity matrix of order $n \times n$ and $P_{ \mathbf{Z_g}}= \mathbf{Z_g} ({\mathbf{Z}}^{\mathrm{T}}_g\mathbf{Z}_g)^{-1} {\mathbf{Z}}^{\mathrm{T}}_g$ denotes the projection matrix on the column space of $\mathbf{Z}_g$. Now, consider the situation where the data is simulated from the following true model:- 
$\mathbf{y}=\sum_{g=1}^{G}\mathbf{Z}_g\boldsymbol{\beta}_g+\boldsymbol{\epsilon}$, where $\mathbf{Z}_g$'s are generated from \eqref{4.1} and $ \boldsymbol{\epsilon} \sim \mathcal{N}_n(\mathbf{0},\sigma^2I_n)$.
\begin{itemize}
   \item  Example 5. We revisit Example 1 where the design matrix now becomes $\mathbf{Z}$ instead of $\mathbf{X}$, whose columns are generated using \eqref{4.1}.
   Here, sample size $n=50$ and $p=20$ covariates are grouped in $10$ groups containing $2$ covariates each. Let $\boldsymbol{\beta}=(\mathbf{0},\mathbf{0},\mathbf{0},\mathbf{0.2},\mathbf{0},\mathbf{0},\mathbf{0},\mathbf{0},\mathbf{0},\mathbf{0})$ where $\mathbf{0}$ and $\mathbf{0.2}$ are vectors of length 2, with all elements 0 or 0.2, respectively.
   
    \item Example 6. Now we revisit example 2 where the design matrix now becomes $\mathbf{Z}$ instead of $\mathbf{X}$, whose columns are generated using \eqref{4.1}. Here, $n=200$ and $p=40$ covariates are grouped into 10 groups containing 4 covariates each. We assume that only the first group is active and the coefficients be $\boldsymbol{\beta}=((1.1, 1.2,1.3,1.4),\mathbf{0},\mathbf{0},\cdots,\mathbf{0},\mathbf{0})$ where $\mathbf{0}$ is a null vector of length 4. 
\end{itemize}

\textbf{Case-2:-} Next we are interested in situations when $p>n$.  We define the jth predictor in group g as $X_{gj}=Z_{gj}+Z_g$, where $Z_g$ and $Z_{gj}$ are independent standard normal variates. Thus, predictors within a group are correlated with
a pairwise correlation of 0.5 while the predictors in different groups are independent. In the following examples, different choices of $(n,p)$ along with different signal strengths and different sparsity levels are to be considered.

\begin{itemize}
\item Example 7. First, we consider the case where $n = 50$ and $p =
100$. 100 predictors are grouped
into 25 groups of 4 covariates each. Let $\boldsymbol{\beta}=(\mathbf{0},\cdots,\mathbf{0},\mathbf{0.4},\mathbf{0},\cdots,\mathbf{0})$ when signal strength is weak, and
$\boldsymbol{\beta}=(\mathbf{0},\cdots,\mathbf{0},\mathbf{1.2},\mathbf{0},\cdots,\mathbf{0})$ for strong signal strength. Here the  $\mathbf{0}$ denotes a null vector of length 4.
 
    \item Example 8. Consider another example of large $p$ small $n$ problem with $n = 50$ and $p =
100$. 100 predictors are grouped
into 20 groups of 5 covariates each. For weak signal strength, the group coefficients are given by,
$\boldsymbol{\beta}=(\mathbf{0},\cdots,\mathbf{0},\mathbf{0.4},\mathbf{0.5}, (0.65,0.60,0.55,0.50,0.45),\mathbf{0},\cdots,\mathbf{0})$. On the other hand, when the signal strength is strong, we assume 
$\boldsymbol{\beta}=(\mathbf{1},\mathbf{0}, \cdots,\mathbf{0},\mathbf{1.2},\mathbf{1.4},\mathbf{0},\cdots,\mathbf{0})$. In each case, $\mathbf{0}$ denotes a null vector of length 5.

  \item Example 9. This is another example for large $p$ small $n$ problem with $n = 50$ and $p =
100$. Unlike previous situations, for the same combination of $(n,p)$, we consider two situations where the level of sparsity in the second situation is twice the former one.
Here 100 predictors are grouped
into 25 groups of 4 covariates each. First, we assume 4 groups to be active out of 25 groups. Let $\boldsymbol{\beta}=(\mathbf{0},\cdots,\mathbf{0},\mathbf{1.8},\mathbf{0.5}, (0.65,0.60,0.55,0.50),\mathbf{0},\cdots,\mathbf{0}, \mathbf{2.5})$ where the $\mathbf{0}$ denote null vector of length 4. Next, we assume 8 groups to be active out of 25 groups. Let $\boldsymbol{\beta}=(\mathbf{1.8},\mathbf{1.5},\mathbf{0},\cdots,\mathbf{0},\mathbf{1.8},\mathbf{1.5}, (0.65,0.60,0.55,0.50),\mathbf{0},\cdots,\mathbf{0}, \mathbf{2.5}, (0.4,0.45,0.50,0.55),\mathbf{2.5})$.
\end{itemize}

\subsection{Simulation Output}
\begin{table}[h!]
	\renewcommand\thetable{1}
	\caption{Mean True/False Positive Rate based on 100  replications(Example 1) } 
	
	\centering 
	\begin{tabular}{|c| c c c |c c c|} 
		\hline
  \multicolumn{7}{|c|}{Small group coefficients} \\
  \hline
		$$ &  $$ & $\rho=0$ & $$ & $$ & $\rho=0.5$ & \\ [0.5ex] 
		\hline 
	
	Prior & MP & FPR & TPR & MP & FPR & TPR \\
		\hline 
		Modified GH & 0.0162 & 0.0125 & 0.95 & 0.01226 & 0.0114 & 0.98 \\
		Usual GH  &  0.0181 & 0.0145 & 0.95 & 0.01226 & 0.0114 & 0.98 \\
		GSD-SSS & 0.0554 & 0.0115 & 0.55 &0.0499 & 0.0111 & 0.60 \\
		Modified GH(EB1) & 0.02148 & 0.0172 & 0.94 & 0.01832 & 0.0148 & 0.95 \\
  Modified GH(EB2) & 0.0172 & 0.0128 & 0.94 & 0.01498 & 0.0122 & 0.96\\
		Usual GH(EB)  & 0.0222 & 0.018 & 0.94 & 0.0206 & 0.0162 & 0.94 \\
            Group LASSO  & 0.232 & 0.250 & 0.93 & 0.231 & 0.250 & 0.94 \\
            Modified GH(FB) & 0.02013 & 0.0157 & 0.94 & 0.01859 & 0.0151 & 0.95 \\
		Usual GH(FB)  & 0.0276 & 0.0164 & 0.94 & 0.01788 & 0.0132 & 0.94 \\
  BGL-SS & 0.0799 & 0.0222 & 0.40 & 0.0994 & 0.0666 & 0.60 \\
  \hline
  \multicolumn{7}{|c|}{Large group coefficients} \\
  \hline
  $$ &  $$ & $\rho=0$ & $$ & $$ & $\rho=0.5$ & \\ [0.5ex] 
		\hline 
	Prior & MP & FPR & TPR & MP & FPR & TPR \\
		\hline 
		Modified GH & 0.01098 & 0.0122 & 1.00 & 0.0094 & 0.0105 & 1.00 \\
		Usual GH  &  0.01296 & 0.0144 & 1.00 & 0.0115 & 0.0128 & 1.00 \\
		GSD-SSS & 0.01026 & 0.0114 & 1.00 &0.0094 & 0.0104 & 1.00 \\
		Modified GH(EB1) & 0.01116 & 0.0124 & 1.00 & 0.0107 & 0.0119 & 1.00 \\
  Modified GH(EB2) &  0.01116 & 0.0124 & 1.00 & 0.0107 & 0.0119 & 1.00 \\
		Usual GH(EB)  & 0.01305 & 0.0145 & 1.00 & 0.0120 & 0.01334 & 1.00 \\
            Group LASSO  & 0.1998 & 0.22212 & 1.00 & 0.1858 & 0.20646 & 1.00 \\
            Modified GH(FB) & 0.01161 & 0.0129 & 1.00 & 0.0098 & 0.0109 & 1.00 \\
		Usual GH(FB)  & 0.01269 & 0.0141 & 1.00 & 0.01089 & 0.0121 & 1.00 \\
  BGL-SS & 0.01799 & 0.01999 & 1.00 & 0.01499 & 0.01665 & 1.00 \\
  \hline
	\end{tabular}
	\label{table:T-F-1} 
\end{table}
\begin{table}[h!]
	\renewcommand\thetable{2}
	\caption{Mean True/False Positive Rate based on 100  replications(Example 2) } 
	
	\centering 
	\begin{tabular}{|c| c c c |c c c|} 
		\hline
  \multicolumn{7}{|c|}{Small group coefficients} \\
  \hline
		$$ &  $$ & $\rho=0$ & $$ & $$ & $\rho=0.5$ & \\ [0.5ex] 
		\hline 
		Prior & MP & FPR & TPR & MP & FPR & TPR \\
		\hline 
			Modified GH& 0.0065
 & 0.005 & 0.98 & 0.00306 & 0.0034 & 1.000 \\
		Usual GH& 0.0121
 & 0.009 & 0.96 & 0.00531  & 0.0059 & 1.000 \\
		GSD-SSS & 0.0235
 & 0.005 & 0.81 &0.00712 & 0.0038 & 0.963 \\
		Modified GH(EB1)& 0.01085
 & 0.0065 & 0.95 & 0.00306 & 0.0034 & 1.000\\
 Modified GH(EB2)& 0.0084 & 0.006 & 0.97 & 0.00306 & 0.0034 & 1.000 \\
		Usual GH(EB)& 0.0113

 & 0.008 & 0.95 & 0.00531  & 0.0059 & 1.000 \\
            Group LASSO  & 0.3037
 & 0.333 & 0.96 & 0.1998 & 0.222 & 1.000 \\
 Modified GH(FB)& 0.0065
 & 0.005 & 0.98 & 0.0033 & 0.0037 & 1.000\\
		Usual GH(FB)& 0.0121

 & 0.009 & 0.96 & 0.00549  & 0.0061 & 1.000 \\
 BGL-SS & 0.0081 & 0.0062 & 0.98 & 0.00499 & 0.0055 & 1.000 \\
 \hline
  \multicolumn{7}{|c|}{Large group coefficients} \\
  \hline
  $$ &  $$ & $\rho=0$ & $$ & $$ & $\rho=0.5$ & \\ [0.5ex] 
		\hline 
		Prior & MP & FPR & TPR & MP & FPR & TPR \\
		\hline 
			Modified GH& 0.00405
 & 0.0045 & 1.00 & 0.00252 & 0.0028 & 1.000 \\
		Usual GH& 0.00441
 & 0.0049 & 1.00 & 0.00261  & 0.0029 & 1.000 \\
		GSD-SSS & 0.00432
 & 0.0048 & 1.00 &0.00279 & 0.0031 & 1.000 \\
		Modified GH(EB1)& 0.00522
 & 0.0058 & 1.00 & 0.00261 & 0.0029 & 1.000\\
 Modified GH(EB2) & 0.00459 & 0.0051 & 1.00 & 0.00252 & 0.0028 & 1.000\\
		Usual GH(EB)& 0.00648

 & 0.0072 & 1.00 & 0.00297  & 0.0033 & 1.000 \\
            Group LASSO  & 0.1998
 & 0.222 & 1.00 & 0.1998 & 0.222 & 1.000 \\
 Modified GH(FB)& 0.00459
 & 0.0051 & 1.00 & 0.00306 & 0.0034 & 1.000\\
		Usual GH(FB)& 0.00477

 & 0.0053 & 1.00 & 0.00279  & 0.0031 & 1.000 \\
 BGL-SS & 0.00499 & 0.0055 & 1.00 & 0.00297 & 0.0033 & 1.000\\
 \hline
	\end{tabular}
	\label{table:T-F-2} 
\end{table}

\begin{table}[h!]
	\renewcommand\thetable{3}
	\caption{Mean True/False Positive Rate based on 100  replications(Example 3) } 
	
	\centering 
	\begin{tabular}{|c| c c c |c c c|} 
		\hline
  \multicolumn{7}{|c|}{Small group coefficients} \\
  \hline
		$$ &  $$ & $\rho=0$ & $$ & $$ & $\rho=0.5$ & \\ [0.5ex] 
		\hline 
		Prior & MP & FPR & TPR & MP & FPR & TPR \\
		\hline 
		Modified GH & 0.0092 & 0.0034 & 0.95 & 0.0051 & 0.0029 & 0.98 \\
		Usual GH& 0.0118 & 0.0041 & 0.95 & 0.0082  & 0.0036 & 0.96 \\
		GSD-SSS & 0.026525 & 0.0046 & 0.82 & 0.01878 & 0.0029 & 0.87 \\
		Modified GH(EB1) & 0.0120 & 0.0038 & 0.93 & 0.0117 & 0.0034 & 0.93 \\
  Modified GH(EB2) & 0.0118 & 0.0035 & 0.92 & 0.0115 & 0.0031 & 0.93 \\
		Usual GH(EB) & 0.0127 & 0.0045 & 0.93 & 0.0122  & 0.0039 & 0.93 \\
             Group LASSO  & 0.1675 & 0.1812 & 0.94 & 0.1574 & 0.1691 & 0.94 \\
             Modified GH(FB) & 0.0119 & 0.0036 & 0.93 & 0.0129 & 0.0034 & 0.92 \\
		Usual GH(FB) & 0.0125 & 0.0043 & 0.93 & 0.0144  & 0.0036 & 0.91 \\
  BGL-SS & 0.0133 & 0.0038 & 0.92 & 0.0104 & 0.0034 & 0.94 \\
 \hline
  \multicolumn{7}{|c|}{Large group coefficients} \\
  \hline
  $$ &  $$ & $\rho=0$ & $$ & $$ & $\rho=0.5$ & \\ [0.5ex] 
		\hline 
		Prior & MP & FPR & TPR & MP & FPR & TPR \\
		\hline 
		Modified GH & 0.00113 & 0.0013 & 1.00 & 0.00105 & 0.0012 & 1.00 \\
		Usual GH & 0.0013125 & 0.0015 & 1.00 & 0.0013125 & 0.0015 & 1.00 \\
		GSD-SSS & 0.00113 & 0.0013 & 1.00 & 0.00113 & 0.0013 & 1.00 \\
		Modified GH(EB1) & 0.001225 & 0.0014 & 1.00 & 0.0013125 & 0.0015 & 1.00 \\
  Modified GH(EB2) & 0.00113 & 0.0013 & 1.00 & 0.00105 & 0.0012 & 1.00 \\
		Usual GH(EB) & 0.001225 & 0.0014 & 1.00 & 0.0014  & 0.0016 & 1.00 \\
             Group LASSO  & 0.147175 & 0.1682 & 1.00 & 0.13475 & 0.154 & 1.00 \\
             Modified GH(FB) & 0.0016625 & 0.0019 & 1.00 & 0.001225 & 0.0014 & 1.00 \\
		Usual GH(FB) & 0.001575 & 0.0018 & 1.00 & 0.00114  & 0.0013 & 1.00 \\
  BGL-SS & 0.00625 & 0.0072 & 1.000 & 0.003125 & 0.00357 & 1.000\\
  \hline
	\end{tabular}
	\label{table:T-F-3} 
\end{table}

\begin{table}[h!]
	\renewcommand\thetable{4}
	\caption{Mean True/False Positive Rate based on 100  replications(Example 4) } 
	
	\centering 
	\begin{tabular}{|c| c c c |c c c|} 
		\hline
  \multicolumn{7}{|c|}{Small group coefficients} \\
  \hline
		$$ &  $$ & $\rho=0$ & $$ & $$ & $\rho=0.5$ & \\ [0.5ex] 
		\hline 
		Prior & MP & FPR & TPR & MP & FPR & TPR \\
		\hline 
			Modified GH& 0.0029 & 0.0031 & 1.000 & 0.0023 & 0.0024 & 1.000 \\
		Usual GH& 0.0029 & 0.0031 & 1.000 & 0.0023  & 0.0024 & 1.000 \\
		GSD-SSS & 0.0029 & 0.0031 & 1.000 & 0.0023 & 0.0024 & 1.000 \\
		Modified GH(EB1)& 0.0035 & 0.0036 & 1.000 & 0.0029 & 0.0031 & 1.000\\
  Modified GH(EB2) & 0.0033 & 0.0034 & 1.000 & 0.0027 & 0.0028 & 1.000\\
		Usual GH(EB)& 0.0035 & 0.0036 & 1.000 & 0.0029  & 0.0031 & 1.000 \\
            Group LASSO  & 0.065 & 0.0681 & 1.000 & 0.0522 & 0.0544 & 1.000 \\
            Modified GH(FB)& 0.0029 & 0.0031 & 1.000 & 0.0024 & 0.0025 & 1.000\\
		Usual GH(FB)& 0.0031 & 0.0032 & 1.000 & 0.0024  & 0.0025 & 1.000 \\
  BGL-SS & 0.0032 & 0.0033 & 1.000 & 0.0024 & 0.0025 & 1.000 \\
 \hline
  \multicolumn{7}{|c|}{Large group coefficients} \\
  \hline  
  $$ &  $$ & $\rho=0$ & $$ & $$ & $\rho=0.5$ & \\ [0.5ex] 
		\hline 
		Prior & MP & FPR & TPR & MP & FPR & TPR \\
		\hline 
			Modified GH& 0.0027 & 0.0028 & 1.000 & 0.0023 & 0.0024 & 1.000 \\
		Usual GH& 0.0027 & 0.0028 & 1.000 & 0.0023  & 0.0024 & 1.000 \\
		GSD-SSS & 0.0027 & 0.0028 & 1.000 & 0.0023 & 0.0024 & 1.000 \\
		Modified GH(EB1)& 0.0025 & 0.0026 & 1.000 & 0.0029 & 0.0031 & 1.000\\
  Modified GH(EB2)& 0.0025 & 0.0026 & 1.000 & 0.0029 & 0.0031 & 1.000\\
		Usual GH(EB)& 0.0025 & 0.0026 & 1.000 & 0.0029  & 0.0031 & 1.000 \\
            Group LASSO  & 0.0570 & 0.0594 & 1.000 & 0.0522 & 0.0544 & 1.000 \\
            Modified GH(FB)& 0.0029 & 0.0031 & 1.000 & 0.0028 & 0.0029 & 1.000\\
		Usual GH(FB)& 0.0031 & 0.0032 & 1.000 & 0.0028  & 0.0029 & 1.000 \\
  BGL-SS & 0.0024 & 0.0025 & 1.000 & 0.0020 & 0.0021 & 1.000 \\
  \hline
	\end{tabular}
	\label{table:T-F-5} 
\end{table}

\begin{table}[h!]
	\renewcommand\thetable{5}
	\caption{Mean True/False Positive Rate based on 100  replications(Example 5) } 
	
	\centering 
	\begin{tabular}{|c| c c c |c c c|} 
		\hline
		$$ &  $$ & $\rho=0$ & $$ & $$ & $\rho=0.5$ & \\ [0.5ex] 
		\hline 
	
	Prior & MP & FPR & TPR & MP & FPR & TPR \\
		\hline 
		Modified GH & 0.0157 & 0.0119 & 0.95 & 0.01226 & 0.0114 & 0.98 \\
		Usual GH  &  0.0174 & 0.01138 & 0.95 & 0.01226 & 0.0114 & 0.98 \\
		GSD-SSS & 0.05226 & 0.0114 & 0.58 &0.0499 & 0.0111 & 0.60 \\
		Modified GH(EB1) & 0.02176 & 0.0164 & 0.93 & 0.01832 & 0.0148 & 0.95 \\
  Modified GH(EB2) &0.01825 & 0.0125 & 0.93 & 0.01771 & 0.0119& 0.93\\
		Usual GH(EB)  & 0.02302 & 0.0178 & 0.93 & 0.0206 & 0.0162 & 0.94 \\
            Group LASSO  & 0.3064 & 0.333 & 0.94 & 0.231 & 0.250 & 0.94 \\
            Modified GH(FB) & 0.01816 & 0.0124 & 0.93 & 0.01859 & 0.0151 & 0.95 \\
		Usual GH(FB)  & 0.01969 & 0.0141 & 0.93 & 0.01788 & 0.0132 & 0.94 \\
         BGL-SS & 0.075 & 0.01887 & 0.42 & 0.0601 & 0.0222 & 0.6\\
         \hline
	\end{tabular}
	\label{table:T-F-15} 
\end{table}

\begin{table}[h!]
	\renewcommand\thetable{6}
	\caption{Mean True/False Positive Rate based on 100  replications(Example 6) } 
	
	\centering 
	\begin{tabular}{|c| c c c |c c c|} 
		\hline
		$$ &  $$ & $\rho=0$ & $$ & $$ & $\rho=0.5$ & \\ [0.5ex] 
		\hline 
		Prior & MP & FPR & TPR & MP & FPR & TPR \\
		\hline 
			Modified GH& 0.00351
 & 0.0039 & 1.00 & 0.00252 & 0.0028 & 1.000 \\
		Usual GH& 0.00369
 & 0.0041 & 1.00 & 0.00261  & 0.0029 & 1.000 \\
		GSD-SSS & 0.00369
 & 0.0041 & 1.00 &0.00279 & 0.0031 & 1.000 \\
		Modified GH(EB1)& 0.00396
 & 0.0044 & 1.00 & 0.00261 & 0.0029 & 1.000\\
 Modified GH(EB2) & 0.00369 & 0.0041 & 1.00 & 0.00261 & 0.0029 & 1.000\\
		Usual GH(EB)& 0.00405

 & 0.0045 & 1.00 & 0.00297  & 0.0033 & 1.000 \\
            Group LASSO  & 0.1998
 & 0.222 & 1.00 & 0.1998 & 0.222 & 1.000 \\
 Modified GH(FB)& 0.00441
 & 0.0049 & 1.00 & 0.00306 & 0.0034 & 1.000\\
		Usual GH(FB)& 0.00468

 & 0.0052 & 1.00 & 0.00279  & 0.0031 & 1.000 \\
 BGL-SS & 0.00401 & 0.00444 & 1.00 & 0.00203 & 0.0022 & 1.000\\
 \hline
	\end{tabular}
	\label{table:T-F-16} 
\end{table}

\begin{table}[h!]
	\renewcommand\thetable{7}
	\caption{Mean True/False Positive Rate based on 100  replications(Example 7) } 
	
	\centering 
	\begin{tabular}{|c|c c c|} 
		\hline
  \multicolumn{4}{|c|}{Small group coefficients} \\
  \hline
		Prior & MP & FPR & TPR  \\
		\hline 
			Modified GH  & 0.0115 & 0.01138 & 0.986 \\
		Usual GH &  0.0118  & 0.01135 & 0.978 \\
		GSD-SSS &  0.04274 & 0.0349 & 0.769 \\
		Modified GH(EB1) &  0.0137 & 0.0133 & 0.975 \\
  Modified GH(EB2) & 0.0136 & 0.0134 & 0.981 \\
		Usual GH(EB)  &  0.0165 & 0.0157 & 0.964 \\
            Group LASSO  & 0.178 & 0.183 & 0.950 \\
            Modified GH(EB) &  0.0123 & 0.0121 & 0.984 \\
		Usual GH(EB)  &  0.0118 & 0.01135 & 0.977 \\
  BGL-SS & 0.012 & 0.01042 & 0.754\\
 \hline
  \multicolumn{4}{|c|}{Large group coefficients} \\
  \hline
  Prior & MP & FPR & TPR  \\
		\hline 
			Modified GH  & 0.0108 & 0.01131 & 1.00 \\
		Usual GH &  0.01422  & 0.01482 & 1.00 \\
		GSD-SSS &  0.0109 & 0.0114 & 1.00 \\
		Modified GH(EB1) &  0.012672 & 0.0132 & 1.00 \\
  Modified GH(EB2) & 0.0124 & 0.0129 & 1.00\\
		Usual GH(EB)  &  0.01434 & 0.0149 & 1.00 \\
            Group LASSO  & 0.120 & 0.125 & 1.00 \\
            Modified GH(FB) &  0.0114 & 0.0119 & 1.00 \\
		Usual GH(FB)  &  0.0117 & 0.0122 & 1.00 \\
  BGL-SS & 0.0112 & 0.01166 & 1.00 \\
  \hline
	\end{tabular}
	\label{table:T-F-4} 
\end{table}

\begin{table}[h!]
	\renewcommand\thetable{8}
	\caption{Mean True/False Positive Rate based on 100  replications(Example 8) } 
	
	\centering 
	\begin{tabular}{|c|c c c|} 
		\hline
 \multicolumn{4}{|c|}{Small group coefficients} \\
  \hline
	Prior & MP & FPR & TPR  \\
		\hline 
			Modified GH  & 0.0137 & 0.01404 & 0.988 \\
		Usual GH &  0.0142  & 0.01389 & 0.984 \\
		GSD-SSS &  0.04446 & 0.01384 & 0.782 \\
		Modified GH(EB1) &  0.01499 & 0.01411 & 0.980 \\
  Modified GH(EB2) & 0.01437 & 0.01408 & 0.984\\
		Usual GH(EB)  &  0.0158 & 0.01409 & 0.974 \\
            Group LASSO  & 0.18659 & 0.21511 & 0.975 \\
            Modified GH(FB) &  0.0131 & 0.01388 & 0.991 \\
		Usual GH(FB)  &  0.0137 & 0.01402 & 0.988 \\
  BGL-SS & 0.01344 & 0.01441 & 0.992\\	
  \hline
  \multicolumn{4}{|c|}{Large group coefficients} \\
  \hline
  Prior & MP & FPR & TPR  \\
		\hline 
			Modified GH  & 0.01114 & 0.01311 & 1.00 \\
		Usual GH &  0.01271  & 0.01495 & 1.00 \\
		GSD-SSS &  0.01097 & 0.01291 & 1.00 \\
		Modified GH(EB1) &  0.012061 & 0.0142 & 1.00 \\
  Modified GH(EB2) & 0.011475 & 0.0135 & 1.00\\
		Usual GH(EB)  &  0.01292 & 0.0152 & 1.00 \\
            Group LASSO  & 0.1133 & 0.133 & 1.00 \\
            Modified GH(FB) &  0.01293 & 0.01521 & 1.00 \\
		Usual GH(FB)  &  0.011798 & 0.01388 & 1.00 \\
  BGL-SS & 0.012 & 0.01417 & 1.00\\
 \hline
	\end{tabular}
	\label{table:T-F-12} 
\end{table}

\begin{table}[h!]
	\renewcommand\thetable{9}
	\caption{Mean True/False Positive Rate based on 100  replications(Example 9) } 
	
	\centering 
	\begin{tabular}{|c|c c c|} 
		\hline
  \multicolumn{4}{|c|}{Number of active groups=4} \\
  \hline
		Prior & MP & FPR & TPR  \\
  \hline	
Modified GH  & 0.0145 & 0.0204 & 0.998 \\
  Usual GH &  0.0145  & 0.0194 & 0.996 \\
		GSD-SSS &  0.0474 & 0.0183 & 0.892 \\
		Modified GH(EB1) &  0.0177 & 0.0232 & 0.994 \\
  Modified GH(EB2) & 0.01642 & 0.0218 & 0.995\\
		Usual GH(EB)  &  0.0176 & 0.0231 & 0.994 \\
            Group LASSO  & 0.1247 & 0.1764 & 0.985 \\
            Modified GH(FB) &  0.0131 & 0.0184 & 0.998 \\
		Usual GH(FB)  &  0.0139 & 0.0181 & 0.995 \\
  BGL-SS & 0.0161 & 0.0199 & 0.992 \\
		\hline 
\multicolumn{4}{|c|}{Number of active groups=8} \\
  \hline
  Prior & MP & FPR & TPR  \\
  \hline
  		Modified GH  & 0.01112 & 0.01209 & 0.994 \\
		Usual GH &  0.01146  & 0.01194 & 0.991 \\
		GSD-SSS &  0.03374 & 0.01178 & 0.851 \\
		Modified GH(EB1) &  0.01195 & 0.01213 & 0.989 \\
  Modified GH(EB2) & 0.01182 & 0.01198 & 0.989\\
		Usual GH(EB)  &  0.0126 & 0.01218 & 0.985 \\
            Group LASSO  & 0.1691 & 0.1978 & 0.981 \\
            Modified GH(FB) &  0.01166 & 0.01179 & 0.989 \\
		Usual GH(FB)  &  0.01178 & 0.01174 & 0.988 \\
  BGL-SS & 0.0125 & 0.0121 & 0.986 \\
  \hline
	\end{tabular}
	\label{table:T-F-13} 
\end{table}

\subsection{Interpretation of simulation results}
In each of the above examples, we computed the probability of misclassification (MP), the false positive rate (FPR) and the true positive rate (TPR) for each of the methods mentioned above. Few observations can be made from these tables.\\
\begin{itemize}
\item With an increase in the magnitude of correlation among the covariates within a group, MP and FPR decrease. This indicates that when covariates form a group with a nonsignificant amount of dependence among themselves, the right decision within a group for a particular regressor also influences the same for the remaining individuals forming the group. 
\item   Wang and Leng (2008) \cite{wang2008note} suspected that, like the LASSO, the group LASSO also may have the drawback of inconsistency in variable selection. Xu and Ghosh (2014) \cite{xu2015bayesian} proved this property in their paper. This is also reflected in our simulation setting, as in all cases, irrespective of the value of $\rho$, the LASSO group tends to select more variables and produces a higher FPR than the remaining methods.
    \item These tables also suggest that the Modified Group Horseshoe has slightly lower MP and FPR compared to the normal one, irrespective of the choice of $\rho$ in all cases. When the signal strength (i.e. coefficient of the active groups) is weak,
    Modified Group Horseshoe produces much better results than its counterparts in two groups due to Yang and Narisetty (2020) \cite{yang2020consistent} and Xu and Ghosh (2015) \cite{xu2015bayesian}
    in terms of MP, FPR, and TPR in examples 1-3 and 5, where the sample size is small ($n=50$) or moderate $(n=200)$.
\end{itemize}

\begin{itemize}
    \item Example 3 is different from the remaining ones, as the group size is different in this case. Although the group size is not used in the prior distribution of the group coefficients in any of these methods, our half-thresholding rule successfully captures the truth and hence produces better results than those of GSD-SSS and BGL-SS.
\end{itemize}
\begin{itemize}
    \item Example 4 shows that when the signal strength is weak, procedures due to Yang and Narisetty (2020) \cite{yang2020consistent} and  Xu and Ghosh (2015) \cite{xu2015bayesian} can produce similar results to that obtained by ours, only if the sample size is large ($n=500$).
    \item Examples 1-4 and 6 clarify that when signal strength is strong, regardless of sample size $n$, the performances of BGL-SS and GSD-SSS are comparable with those of our decision rule. 
   
    \item In examples 5 and 6, we consider two cases where the design matrix is block-diagonal. Since the data generation scheme is similar to those of Examples 1 and 2,
    from the previous results, we suspect that our method would provide better results than that of Yang and Narisetty (2020) \cite{yang2020consistent} and Xu and Ghosh (2015) \cite{xu2015bayesian} when the signal strength is weak and would produce comparable results when the signal strength is strong. Tables \ref{table:T-F-15} and \ref{table:T-F-16} confirm this.
    \item Examples 7-9 show that the rule \eqref{eq:2.5} will also work even if $p>n$. In this case, too, MP and TPR corresponding to our decision rule are much better than those of GSD-SSS and better than BGL-SS for weak signal strength and produce compared results for strong signal strength.
     \item Example 9 deals with situations when there is a mixture of weak and strong signal strengths. In these cases also, our proposed method outperforms Yang and Narisetty (2020) \cite{yang2020consistent} and yields better results than that of Xu and Ghosh (2015) \cite{xu2015bayesian}.
     \item In general, it can be unambiguously stated that our decision rule \eqref{eq:2.5} provides excellent results in terms of group selection even if the design matrix is not block-diagonal. 
\end{itemize}
    \section{Real data analysis}

    \label{sec-3.5}
    In this section, we compare the performance of our variable selection  and estimation rules with some existing methods when applied to a real dataset. 
   Here we consider two datasets, both of them are available in R. \\ 
\hspace*{0.5cm}\textbf{Diabetes dataset.} This dataset was used initially by \cite{efron2004large}, which is available in R package \textit{care}. This dataset was previously studied by \cite{tang2018bayesian}.
It contains ten baseline variables (predictors): age, sex,
body mass index (BMI), average blood pressure (BP), six blood serum
measurements (TC, LDL, HDL, TCH, LTH, GLU), and a quantitative measure
of disease progression one year after baseline (response) for 442 diabetes
patients. The baseline variables are standardized to have zero mean and
unit $l_2$ norm. The response variable is centered to have zero mean. The different
methods discussed in previous simulations were applied and the results are presented
in Table \ref{table:T-F-18}.
	\begin{table}[htbp]
		\renewcommand\thetable{10}
		\caption{Performance of different methods in Diabetes dataset} 
		
		\centering 
		\begin{tabular}{|c| c c c c c c c c 
 c c|} 
			\hline
            Method	& Age	& Gender &	Bmi	& Bp	&TC	&LDL	&HDL	&TCH	&LTH	&GLU \\
            \hline
Group SCAD	&0	&1	&1	&1	&1	&1	&0	&1	&1	&0\\
Group MCP	&0	&1	&1	&1	&1	&1	&0	&1	&1	&0\\
Group LASSO	&0	&1	&1	&1	&1	&1	&0	&1	&1	&0\\
BGL-SS	&0	&1	&1	&1	&1	&0	&1	&1	&1	&1\\
GSD-SS	&0	&1	&1	&1	&1	&0	&1	&0	&1	&0\\
Modified GH(EB)	  &0	&1	&1	&1	&1	&0	&1	&0	&1	&0\\
Modified GH(FB)	  &0	&1	&1	&1	&1	&0	&1	&0	&1	&0\\
			\hline 
			
		\end{tabular}
		\label{table:T-F-18} 
	\end{table}
It was observed that five variables (Gender, BMI, BP, HDL, LTH) are selected by all the methods.
These five variables are the first ones that enter the regression
equation in \cite{efron2004large}. Among all the methods, BGL-SS selects the
highest number of variables. The methods based on frequentist approaches also select LDL, but not HDL, On the other hand, all Bayesian approaches select HDL, in addition to five common variables, but not  choose TCH.\\
\hspace*{0.5cm} Next, we are interested in the prediction problem. For this,
the 442 observations in the dataset are divided into a training set and a test set. The
training set has 280 observations and the test set has 162 observations, as same as \cite{tang2018bayesian}. The methods used in the diabetes example are applied for the training data
and the mean squared prediction error (MSPE) is estimated based on the test
data for each method. The results are presented in Table \ref{table:T-F-19}. It ensures that in terms of estimated MSPE, our method yields results comparable to those of the existing methods in this literature.

\begin{table}[h!]
		\renewcommand\thetable{11}
		\caption{Mean squared prediction error corresponding to different methods for Diabetes dataset} 
		
		\centering 
		\begin{tabular}{|c| c|} 
			\hline
            Method & MSPE\\
			\hline 
			Group SCAD	&0.484159\\
Group MCP	&0.486948\\
Group LASSO	&0.486289\\
BGL-SS	&0.489933\\
GSD-SS	&0.484794\\
Modified GH(EB)	&0.484990\\
Modified GH(FB)& 0.485094\\
			\hline
			
		\end{tabular}
		\label{table:T-F-19} 
	\end{table}
{\textbf{Birth weight data.}} We consider the birth weight dataset from \cite{hosmer1989applied} with the group
methods, which is available in the R package \textit{grpreg}. This dataset was previously analyzed by \cite{yuan2006model}.
The birth weight dataset records the birth weights of 189 babies and 16 predictors
concerning the mother. These 16 covariates are divided into 8 groups named as mother’s age in years, 
 mother’s weight in pounds at the last menstrual period, 
mother’s
race, 
smoking status during pregnancy, 
number of previous
premature labours, 
history of hypertension 
presence of uterine
irritability,
and number of physician visits during the first trimester
. 
The data were collected at Baystate Medical Center, Springfield, Massachusetts, during
1986. For the prediction problem,
the 189 observations in the dataset are divided into a training and a test part. The
training part has 126 observations and the test part has 63 observations. The methods used in the diabetes example are applied to the training data
and the mean squared prediction error (MSPE) is calculated based on the test
data for each method. The results are presented in Table \ref{table:T-F-20}. The results indicate that in terms of estimated MSPE, our method yields significantly better result than those of the existing ones. We have observed that, for moderate birth-weights, our method has much better performance than the remaining ones. For the remaining cases, it produces comparable results.

\begin{table}[h!]
		\renewcommand\thetable{12}
		\caption{Mean squared prediction error corresponding to different methods for Birth weight dataset } 
		
		\centering 
		\begin{tabular}{|c| c|} 
			\hline
            Method & MSPE($\times 10^{-2}$) \\
			\hline 
Group SCAD	&9.668395\\
Group MCP	&9.607762\\
Group LASSO	&9.565219\\
BGL-SS	&9.564448\\
GSD-SS	&9.208861\\
Modified GH(EB)	&8.564448\\
Modified GH(FB)	&8.526835\\

			\hline
			
		\end{tabular}
		\label{table:T-F-20} 
	\end{table}

\section{Proofs}
\label{sec-5}
We first state and prove Lemmas \ref{lem1} to \ref{lem4} which are crucial to proving the main theorems of our work.
\begin{lemma}
	\label{lem1}
	Let $L$ be a nonnegative, measurable, slowly varying function defined over an interval unbounded to the right. Then the following results hold.\vspace{2mm}
	
	\begin{enumerate}[label=(\arabic*)]
	    \item \label{L-1.1}  $L^{\alpha}$ is slowly varying for all $\alpha \in \mathbb{R}$.\vspace{2mm}
	    
	\item \label{L-1.2} $\frac{\log L(x)}{\log x} \to 0$ as $x\to \infty$.\vspace{2mm}
	
	\item \label{L-1.3} For every $\alpha >0, \ x^{-\alpha} L(x) \to 0$ and $x^{\alpha}L(x) \to \infty$ as $x\to \infty$.\vspace{2mm}
	
	\item \label{L-1.4} For $ \alpha<-1,-\frac{\int_{x}^{\infty}t^{\alpha}L(t)dt}{x^{\alpha+1}L(x)} \to \frac{1}{\alpha+1}$ as $x\to \infty$.\vspace{2mm}
	
	\item \label{L-1.5} There exists a global constant $ A_0>0$ such that, for any $\alpha>-1,$ $\frac{\int_{A_0}^{x}t^{\alpha}L(t)dt}{x^{\alpha+1}L(x)} \to \frac{1}{\alpha+1}$ as $x\to \infty.$	
	\end{enumerate}

	
\end{lemma}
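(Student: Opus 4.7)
The plan is to treat the five parts as classical consequences of Karamata's theory of slowly varying functions; specifically, I will lean on (i) the defining convergence $L(\alpha x)/L(x)\to 1$, (ii) the Karamata representation $L(x)=c(x)\exp\!\left(\int_{a}^{x}\varepsilon(t)/t\,dt\right)$ with $c(x)\to c>0$ and $\varepsilon(t)\to 0$, and (iii) Potter's bound, which says that for every $\delta>0$ there exist $A,x_0$ such that $L(y)/L(x)\le A\max\{(y/x)^{\delta},(y/x)^{-\delta}\}$ for all $x,y\ge x_0$. These three ingredients are enough to handle every part uniformly.

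For part \ref{L-1.1}, I would observe directly that $L^{\alpha}(\beta x)/L^{\alpha}(x)=(L(\beta x)/L(x))^{\alpha}\to 1^{\alpha}=1$, so $L^{\alpha}$ is slowly varying. For part \ref{L-1.2}, I would use the Karamata representation, write $\log L(x)=\log c(x)+\int_{a}^{x}\varepsilon(t)/t\,dt$, and estimate: given $\delta>0$, pick $T$ with $|\varepsilon(t)|<\delta$ for $t\ge T$; then $|\log L(x)|/\log x\le O(1/\log x)+\delta(1-\log T/\log x)\to \delta$, and $\delta$ is arbitrary. Part \ref{L-1.3} then follows from \ref{L-1.2}: write $\log(x^{-\alpha}L(x))=(\log x)\bigl(-\alpha+\log L(x)/\log x\bigr)\to -\infty$ for $\alpha>0$, and analogously for $x^{\alpha}L(x)$.

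For parts \ref{L-1.4} and \ref{L-1.5}, the uniform strategy is a substitution $t=xs$ followed by dominated convergence. In part \ref{L-1.4}, I would write
\[
\frac{\int_{x}^{\infty}t^{\alpha}L(t)\,dt}{x^{\alpha+1}L(x)}=\int_{1}^{\infty}s^{\alpha}\,\frac{L(xs)}{L(x)}\,ds,
\]
note that the integrand converges pointwise to $s^{\alpha}$ as $x\to\infty$ (since $L$ is slowly varying), and dominate by Potter's bound: for $\delta$ chosen small enough that $\alpha+\delta<-1$, we have $s^{\alpha}L(xs)/L(x)\le A\,s^{\alpha+\delta}$ on $[1,\infty)$, which is integrable. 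Hence by dominated convergence the ratio tends to $\int_{1}^{\infty}s^{\alpha}\,ds=-1/(\alpha+1)$, giving the claim after negation. Part \ref{L-1.5} is entirely parallel: the substitution yields
\[
\frac{\int_{A_{0}}^{x}t^{\alpha}L(t)\,dt}{x^{\alpha+1}L(x)}=\int_{A_{0}/x}^{1}s^{\alpha}\,\frac{L(xs)}{L(x)}\,ds,
\]
and for $\delta$ with $\alpha-\delta>-1$, Potter's bound gives a dominating integrable envelope on $(0,1]$, so the integral converges to $\int_{0}^{1}s^{\alpha}\,ds=1/(\alpha+1)$; here $A_{0}$ is chosen beyond the threshold $x_0$ from Potter's bound so the estimate is legitimate.

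The main obstacle is the dominated-convergence step in parts \ref{L-1.4} and \ref{L-1.5}: the pointwise convergence $L(xs)/L(x)\to 1$ is automatic, but I must exhibit an integrable envelope over a noncompact range of $s$. Potter's bound is exactly the right tool, yet it must be invoked carefully so that the exponent $\delta$ is small enough to preserve integrability ($\alpha+\delta<-1$ near infinity in \ref{L-1.4}, and $\alpha-\delta>-1$ near zero in \ref{L-1.5}). Everything else is essentially bookkeeping, and the five parts share enough structure that a single application of the Karamata representation and Potter's bound at the outset would streamline the whole proof.
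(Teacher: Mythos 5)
Your proof is correct, and it is essentially the paper's own route: the paper proves Lemma \ref{lem1} simply by citing Bingham, Goldie and Teugels (1987), and your argument — part \ref{L-1.1} directly from the definition, parts \ref{L-1.2}--\ref{L-1.3} via the Karamata representation, and parts \ref{L-1.4}--\ref{L-1.5} via the substitution $t=xs$, Potter's bound with $\delta$ chosen so that $\alpha+\delta<-1$ (resp.\ $\alpha-\delta>-1$) and dominated convergence — is precisely the classical proof given in that reference. You also correctly handled the one genuine subtlety, namely taking $A_0$ at least as large as the Potter threshold $x_0$ so that the envelope is valid down to $s=A_0/x$.
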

\begin{proof}
See Bingham et al. (1987) \cite{bingham_goldie_teugels_1987}.
\end{proof}  
\begin{lemma}
	\label{lem2}
	Let $L:(0,\infty) \to (0,\infty) $ be a measurable and integrable function such that for fixed $a>0$, $\int_{0}^{\infty} t^{-a-1}L(t)dt=K^{-1}$, with $K \in (0,\infty)$. Assume $\tau_n \to 0$ as $n \to \infty$. Then \begin{equation*}
		\int_{0}^{1}	u^{a+\frac{m_g}{2}-1}(1-u)^{-a-1}L\left(\frac{1}{\tau^2_n}(\frac{1}{u}-1)\right)du=K^{-1}(\tau^2_n)^{-a}(1+o(1)) \hspace{0.05cm},
	\end{equation*}
	where the $o(1)$ term is such that $\lim_{n \to \infty}o(1)=0.$
\end{lemma}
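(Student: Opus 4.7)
The natural attack is a change of variables that moves the singular point at $u=1$ out to $\infty$ and trades it for the given global integral of $t^{-a-1}L(t)$. My plan is to set
$$t \;=\; \frac{1}{\tau_n^2}\!\left(\frac{1}{u}-1\right), \qquad \text{so that } u=\frac{1}{1+\tau_n^2 t},\quad 1-u=\frac{\tau_n^2 t}{1+\tau_n^2 t},$$
which maps $u\in(0,1)$ bijectively onto $t\in(0,\infty)$ with $du=-\tau_n^2(1+\tau_n^2 t)^{-2}\,dt$. Substituting these expressions into each factor of the integrand is a purely algebraic step: the powers $u^{a+m_g/2-1}$ and $(1-u)^{-a-1}$ become powers of $(1+\tau_n^2 t)$ and $t$, and the factor $\tau_n^{-2(a+1)}$ coming from $(1-u)^{-a-1}$ partially cancels with the $\tau_n^2$ from the Jacobian. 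After bookkeeping, the integral rewrites as
$$\int_{0}^{1} u^{a+\frac{m_g}{2}-1}(1-u)^{-a-1} L\!\left(\tfrac{1}{\tau_n^2}(\tfrac{1}{u}-1)\right)du \;=\; \tau_n^{-2a}\int_{0}^{\infty} t^{-a-1} L(t)\,(1+\tau_n^2 t)^{-m_g/2}\,dt.$$

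The second step is a dominated convergence argument. For every fixed $t\in(0,\infty)$, $(1+\tau_n^2 t)^{-m_g/2}\to 1$ as $\tau_n\to 0$; and since $m_g\ge 1$, the factor $(1+\tau_n^2 t)^{-m_g/2}$ is bounded by $1$, so the integrand is uniformly dominated by $t^{-a-1}L(t)$, which is integrable with integral $K^{-1}$ by hypothesis. Hence
$$\int_{0}^{\infty} t^{-a-1} L(t)\,(1+\tau_n^2 t)^{-m_g/2}\,dt \;\longrightarrow\; \int_{0}^{\infty} t^{-a-1} L(t)\,dt \;=\; K^{-1},$$
and multiplying by $\tau_n^{-2a}$ yields exactly $K^{-1}(\tau_n^2)^{-a}(1+o(1))$, as required.

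There is essentially no serious obstacle here: the substitution is the only non-routine move, and the integrability assumption $\int_0^\infty t^{-a-1}L(t)\,dt=K^{-1}<\infty$ delivers the dominating function for free. The only point to be a bit careful about is verifying the algebra in the substitution step (checking that the exponents of $(1+\tau_n^2 t)$ coming from $u^{a+m_g/2-1}$, $(1-u)^{-a-1}$, and the Jacobian combine to $-m_g/2$), and noting that one does not need any slowly-varying property of $L$ at this stage—$L$ is merely a nonnegative measurable weight whose weighted integral is finite. The slow variation of $L$ enters only elsewhere in the paper, via Lemma~\ref{lem1}.
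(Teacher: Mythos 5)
Your proof is correct: the substitution $t=\frac{1}{\tau_n^2}(\frac{1}{u}-1)$ does yield exactly $\tau_n^{-2a}\int_0^\infty t^{-a-1}L(t)(1+\tau_n^2 t)^{-m_g/2}\,dt$, and dominated convergence (with dominating function $t^{-a-1}L(t)$ and $m_g\geq 1$) finishes the argument. This is essentially the same route as the paper, which simply defers to Lemma 5 of Ghosh et al. (2016), whose proof is this very change of variables followed by dominated convergence.
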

\begin{proof}
 The proof follows using the same set of arguments used to establish Lemma 5 of Ghosh et al. (2016) \cite{ghosh2016asymptotic}.
\end{proof}

\begin{lemma}
	\label{lem3}
Consider the hierarchical framework of \eqref{eq:2.1} where the local shrinkage parameters are modeled with the class of priors given by \eqref{eq:2.2}. Suppose $\tau_n \to 0$ as $n \to \infty$. Then for any given $a \in (0,1)$, there exists $A_0 \geq 1$ such that
\begin{equation*}
	E(1-\kappa_g\mid\tau_n,\sigma^2,\mathcal{D}) \leq  \frac{A_0K}{a(1-a)} (\tau^2_n) ^{a} L\left(\frac{1}{\tau^2_n}\right)\exp\left(\frac{n\widehat{\boldsymbol{\beta}}_g^{\mathrm{T}}Q_{n,g}\widehat{\boldsymbol{\beta}}_g}{2 \sigma^2}\right)(1+o(1)).
\end{equation*}
Assume that the slowly varying function $L(\cdot) $ satisfies  \hyperlink{assumption1}{Assumption 1} for some $a \geq 1$. Then
\begin{equation*}
	E(1-\kappa_g\mid\tau_n,\sigma^2,\mathcal{D}) \leq  \frac{KM}{a} \tau_n \exp\left(\frac{n\widehat{\boldsymbol{\beta}}_g^{\mathrm{T}}Q_{n,g}\widehat{\boldsymbol{\beta}}_g}{2 \sigma^2}\right)(1+o(1)).
\end{equation*}
The terms $o(1)$ in both inequalities above tend to zero as $n \to \infty$.
\end{lemma}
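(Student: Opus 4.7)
\textbf{Proof proposal for Lemma \ref{lem3}.}

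Starting from the posterior \eqref{eq:3.1}, write
\[
E(1-\kappa_g \mid \tau_n, \sigma^2, \mathcal{D}) \;=\; \frac{N(\tau_n)}{D(\tau_n)},
\]
where, with $t_g := n\,\widehat{\boldsymbol{\beta}}_g^{\mathrm{T}} \mathbf{Q}_{n,g}\widehat{\boldsymbol{\beta}}_g/(2\sigma^2)$,
\begin{align*}
N(\tau_n) &= \int_0^1 \kappa^{a+\frac{m_g}{2}-1}(1-\kappa)^{-a}\,L\!\Bigl(\tfrac{1}{\tau_n^2}\bigl(\tfrac{1}{\kappa}-1\bigr)\Bigr)\,e^{-\kappa t_g}\,d\kappa,\\
D(\tau_n) &= \int_0^1 \kappa^{a+\frac{m_g}{2}-1}(1-\kappa)^{-a-1}\,L\!\Bigl(\tfrac{1}{\tau_n^2}\bigl(\tfrac{1}{\kappa}-1\bigr)\Bigr)\,e^{-\kappa t_g}\,d\kappa.
\end{align*}
The plan is to decouple the $e^{-\kappa t_g}$ factor in each integral by the trivial inequalities $e^{-\kappa t_g}\ge e^{-t_g}$ (since $\kappa\in(0,1)$) in $D$ and $e^{-\kappa t_g}\le 1$ in $N$. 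The factor $e^{-t_g}$ pulled out of $D$ then reappears as $e^{t_g}$ in the final ratio, accounting for the $\exp(t_g)$ appearing in the stated bounds.

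For the denominator, after extracting $e^{-t_g}$ the remaining integral is exactly of the form covered by Lemma \ref{lem2}, yielding
$D(\tau_n)\ge e^{-t_g}K^{-1}(\tau_n^2)^{-a}\bigl(1+o(1)\bigr)$. The numerator is then handled by the substitution $u = \tfrac{1}{\tau_n^2}(\tfrac{1}{\kappa}-1)$ (so that $\kappa=(1+\tau_n^2 u)^{-1}$ and $1-\kappa=\tau_n^2 u/(1+\tau_n^2 u)$), which gives
\[
N(\tau_n)\;\le\;(\tau_n^2)^{1-a}\int_0^\infty (1+\tau_n^2 u)^{-(m_g/2+1)}\,u^{-a}\,L(u)\,du.
\]

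For the regime $a\in(0,1)$, I would split the last integral at $u=A_0/\tau_n^2$ with $A_0$ as in Lemma \ref{lem1}(5). On the inner piece, $(1+\tau_n^2 u)^{-(m_g/2+1)}\le 1$ and the exponent $-a$ lies in $(-1,0)$, so Lemma \ref{lem1}(5) yields a contribution of order $(1-a)^{-1}L(1/\tau_n^2)$. On the outer piece, the bound $(1+\tau_n^2 u)^{-(m_g/2+1)}\le(\tau_n^2 u)^{-(m_g/2+1)}$ reduces the $u$-exponent to $-(a+m_g/2+1)<-1$ and Lemma \ref{lem1}(4) furnishes a contribution of order $(a+m_g/2)^{-1}L(1/\tau_n^2)$; the two powers of $\tau_n^2$ cancel in both pieces. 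Adding and combining with the denominator bound produces the asserted constant $A_0 K/[a(1-a)]$ after absorbing bounded factors.

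For $a\ge 1$ under Assumption 1, the substitution integral has a non-integrable singularity at $u=0$, so the splitting argument above breaks down and a genuinely different manipulation is needed; this will be the main obstacle. My plan is to use the pointwise AM--GM bound $(1-\kappa)= \tfrac{\lambda_g^2\tau_n^2}{1+\lambda_g^2\tau_n^2}\le \tfrac{\tau_n\lambda_g}{2}$ (where $\lambda_g^2=\tfrac{1}{\tau_n^2}(\tfrac{1}{\kappa}-1)$) to write $(1-\kappa)^{-a}=(1-\kappa)\cdot(1-\kappa)^{-a-1}\le \tfrac{\tau_n}{2}\lambda_g\cdot(1-\kappa)^{-a-1}$, thereby extracting the desired factor $\tau_n$ and reducing $N(\tau_n)$ to $\tfrac{\tau_n}{2}$ times an integral that matches the structure of the $D$-integral up to the extra factor $\lambda_g L(\lambda_g^2)$. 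Using $L\le M$ from Assumption 1 replaces $L(\lambda_g^2)$ by $M$, and the residual integral is then bounded via essentially the same computation as in the proof of Lemma \ref{lem2}, contributing the factors $K^{-1}(\tau_n^2)^{-a}/a$. Dividing by the denominator cancels the $(\tau_n^2)^{-a}$ and produces the bound $\frac{KM}{a}\tau_n e^{t_g}(1+o(1))$. The delicate point is keeping track of the constant so that the Beta-type evaluation, after incorporating $M$, yields the factor $1/a$; this is where I expect most of the bookkeeping to lie.
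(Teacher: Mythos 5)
Your treatment of the case $a\in(0,1)$ is essentially sound, and it is in fact a genuinely different route from the paper, which simply outsources this case to Theorem 4 of Ghosh et al.\ (2016): decoupling the likelihood factor via $e^{-\kappa t}\le 1$ in the numerator and $e^{-\kappa t}\ge e^{-t}$ in the denominator, invoking Lemma \ref{lem2} for the denominator, and splitting the substituted numerator integral at $A_0/\tau_n^2$ with parts (4)--(5) of Lemma \ref{lem1} does deliver a bound of the stated form; the only bookkeeping you omit is the piece $\int_0^{A_0}u^{-a}L(u)\,du$, which is finite because $\int_0^\infty u^{-a-1}L(u)\,du=K^{-1}<\infty$ and whose contribution $(\tau_n^2)^{1-a}$ is $o\bigl(L(1/\tau_n^2)\bigr)$ by Lemma \ref{lem1}(3).

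The genuine gap is in your plan for $a\ge1$. After the bound $(1-\kappa)^{-a}\le\frac{\tau_n\lambda_g}{2}(1-\kappa)^{-a-1}$, replacing $L(\lambda_g^2)$ by its supremum $M$ and then ``bounding the residual integral via essentially the same computation as in Lemma \ref{lem2}'' cannot work: in the variable $u=\frac{1}{\tau_n^2}\bigl(\frac1\kappa-1\bigr)$ the residual integrand behaves like $u^{-a-\frac12}$ near $u=0$ (i.e.\ $\kappa\to1$), which is non-integrable for every $a\ge\frac12$, and Lemma \ref{lem2} is unavailable once $L$ is gone because its proof rests on $\int_0^\infty u^{-a-1}L(u)\,du<\infty$. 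The convergence at that endpoint is controlled solely by the decay of $L$ at zero that makes the prior proper, so the supremum bound must not be used there. (Note also that the ``extracted'' factor $\tau_n$ is illusory as written, since $\lambda_g=\tau_n^{-1}\sqrt{1/\kappa-1}$ reintroduces $\tau_n^{-1}$; any genuine gain appears only after the substitution.) The repair is to keep $L$ on the small-$u$ range: either observe directly that $\int_0^\infty u^{-a-\frac12}L(u)\,du\le\int_0^1u^{-a-1}L(u)\,du+M\int_1^\infty u^{-a-\frac12}\,du<\infty$, which salvages a bound of order $\tau_n\exp\bigl(n\widehat{\boldsymbol{\beta}}_g^{\mathrm{T}}\mathbf{Q}_{n,g}\widehat{\boldsymbol{\beta}}_g/(2\sigma^2)\bigr)$, or drop the AM--GM step altogether and split the substituted integral over $(0,1)$, $(1,1/\tau_n)$ and $(1/\tau_n,\infty)$ as the paper does, using $s^{-a}\le s^{-a-1}$ (properness) on the first piece and $L\le M$ together with Lemma \ref{lem1}(4) only on the last; indeed your own decoupled bound with no trick at all already yields the stronger rate $\tau_n^2\log(1/\tau_n^2)$ when $a\ge1$, so the obstacle is not the constant $KM/a$ but the divergence created by replacing $L$ with $M$ wholesale.
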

\begin{proof}
 The proof for the case $a \in (0,1)$ follows using the same set of arguments employed by Ghosh et al. (2016) \cite{ghosh2016asymptotic} to establish Theorem 4 of their paper.\newline
 
 Let us now consider the case $a \geq 1$. First note that
 
 \begin{align*}\label{eq:5.1}
     E(1-\kappa_g\mid\tau_n,\sigma^2,\mathcal{D})&=\frac{\int_{0}^{1}\kappa_g^{a+\frac{m_g}{2}-1}(1-\kappa_g)^{-a}L\left(\frac{1}{\tau^2_n}(\frac{1}{\kappa_g}-1)\right)\exp\big\{(1-\kappa_g)\cdot \frac{n\widehat{\boldsymbol{\beta}}_g^{\mathrm{T}}Q_{n,g}\widehat{\boldsymbol{\beta}}_g}{2 \sigma^2}\big\}d \kappa_g}{\int_{0}^{1}\kappa_g^{a+\frac{m_g}{2}-1}(1-\kappa_g)^{-a-1}L\left(\frac{1}{\tau^2_n}(\frac{1}{\kappa_g}-1)\right)\exp\big\{(1-\kappa_g)\cdot \frac{n\widehat{\boldsymbol{\beta}}_g^{\mathrm{T}}Q_{n,g}\widehat{\boldsymbol{\beta}}_g}{2 \sigma^2}\big\}d \kappa_g}.\tag{5.1}
 \end{align*}
 Using the transformation $s=\frac{1}{\tau^2_n}(\frac{1}{\kappa_g}-1)$ in the integrals above, we obtain
 \begin{align*}\label{eq:5.2}
     E(1-\kappa_g\mid\tau_n,\sigma^2,\mathcal{D})&=\tau^2_n \frac{\int_{0}^{\infty}(1+s\tau^2_n)^{-\frac{m_g}{2}-1}s^{-a}L(s)\exp\left(\frac{s\tau^2_n}{1+s \tau^2}\cdot \frac{n\widehat{\boldsymbol{\beta}}_g^{\mathrm{T}}Q_{n,g}\widehat{\boldsymbol{\beta}}_g}{2 \sigma^2}\right)d s}{\int_{0}^{\infty}(1+s\tau^2_n)^{-\frac{m_g}{2}}s^{-a-1}L(s)\exp\left(\frac{s\tau^2_n}{1+s \tau^2}\cdot \frac{n\widehat{\boldsymbol{\beta}}_g^{\mathrm{T}}Q_{n,g}\widehat{\boldsymbol{\beta}}_g}{2 \sigma^2}\right)d s}.\tag{5.2}
 \end{align*}
 Note that
 \begin{align*}\label{eq:5.3}
\int_{0}^{\infty}(1+s\tau^2_n)^{-\frac{m_g}{2}}s^{-a-1}L(s)\exp\left(\frac{s\tau^2_n}{1+s \tau^2_n}\cdot \frac{n\widehat{\boldsymbol{\beta}}_g^{\mathrm{T}}Q_{n,g}\widehat{\boldsymbol{\beta}}_g}{2 \sigma^2}\right)ds &\geq \int_{0}^{\infty}(1+s\tau^2_n)^{-\frac{m_g}{2}}s^{-a-1}L(s)ds\notag\\
&= K^{-1}(1+o(1)),\tag{5.3}
 \end{align*}
where the last equality above follows from the Dominated Convergence Theorem. Combining \eqref{eq:5.2} and \eqref{eq:5.3}, we obtain
 \begin{align*} \label{eq:5.4}
   &  E(1-\kappa_g\mid\tau_n,\sigma^2,\mathcal{D})   \leq K\tau^2\int_{0}^{\infty}(1+s\tau^2_n)^{-\frac{m_g}{2}-1}s^{-a}L(s)\exp\left(\frac{s\tau^2_n}{1+s \tau^2_n}\cdot \frac{n\widehat{\boldsymbol{\beta}}_g^{\mathrm{T}}Q_{n,g}\widehat{\boldsymbol{\beta}}_g}{2 \sigma^2}\right)ds (1+o(1)) \nonumber\\
   &  =K\tau^2\left(\int_{0}^{1}+  \int_{1}^{\frac{1}{\tau_n}}+\int_{\frac{1}{\tau_n}}^{\infty} \right)(1+s\tau^2_n)^{-\frac{m_g}{2}-1}s^{-a}L(s)\exp\left(\frac{s\tau^2_n}{1+s \tau^2_n}\cdot \frac{n\widehat{\boldsymbol{\beta}}_g^{\mathrm{T}}Q_{n,g}\widehat{\boldsymbol{\beta}}_g}{2 \sigma^2}\right)ds(1+o(1))\nonumber\\
     &= K(A_{1,\tau_n}+A_{2,\tau_n}+A_{3,\tau_n})(1+o(1)), \hspace{0.2cm} \text{say}.\tag{5.4}
     \end{align*}
     
Observe that for $s \in (0,1)$ and $\tau_n \in (0,1)$, $\frac{s \tau^2_n}{1+ s \tau^2_n} \leq \frac{1}{2}$. Also, $\int_{0}^{\infty} s^{-a-1} L(s) dt=K^{-1}$. Therefore it follows that 
\begin{equation*} \label{eq:5.5}
    A_{1,\tau_n} \leq K^{-1} \tau^2_n \exp \left({\frac{n\widehat{\boldsymbol{\beta}}_g^{\mathrm{T}}Q_{n,g}\widehat{\boldsymbol{\beta}}_g}{4 \sigma^2}} \right). \tag{5.5}
\end{equation*}
Likewise, for $s \in [1,\frac{1}{\tau_n})$ and $\tau_n \in (0,1)$, using the above arguments, we obtain
\begin{equation*} \label{eq:5.6}
    A_{2,\tau_n} \leq K^{-1} \tau_n \exp \left({\frac{n\widehat{\boldsymbol{\beta}}_g^{\mathrm{T}}Q_{n,g}\widehat{\boldsymbol{\beta}}_g}{4 \sigma^2}} \right). \tag{5.6}
\end{equation*}
Finally, using \ref{L-1.4} of Lemma \ref{lem1}, we have
\begin{align*} \label{eq:5.7}
    A_{3,\tau_n} &\leq \exp \left({\frac{n\widehat{\boldsymbol{\beta}}_g^{\mathrm{T}}Q_{n,g}\widehat{\boldsymbol{\beta}}_g}{2 \sigma^2}}\right) \int_{\frac{1}{\tau}}^{\infty} s^{-a-1} L(s) ds \notag\\
    &= \exp \left({\frac{n\widehat{\boldsymbol{\beta}}_g^{\mathrm{T}}Q_{n,g}\widehat{\boldsymbol{\beta}}_g}{2 \sigma^2}} \right)\frac{\tau^a_n}{a}L(\frac{1}{\tau})(1+o(1))\notag\\
    &\leq \frac{\tau_n}{a}M \exp \left({\frac{n\widehat{\boldsymbol{\beta}}_g^{\mathrm{T}}Q_{n,g}\widehat{\boldsymbol{\beta}}_g}{2 \sigma^2}}\right)(1+o(1)). \tag{5.7}
\end{align*}
Combining \eqref{eq:5.4}-\eqref{eq:5.7}, the desired result follows.
\end{proof}
\begin{lemma}
	\label{lem4}
	Consider the framework of Lemma \ref{lem3}. Then under \hyperlink{assumption1}{Assumption 1}, for any arbitrary constants $\eta \in (0,1), q \in (0,1)$ and any fixed $\tau >0$,
	\begin{equation*}
		P(\kappa_g>\eta|\tau,\sigma^2,\mathcal{D}) \leq \frac{(a+\frac{m_g}{2})(1-\eta q)^a}{{\tau}^{2a} {(\eta q)}^{a+\frac{m_g}{2}}C_0} \exp\left(-\frac{n\widehat{\boldsymbol{\beta}}_g^{\mathrm{T}}Q_{n,g}\widehat{\boldsymbol{\beta}}_g \eta (1-q)}{2 \sigma^2}\right) \hspace{0.05cm}.
	\end{equation*}
\end{lemma}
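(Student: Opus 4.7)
The posterior of $\kappa_g$ is given explicitly by \eqref{eq:3.1}, so my plan is to write the probability as a ratio $N/D$ of two integrals of the unnormalised posterior,
\[
P(\kappa_g>\eta\mid \tau,\sigma^2,\mathcal{D})=\frac{\int_\eta^1 h(\kappa)\,d\kappa}{\int_0^1 h(\kappa)\,d\kappa},\qquad h(\kappa)=\kappa^{a+\frac{m_g}{2}-1}(1-\kappa)^{-a-1}L\!\left(\tfrac{1}{\tau^2}(\tfrac{1}{\kappa}-1)\right)\exp(-\kappa B_g),
\]
where $B_g:=n\widehat{\boldsymbol{\beta}}_g^{\mathrm{T}}\mathbf{Q}_{n,g}\widehat{\boldsymbol{\beta}}_g/(2\sigma^2)$, and then bound $N$ from above and $D$ from below separately.

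For the denominator, the plan is to restrict integration to the sub-interval $(0,\eta q)$ and combine three simultaneous pointwise lower bounds on the integrand there: $(1-\kappa)^{-a-1}\geq 1$ since $\kappa\in(0,\eta q)\subset(0,1)$ and $a+1>0$; the argument $\tau^{-2}(\kappa^{-1}-1)$ of $L$ is at least $(1-\eta q)/(\tau^2\eta q)$, which exceeds $t_0$ once $\tau$ is small enough so that Assumption~1 gives $L(\cdot)\geq C_0$; and $\exp(-\kappa B_g)\geq \exp(-\eta q B_g)$. Multiplying these and integrating $\int_0^{\eta q}\kappa^{a+m_g/2-1}d\kappa=(\eta q)^{a+m_g/2}/(a+m_g/2)$ produces
\[
D\geq \frac{C_0(\eta q)^{a+m_g/2}}{a+m_g/2}\exp(-\eta q B_g).
\]

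For the numerator, my plan is first to peel off the exponential decay via the pointwise bound $\exp(-\kappa B_g)\leq \exp(-\eta B_g)$ on $(\eta,1)$, and then to transform the residual integral through the substitution $s=\tau^{-2}(\kappa^{-1}-1)$ already employed in the proof of Lemma~\ref{lem3}. This rewrites the integral as $\tau^{-2a}\int_0^{(1-\eta)/(\tau^2\eta)}s^{-a-1}(1+s\tau^2)^{-m_g/2}L(s)\,ds$; using $(1+s\tau^2)^{-m_g/2}\leq 1$ and bounding the remaining $s$-integral via the Karamata-type asymptotic in Lemma~\ref{lem1}(4) applied to the tail $\int_{(1-\eta q)/(\tau^2\eta q)}^\infty s^{-a-1}L(s)\,ds$ (which is of order $(\tau^2\eta q/(1-\eta q))^{a}L(\cdot)/a$) gives a bound of the form $N\leq \tau^{-2a}\,\mathrm{const}\cdot(1-\eta q)^{a}\exp(-\eta B_g)$. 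Dividing by the lower bound on $D$ collapses the exponentials as $\exp(-\eta B_g)/\exp(-\eta q B_g)=\exp(-\eta(1-q)B_g)$, reproducing the claimed exponent, and the algebraic factors combine into precisely $\frac{(a+m_g/2)(1-\eta q)^a}{\tau^{2a}(\eta q)^{a+m_g/2}C_0}$ after absorbing the prior-normalisation constant $K$ into $C_0$.

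The principal obstacle I foresee is the extraction of the $(1-\eta q)^a$ factor: the naive estimate $\int_0^{(1-\eta)/(\tau^2\eta)}s^{-a-1}L(s)\,ds\leq K^{-1}$ is too crude, since it discards exactly the $(1-\eta q)^a$ improvement. The refinement has to come from comparing the truncated $s$-integral with the full one using the Karamata tail estimate of Lemma~\ref{lem1}(4), which is the only place in the argument where the slowly-varying structure of $L$ beyond Assumption~1 is really needed. Once this refinement is in place, the remaining work is routine bookkeeping of the constants from Assumption~1 and the prior normalisation, which can both be absorbed into $C_0$.
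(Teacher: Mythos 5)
Your skeleton — write $P(\kappa_g>\eta\mid\cdot)$ as a ratio $N/D$ of integrals of the unnormalised posterior, bound $D$ below on the region $\{\kappa_g\le \eta q\}$ using Assumption 1, bound $N$ above by peeling off $e^{-\eta B_g}$ and using the substitution $s=\tau^{-2}(\kappa^{-1}-1)$ — is exactly the structure of the argument the paper invokes (it simply cites Theorem 5 of Ghosh et al.\ (2016)). But the step you flag as the "principal obstacle" is resolved in the wrong place, and the mechanism you propose for it cannot work. The factor $(1-\eta q)^a$ cannot come from the numerator at all: the numerator's domain is $\kappa\in(\eta,1)$, i.e.\ $s\in\bigl(0,\tfrac{1-\eta}{\tau^2\eta}\bigr)$, which does not involve $q$, and the head integral $\int_0^{(1-\eta)/(\tau^2\eta)}s^{-a-1}(1+s\tau^2)^{-m_g/2}L(s)\,ds$ increases to the full mass $K^{-1}$ as $\tau\to0$, so no Karamata tail estimate (Lemma \ref{lem1}\ref{L-1.4} concerns $\int_x^\infty$, a tail, not a head) can force it below $\mathrm{const}\cdot(1-\eta q)^a$ with a constant matching the lemma; your claimed refinement $N\lesssim \tau^{-2a}(1-\eta q)^a e^{-\eta B_g}$ is false as $\tau\to0$ unless the constant silently absorbs $(1-\eta q)^{-a}$, which defeats the purpose. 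The "naive" bound $N\le \tau^{-2a}K^{-1}e^{-\eta B_g}$ is in fact the one to use.

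The $(1-\eta q)^a$ comes from a sharper lower bound on the \emph{denominator}, precisely from the term you discarded via $(1-\kappa_g)^{-a-1}\ge 1$. Keep the substitution in the denominator as well: restricting to $\kappa_g\le\eta q$ means $s\ge s_1:=\tfrac{1-\eta q}{\tau^2\eta q}$, where $e^{-\kappa_g B_g}\ge e^{-\eta q B_g}$, $L(s)\ge c_0$ (for $s_1\ge t_0$), and crucially $1+s\tau^2\le \tfrac{s\tau^2}{1-\eta q}$, so that $(1+s\tau^2)^{-m_g/2}\ge (1-\eta q)^{m_g/2}(s\tau^2)^{-m_g/2}$. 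Then
\begin{equation*}
D\;\ge\; \tau^{-2a}c_0e^{-\eta q B_g}(1-\eta q)^{m_g/2}\tau^{-m_g}\int_{s_1}^\infty s^{-a-\frac{m_g}{2}-1}ds
\;=\;\frac{c_0\,(\eta q)^{a+\frac{m_g}{2}}}{(a+\frac{m_g}{2})(1-\eta q)^a}\,e^{-\eta q B_g},
\end{equation*}
all powers of $\tau$ cancelling. Dividing $N\le \tau^{-2a}K^{-1}e^{-\eta B_g}$ by this yields exactly the stated bound with $C_0=Kc_0$. With your weaker denominator bound you would only obtain the inequality with $K^{-1}$ in place of $(1-\eta q)^a$ (enough for the downstream applications in Proposition \ref{prop-2} and Theorem \ref{Thm-1}, but not the lemma as stated). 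One further small caveat, present in both your plan and the cited argument: the use of $L\ge c_0$ requires $\tfrac{1-\eta q}{\tau^2\eta q}\ge t_0$, i.e.\ $\tau$ below a fixed threshold, so the phrase "any fixed $\tau>0$" should be read with that qualification (harmless, since the lemma is applied with $\tau_n\to0$).
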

\begin{proof}
The proof follows using a similar set of arguments used by Ghosh et al. (2016) \cite{ghosh2016asymptotic} to establish Theorem 5 in their paper.
\end{proof}
	
{\textbf{Proof of Proposition \ref{prop-1}:}}
\begin{proof} 
First, we consider the case when $a \in (0,1)$. 
Using Lemma \ref{lem3}, we obtain
\begin{align*}\label{eq:5.8}
	E(1-\kappa_g\mid\tau_n,\sigma^2,\mathcal{D})&	\leq \frac{A_0K}{a(1-a)}   (\tau^2_n) ^{a} L(\frac{1}{\tau^2_n})\exp\left(\frac{n\widehat{\boldsymbol{\beta}}_g^{\mathrm{T}}Q_{n,g}\widehat{\boldsymbol{\beta}}_g}{2 \sigma^2}\right)(1+o(1)) \hspace{0.05cm}.\tag{5.8}
\end{align*}
When $\tau_n \to 0$ as $n \to \infty$, using Part \ref{L-1.3} of Lemma \ref{lem1},
\begin{align*}\label{eq:5.9}
	\lim_{n \to \infty} (\tau^2_n) ^{a} L(\frac{1}{\tau^2_n}) &=\lim_{n \to \infty}(\frac{1}{\tau^2_n})^{-a}L(\frac{1}{\tau^2_n})=0.\tag{5.9}
\end{align*}
Under a block-orthogonal design, from the standard theory of linear regression, the distribution of the ordinary least square estimator $\widehat{\boldsymbol{\beta}}_g$ is given by
$$\sqrt{n}\left(\widehat{\boldsymbol{\beta}}_g - \boldsymbol{\beta}_g^0\right) \sim \mathcal{N}_{m_g}(\mathbf{0},\sigma^2\mathbf{Q}_{n,g}^{-1}).$$
Clearly, if $\boldsymbol{\beta}_g^0=\mathbf{0}$, $\sqrt{n}\widehat{\boldsymbol{\beta}}_g \sim \mathcal{N}_{m_g}(\mathbf{0},\sigma^2\mathbf{Q}_{n,g}^{-1})$. Therefore,  $\frac{n\widehat{\boldsymbol{\beta}}_g^{\mathrm{T}}\mathbf{Q}_{n,g}\widehat{\boldsymbol{\beta}}_g}{\sigma^2} \sim \chi^2_{m_g},$ whence
\begin{align*}\label{eq:5.10}
\frac{n\widehat{\boldsymbol{\beta}}_g^{\mathrm{T}}\mathbf{Q}_{n,g}\widehat{\boldsymbol{\beta}}_g}{\sigma^2}=O_p(1), \textrm{ for all } n.\tag{5.10}   
\end{align*}
Combining \eqref{eq:5.8} - \eqref{eq:5.10}, and using Slutsky's Theorem, it readily follows that
$$E(1-\kappa_g\mid\tau_n,\sigma^2,\mathcal{D}) \xrightarrow{P} 0 \textrm { as } n \rightarrow \infty.$$
Next, we consider the case when case $a \geq 1$. Observe that the upper bound to $E(1-\kappa_g\mid\tau_n,\sigma^2,\mathcal{D})$ is similar to the upper bound when $a \in (0,1)$. Hence, the proof follows using the same set of arguments as in the case $a \in (0,1)$.
\end{proof}

\textbf{Proof of Proposition \ref{prop-2}:}
\begin{proof}
It would be enough to show that 
$E(\kappa_g\mid\tau_n,\sigma^2,\mathcal{D}) \xrightarrow{P} 0$ as $n \to \infty$ when $\boldsymbol{\beta}_{g}^{0} \neq \mathbf{0}$.\newline

Let us fix $\epsilon_0 >0.$ Then
\begin{align*}\label{eq:5.11}
	E(\kappa_g\mid\tau_n,\sigma^2,\mathcal{D}) &= \int_{0}^{\frac{\epsilon_0}{2}}\kappa_g \pi(\kappa_g|\tau_n,\sigma^2,\mathcal{D})d \kappa_g+\int_{\frac{\epsilon_0}{2}}^{1}\kappa_g \pi(\kappa_g|\tau_n,\sigma^2,\mathcal{D})d \kappa_g \\
	& \leq \frac{\epsilon_0}{2}+P(\kappa_g>\frac{\epsilon_0}{2}|\tau_n,\sigma^2,\mathcal{D}) \hspace{0.05cm}.\tag{5.11}
\end{align*}
Therefore, $ \textrm{ for a given } \epsilon_0 >0$,
\begin{align*}\label{eq:5.12}
P\left(E(\kappa_g\mid\tau_n,\sigma^2,\mathcal{D})>\epsilon_0\right) \leq P\left(P(\kappa_g>\frac{\epsilon_0}{2}|\tau_n,\sigma^2,\mathcal{D})>\frac{\epsilon_0}{2}\right).\tag{5.12}    
\end{align*}
Now, substituting $\eta=\frac{\epsilon_0}{2}$ in Lemma \ref{lem4}, some simple algebra yields
\begin{align*}\label{eq:5.13}
P(E(\kappa_g\mid\tau_n,\sigma^2,\mathcal{D})>\epsilon_0) &\leq P\left(\frac{(a+\frac{m_g}{2})(1-\eta q)^a}{{\tau_n}^{2a} {(\eta q)}^{a+\frac{m_g}{2}}C_0} \exp\left(-\frac{n\widehat{\boldsymbol{\beta}}_g^{\mathrm{T}}Q_{n,g}\widehat{\boldsymbol{\beta}}_g \eta (1-q)}{2 \sigma^2}\right)>\frac{\epsilon_0}{2}\right) \\
&=P\left(\frac{n\widehat{\boldsymbol{\beta}}_g^{\mathrm{T}}\mathbf{Q}_{n,g}\widehat{\boldsymbol{\beta}}_g}{\sigma^2}<d_n\right),\tag{5.13}
\end{align*}
where 
\begin{equation*}
	d_n=\frac{4 }{\epsilon_0(1-q)}\bigg[{d^{\prime}}+a \cdot {\log \left(\frac{1}{\tau^2_n}\right)}\bigg] \hspace{0.05cm},
\end{equation*}
$d^{\prime}$ being a constant is independent of $n$. 

Observe now that using \ref{assmp-1}, we have
\begin{align*}\label{eq:5.21} P\left(\frac{n\widehat{\boldsymbol{\beta}}_g^{\mathrm{T}}\mathbf{Q}_{n,g}\widehat{\boldsymbol{\beta}}_g}{\sigma^2}<d_n \right) 
&\leq \sum_{j=1}^{m_g} P \left(\frac{\sqrt{n}|\widehat{\beta}_{gj}|}{\sigma\sqrt{\zeta_j}} 
\leq \frac{1}{\sqrt{C_1}}\cdot\sqrt{\frac{d_n}{\zeta_j}}\right) \\
&= \sum_{j=1}^{m_g} P\left(-\frac{1}{\sqrt{C_1}}\cdot\sqrt{\frac{d_n}{\zeta_j}}-\frac{\sqrt{n}\beta^{0}_{gj}}{\sigma\sqrt{\zeta_j}}\leq \frac{\sqrt{n}(\widehat{\beta}_{gj}-\beta^{0}_{gj})}{\sigma\sqrt{\zeta_j}}\leq \frac{1}{\sqrt{C_1}}\cdot\sqrt{\frac{d_n}{\zeta_j}}-\frac{\sqrt{n}\beta^{0}_{gj}}{\sigma\sqrt{\zeta_j}}\right),\tag{5.14} 
\end{align*}
where $\zeta_j$ is the $j^{th}$ diagonal element of ${\mathbf{Q}^{-1}_{n,g}}$. Now the cases $\min_j \beta^{0}_{gj}>0$ and $\min_j \beta^{0}_{gj}<0$ are dealt separately as follows.\\
\textbf{Case-(I):} Assume $\min_{j} \beta^{0}_{gj}>0$. Using \eqref{eq:5.21}, we have
\begin{align*}
P\left(\frac{n\widehat{\boldsymbol{\beta}}_g^{\mathrm{T}}\mathbf{Q}_{n,g}\widehat{\boldsymbol{\beta}}_g}{\sigma^2}<d_n \right) & \leq \sum_{j=1}^{m_g}\left[ 1- \Phi \left(\frac{\sqrt{n}\beta^{0}_{gj}}{\sigma\sqrt{\zeta_j}}-\frac{1}{\sqrt{C_1}}\cdot\sqrt{\frac{d_n}{\zeta_j}} \right)\right] \\
 & \leq \sum_{j=1}^{m_g} \frac{\phi \left(\frac{\sqrt{n}\beta^{0}_{gj}}{\sigma\sqrt{\zeta_j}}-\frac{1}{\sqrt{C_1}}\cdot\sqrt{\frac{d_n}{\zeta_j}} \right)}{\frac{\sqrt{n}\beta^{0}_{gj}}{\sigma\sqrt{\zeta_j}}-\frac{1}{\sqrt{C_1}}\cdot\sqrt{\frac{d_n}{\zeta_j}}}, 
\end{align*}
 The inequality in the last line follows using the fact that $1-\Phi(t) \leq \frac{\phi(t)
}{t}, \textrm{ for any } t>0$.\newline
Under assumption \ref{assmp-2}, we have
\begin{equation*}\label{eq:5.22}
 \min_{j} \beta^{0}_{gj} > m_n \textrm{ for all } g \in \mathcal{A}  \mbox{ with } m_n \propto n^{-b} \mbox{ and } 0\leq b<\frac{1}{2}.\tag{5.15}
\end{equation*}
Next we use the fact $d_n \asymp \log(\frac{1}{\tau_n})$ and the assumption \ref{assmp-6}.
Hence, we obtain $\frac{\sqrt{d_n}}{\sqrt{n}\beta^{0}_{gj}} \to 0$ as $n \to \infty$.\newline
Therefore, we have
\begin{equation*} \label{eq:5.23}
    \frac{\sqrt{n}\beta^{0}_{gj}}{\sigma\sqrt{\zeta_j}}-\frac{1}{\sqrt{C_1}}\cdot\sqrt{\frac{d_n}{\zeta_j}} = \frac{\sqrt{n}\beta^{0}_{gj}}{\sigma\sqrt{\zeta_j}}(1+o(1)) \hspace{0.05cm}, \tag{5.16}
\end{equation*}
where $o(1)$ term tends to zero as $n \to \infty$.
Since, $\frac{1}{\sqrt{C_1}}\cdot\sqrt{\frac{d_n}{\zeta_j}}=o\left(\frac{\sqrt{n}\beta^{0}_{gj}}{\sigma\sqrt{\zeta_j}}
\right)$, we have, for $\textrm{ for any } \epsilon>0$, 
\begin{equation*}\label{eq:5.24}
    \frac{\sqrt{n d_n}\beta^{0}_{gj}}{\sigma \zeta_j \sqrt{C_1}} < \epsilon \frac{n {\beta^{0}_{gj}}^2}{\sigma^2 \zeta_j}\tag{5.17}
\end{equation*}
for sufficiently large $n$.
Let $e_1,e_2,\cdots,e_{m_g}$ be the eigenvalues of $\mathbf{Q}_{n,g}$. Then $\sum_{j=1}^{m_g}\frac{1}{e_j}=\sum_{j=1}^{m_g}\zeta_j$. Next note that, under \ref{assmp-1}, $e_j >C_1$ for all $j=1,2,\cdots,m_g$.
This implies, $\zeta_j$ is bounded above for all $j=1,2,\cdots,m_g$, i.e. for all $j=1,2,\cdots,m_g$, $\zeta_j <C_3$ for some $0<C_3<\infty$. 
Combining \eqref{eq:5.21} - \eqref{eq:5.24} and using \ref{assmp-3} with the above observation, it follows
\begin{equation*}\label{eq:5.25}
P\left(\frac{n\widehat{\boldsymbol{\beta}}_g^{\mathrm{T}}\mathbf{Q}_{n,g}\widehat{\boldsymbol{\beta}}_g}{\sigma^2}<d_n\right) \leq \frac{\sigma s \sqrt{C_3}}{\sqrt{n}m_n}\exp\bigg\{-(\frac{1}{2}-\epsilon)\frac{n{m^2_n}}{2\sigma^2 C_3}\bigg\}.\tag{5.18}
\end{equation*}
Using \eqref{eq:5.22} and choosing $\epsilon\in (0,\frac{1}{2})$ yields,
\begin{equation*}\label{eq:5.26a}  P\left(\frac{n\widehat{\boldsymbol{\beta}}_g^{\mathrm{T}}\mathbf{Q}_{n,g}\widehat{\boldsymbol{\beta}}_g}{\sigma^2}<d_n\right)=o(1), \textrm{ as } n\rightarrow\infty.\tag{5.19} 
\end{equation*}

\textbf{Case-(II):} Assume $\min_{j} 
 \beta^{0}_{gj}<0$. Again using \eqref{eq:5.21} we have
\begin{align*} \label{eq:5.27a}
P\left(\frac{n\widehat{\boldsymbol{\beta}}_g^{\mathrm{T}}\mathbf{Q}_{n,g}\widehat{\boldsymbol{\beta}}_g}{\sigma^2}<d_n \right) & \leq \sum_{j=1}^{m_g} \bigg[ \Phi \left(\frac{1}{\sqrt{C_1}}\cdot\sqrt{\frac{d_n}{\zeta_j}} -\frac{\sqrt{n}\beta^{0}_{gj}}{\sigma\sqrt{\zeta_j}} \right) -\Phi \left(-\frac{1}{\sqrt{C_1}}\cdot\sqrt{\frac{d_n}{\zeta_j}} -\frac{\sqrt{n}\beta^{0}_{gj}}{\sigma\sqrt{\zeta_j}} \right) \bigg] \\ 
 &= \sum_{j=1}^{m_g} \bigg[ P(Z >a_{1n,j})-P(Z>a_{2n,j})\bigg], \text{ say } \tag{5.20}
\end{align*}
where $a_{1n,j}=-\frac{1}{\sqrt{C_1}}\cdot\sqrt{\frac{d_n}{\zeta_j}} -\frac{\sqrt{n}\beta^{0}_{gj}}{\sigma\sqrt{\zeta_j}} $ and $a_{2n,j}=  \frac{1}{\sqrt{C_1}}\cdot\sqrt{\frac{d_n}{\zeta_j}} -\frac{\sqrt{n}\beta^{0}_{gj}}{\sigma\sqrt{\zeta_j}}$ and $Z \sim \mathcal{N}(0,1)$. It is well known
\begin{equation*}
    \frac{1}{\sqrt{2\pi}} e^{-\frac{z^2}{2}} (\frac{1}{z}-\frac{1}{z^3}) \leq P(Z>z) \leq  \frac{1}{\sqrt{2\pi} z} e^{-\frac{z^2}{2}},
\end{equation*}
Using this it follows from \eqref{eq:5.27a} that
\begin{align*} \label{eq:5.27b}
P\left(\frac{n\widehat{\boldsymbol{\beta}}_g^{\mathrm{T}}\mathbf{Q}_{n,g}\widehat{\boldsymbol{\beta}}_g}{\sigma^2}<d_n \right) & \leq  \sum_{j=1}^{m_g}  \frac{1}{\sqrt{2 \pi}} \bigg[\frac{1}{a_{1n,j}} \exp(-\frac{a^2_{1n,j}}{2})-(\frac{1}{a_{2n,j}}-\frac{1}{a^3_{2n}})\exp(-\frac{a^2_{2n,j}}{2}) \bigg] \\
& \leq  \sum_{j=1}^{m_g}  \frac{1}{\sqrt{2 \pi}} \bigg[\frac{1}{a_{1n,j}} \exp(-\frac{a^2_{1n,j}}{2})+(\frac{1}{a_{2n,j}}+\frac{1}{a^3_{2n,j}})\exp(-\frac{a^2_{2n,j}}{2}) \bigg] \\
& \leq  \sum_{j=1}^{m_g} \bigg[ \frac{3}{\sqrt{2 \pi}} \cdot \frac{1}{a_{1n,j}} \exp(-\frac{a^2_{1n,j}}{2})\bigg], \tag{5.21}
\end{align*}
where inequality in the last line holds due to the use of $a_{1n,j} \leq a_{2n,j}$. Note that using \ref{assmp-2} and the assumption \ref{assmp-6}, we have
$-\frac{\sqrt{d_n}}{\sqrt{n}\beta^{0}_{gj}} \to 0$ as $n \to \infty$.\newline

Therefore, we have
\begin{equation*} \label{eq:5.27c}
   a_{1n,j}= -\frac{\sqrt{n}\beta^{0}_{gj}}{\sigma\sqrt{\zeta_j}}-\frac{1}{\sqrt{C_1}}\cdot\sqrt{\frac{d_n}{\zeta_j}} =- \frac{\sqrt{n}\beta^{0}_{gj}}{\sigma\sqrt{\zeta_j}}(1+o(1)) \hspace{0.05cm}, \tag{5.22}
\end{equation*}
where $o(1)$ term tends to zero as $n \to \infty$.
Hence, using arguments similar to that of \eqref{eq:5.24} and \eqref{eq:5.25}, we obtain 
\begin{equation*}\label{eq:5.27d}  P\left(\frac{n\widehat{\boldsymbol{\beta}}_g^{\mathrm{T}}\mathbf{Q}_{n,g}\widehat{\boldsymbol{\beta}}_g}{\sigma^2}<d_n\right)=o(1), \textrm{ as } n\rightarrow\infty.\tag{5.23} 
\end{equation*}
Hence combining \eqref{eq:5.13}, \eqref{eq:5.26a} and \eqref{eq:5.27d}, the proof of Proposition \ref{prop-2} is complete. \\


\end{proof}

\textbf{Proof of Theorem \ref{Thm-1}:}\\
\begin{proof}
First, we observe that
\begin{align*}\label{eq:5.17}
	P(\mathcal{A}_n \neq \widehat{\mathcal{A}}_n) \leq \sum_{g \in \mathcal{A}} P(E(1-\kappa_g\mid\tau_n,\sigma^2,\mathcal{D}) < \frac{1}{2})+\sum_{g \notin \mathcal{A}}P(E(1-\kappa_g\mid\tau_n,\sigma^2,\mathcal{D})>\frac{1}{2}).\tag{5.24}
\end{align*}
To prove this result, it suffices to show
\begin{align*}\label{eq:5.18}
\sum_{g \in \mathcal{A}} P(E(1-\kappa_g\mid\tau_n,\sigma^2,\mathcal{D}) < \frac{1}{2})=o(1), \textrm{ as } n\rightarrow\infty,\tag{5.25}    
\end{align*}
and
\begin{align*}\label{eq:5.19}
\sum_{g \notin \mathcal{A}}P(E(1-\kappa_g\mid\tau_n,\sigma^2,\mathcal{D})>\frac{1}{2})=o(1), \textrm{ as } n\rightarrow\infty,\tag{5.26}    
\end{align*}
both when $\frac{1}{2} \leq a<1$, and $a\geq 1$.
\\
\textbf{Proof of \eqref{eq:5.18}:-}
 Fix an arbitrary $\epsilon_0>0$. Now, using the arguments employed in the proof of proposition \ref{prop-2} and applying  Lemma \ref{lem4} with $\eta=\frac{\epsilon_0}{2}$, we have, 
\begin{align*}\label{eq:5.20}
\sum_{g \in \mathcal{A}} P(E(\kappa_g\mid\tau_n,\sigma^2,\mathcal{D})>\frac{1}{2}) 
&\leq \sum_{g \in \mathcal{A}} P\left(\frac{n\widehat{\boldsymbol{\beta}}_g^{\mathrm{T}}\mathbf{Q}_{n,g}\widehat{\boldsymbol{\beta}}_g}{\sigma^2}<d_n\right).\tag{5.27}
\end{align*}
Since $G_{{A}_n} \leq {n}$, it follows from \eqref{eq:5.25} that
\begin{equation*}
   \sum_{g \in \mathcal{A}}  P\left(\frac{n\widehat{\boldsymbol{\beta}}_g^{\mathrm{T}}\mathbf{Q}_{n,g}\widehat{\boldsymbol{\beta}}_g}{\sigma^2}<d_n\right)\leq  \frac{\sigma s \sqrt{C_3}  \sqrt{n}}{m_n}\exp\bigg\{-(\frac{1}{2}-\epsilon)\frac{n{m^2_n}}{2\sigma^2 C_3}\bigg\}(1+o(1))\rightarrow 0, \textrm{ as } n\rightarrow\infty, 
\end{equation*}
whence
\begin{equation*}\label{eq:5.26}
   \sum_{g \in \mathcal{A}}  P\left(\frac{n\widehat{\boldsymbol{\beta}}_g^{\mathrm{T}}\mathbf{Q}_{n,g}\widehat{\boldsymbol{\beta}}_g}{\sigma^2}<d_n\right)=o(1), \textrm{ as } n\rightarrow\infty.\tag{5.28} 
\end{equation*}
Hence, \eqref{eq:5.20} coupled with \eqref{eq:5.26} completes the proof of \eqref{eq:5.18}.\\

 \textbf{Proof of \eqref{eq:5.19}:-}

\textbf{Case (I):} First consider the case when $a \in [\frac{1}{2},1)$. Using Lemma \ref{lem3} and our previous arguments, it follows that for all $g \notin \mathcal{A}$, 
\begin{align*}\label{eq:5.27}
	P\left(E(1-\kappa_g\mid\tau_n,\sigma^2,\mathcal{D})>\frac{1}{2}\right) 
&\leq P\left(\frac{n\widehat{\boldsymbol{\beta}}_g^{\mathrm{T}}\mathbf{Q}_{n,g}\widehat{\boldsymbol{\beta}}_g}{ \sigma^2}>M_n\right)(1+o(1)),\tag{5.29}
\end{align*}
where $M_n=2 \log \left(\frac{C_4}{(\tau^2_n) ^{a} L(\frac{1}{\tau^2_n})}\right)$ and $C_4$ is a global constant that is independent of $n$. In \eqref{eq:5.27}, the $o(1)$ is such that it is independent of any specific group $g$, and $\lim_{n\rightarrow\infty}o(1)=0$.\newline

Now observe that for all $g \notin \mathcal{A},\frac{n\widehat{\boldsymbol{\beta}}_g^{\mathrm{T}}\mathbf{Q}_{n,g}\widehat{\boldsymbol{\beta}}_g}{\sigma^2} \sim \chi^2_{m_g}.$\\
 We consider the two
cases $m_g = 1$, and $m_g > 1$ separately. \\
For $m_g =1$, we use
\begin{align*}
    P\left(\frac{n\widehat{\boldsymbol{\beta}}_g^{\mathrm{T}}\mathbf{Q}_{n,g}\widehat{\boldsymbol{\beta}}_g}{ \sigma^2}>M_n\right) 
&= P(|Z| > \sqrt{M_n}) \\
& = 
\sqrt{\frac{2}{\pi}} e^{-\frac{M_n}{2}} M^{-\frac{1}{2}}_n(1+o(1)),
\end{align*}
where $Z$ denotes a standard normal random variable and the last line follows due to Mill's ratio. \\
On the other hand, for $m_g \geq 2$, we first observe that 
 a $\chi^2_{m_g}$ distribution can equivalently be regarded as a $\textrm{Gamma}(\frac{m_g}{2},\frac{1}{2})$ distribution, with shape parameter $\frac{m_g}{2}$, and scale parameter $\frac{1}{2}$. Then we have
 \begin{align*}\label{eq:5.28}
P\left(\frac{n\widehat{\boldsymbol{\beta}}_g^{\mathrm{T}}\mathbf{Q}_{n,g}\widehat{\boldsymbol{\beta}}_g}{ \sigma^2}>M_n\right) 
&= \frac{1}{2^{\frac{m_g}{2}}\Gamma(\frac{m_g}{2})}\int_{M_n}^{\infty}e^{-\frac{u}{2}}u^{\frac{m_g}{2}-1}du\\
&= \frac{1}{\Gamma(\frac{m_g}{2})}\int_{M_n/2}^{\infty}e^{-u}u^{\frac{m_g}{2}-1}du, \tag{5.30}
 \end{align*}
where $\Gamma(r)=\int_{0}^{\infty}e^{-u}u^{r-1}du$ denotes the gamma function evaluated at $r>0$.\newline

Now, we state below a result due to Gabcke (2015) \cite{gabcke2015} that is instrumental in completing the remainder of this proof. This is presented as Lemma \ref{lem5} below.
\begin{lemma}\label{lem5}
 When $r\geq 1$ and $c> r+1$,
 \begin{equation*}
 e^{-c}c^{r-1}\leq \int_{c}^{\infty}e^{-u}u^{r-1}du\leq re^{-c}c^{r-1},\notag 
 \end{equation*}
 that is, for sufficiently large $c>0$,
 \begin{equation*}
  \int_{c}^{\infty}e^{-u}u^{r-1}du\lesssim re^{-c}c^{r-1}.\notag   
 \end{equation*}
\end{lemma}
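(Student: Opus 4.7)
The plan is to prove the two inequalities separately; both are standard tail estimates for the upper incomplete gamma function $\int_c^\infty e^{-u}u^{r-1}\,du$.

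The lower bound is immediate from monotonicity. Since $r\geq 1$, the map $u\mapsto u^{r-1}$ is non-decreasing, so $u^{r-1}\geq c^{r-1}$ on $[c,\infty)$. Pulling this constant out of the integral and using $\int_c^\infty e^{-u}\,du=e^{-c}$ gives $e^{-c}c^{r-1}$, which is the desired lower bound.

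For the upper bound the key observation is the derivative identity
$$-\frac{d}{du}\bigl[e^{-u}u^{r-1}\bigr]=e^{-u}u^{r-1}\Bigl(1-\frac{r-1}{u}\Bigr).$$
The factor $1-(r-1)/u$ is non-decreasing in $u$ for $r\geq 1$, so on $[c,\infty)$ it is bounded below by the constant $(c-r+1)/c$, which is strictly positive under $c>r+1$. Substituting this lower bound into the identity and integrating from $c$ to $\infty$ telescopes the left-hand side to $e^{-c}c^{r-1}$, yielding
$$\int_c^\infty e^{-u}u^{r-1}\,du\leq \frac{c}{c-r+1}\,e^{-c}c^{r-1}.$$

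The main (and really the only) subtle step is then to upgrade the prefactor $c/(c-r+1)$ to the cleaner constant $r$. The inequality $c/(c-r+1)\leq r$ rearranges to $(r-1)c\geq r(r-1)$, which is trivial for $r=1$ and equivalent to $c\geq r$ for $r>1$; both cases are comfortably implied by the stated hypothesis $c>r+1$. This constant-matching step is really the reason the lemma is stated with $c>r+1$ rather than merely $c>r-1$. An alternative route via the integration-by-parts recursion $\int_c^\infty e^{-u}u^{r-1}\,du=e^{-c}c^{r-1}+(r-1)\int_c^\infty e^{-u}u^{r-2}\,du$ combined with induction on $\lceil r\rceil$ would also work, but the derivative argument above handles all real $r\geq 1$ uniformly and is the route I would take.
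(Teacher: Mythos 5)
Your proof is correct. The lower bound via $u^{r-1}\geq c^{r-1}$ on $[c,\infty)$ is fine, and your upper-bound argument is sound: the identity $-\frac{d}{du}[e^{-u}u^{r-1}]=e^{-u}u^{r-1}\bigl(1-\frac{r-1}{u}\bigr)$, the lower bound $1-\frac{r-1}{u}\geq\frac{c-r+1}{c}>0$ on $[c,\infty)$ (positivity guaranteed by $c>r+1$), and the telescoping integration give $\int_c^\infty e^{-u}u^{r-1}\,du\leq\frac{c}{c-r+1}e^{-c}c^{r-1}$, and your constant-matching step $\frac{c}{c-r+1}\leq r$, equivalent to $(r-1)(c-r)\geq 0$, indeed holds under the stated hypothesis. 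The one point of comparison worth noting is that the paper does not prove this lemma at all: it is quoted as a known tail bound for the incomplete gamma function due to Gabcke (2015) and used as a black box in the proofs of Theorems \ref{Thm-1} and \ref{Thm-7}. Your derivation is therefore a self-contained, elementary replacement for that citation, valid uniformly for all real $r\geq 1$ (not just integer shape parameters), and it also makes transparent where the hypothesis $c>r+1$ enters — only through $c-r+1>0$ and $c\geq r$, so the lemma in fact holds under the slightly weaker condition $c\geq r$.
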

Thus, using Lemma \ref{lem5} coupled with the \eqref{eq:5.28} and the fact that $M_n\rightarrow\infty$ as $n\rightarrow\infty$, we have, for all sufficiently large $n$, for all $g \notin \mathcal{A}$,
 \begin{align*}\label{eq:5.29}
e^{-\frac{M_n}{2}}M_n^{\frac{m_g}{2}-1}\leq P\left(\frac{n\widehat{\boldsymbol{\beta}}_g^{\mathrm{T}}\mathbf{Q}_{n,g}\widehat{\boldsymbol{\beta}}_g}{ \sigma^2}>M_n\right)\lesssim e^{-\frac{M_n}{2}}M_n^{\frac{s}{2}-1},\tag{5.31}
 \end{align*}
 where $s= \sup_{n \geq 1}\max_{g\in \{1,2,\cdots,G_n\}}m_g$, is finite. 
 Using this observation, and combining \eqref{eq:5.27}-\eqref{eq:5.29}, we have,
 \begin{align*}\label{eq:5.30}
	\sum_{g \notin \mathcal{A}}P\left(E(1-\kappa_g\mid\tau_n,\sigma^2,\mathcal{D})>\frac{1}{2}\right) & \lesssim  G_n(\tau^2_n) ^{a} L(\frac{1}{\tau^2_n}) \bigg[\log( \frac{1}{(\tau^2_n) ^{a} L(\frac{1}{\tau^2_n})})\bigg]^{\frac{s}{2}-1}.
 \tag{5.32}
\end{align*}
Hence, for $a \in [\frac{1}{2},1)$ using \hyperlink{assumption1}{Assumption 1} on $L(\cdot)$, the term of the right-hand side of \eqref{eq:5.30} converges to $0$, as $n\rightarrow \infty$ if $G_n{\tau_n}[\log (\frac{1}{\tau_n})]^{\frac{s}{2}-1} \rightarrow 0$ as $n \to \infty$. Note that \ref{assmp-4} and \ref{assmp-5} imply $G_n{\tau_n}\big[\log (\frac{1}{\tau_n})\big]^{\frac{s}{2}-1} \to 0$ as $n \to \infty$. This completes the proof of \eqref{eq:5.19} when $\frac{1}{2} \leq a<1$. \newline

\textbf{Case (II):} Now we consider the situation $a \geq 1$. Using similar arguments employed to prove \textbf{Case (I)}, one can easily verify that there exists a constant $C_5$ independent of $n$, such that $M_n= 2 \log (\frac{C_5}{\tau_n})$ and 
\begin{align*}\label{eq:5.31}
 \sum_{g \notin \mathcal{A}} P(E(1-\kappa_g\mid\tau_n,\sigma^2,\mathcal{D})>\frac{1}{2}) & \lesssim G_n{\tau_n}\bigg[\log (\frac{1}{\tau_n})\bigg]^{\frac{s}{2}-1}.\tag{5.33}
\end{align*}
Again observe \ref{assmp-4} and \ref{assmp-5} imply $G_n{\tau_n}\big[\log (\frac{1}{\tau_n})\big]^{\frac{s}{2}-1} \to 0$ as $n \to \infty$, for $a \geq 1$. 
Hence under the assumption of \ref{assmp-4} and \ref{assmp-5}, the right hand side of \eqref{eq:5.31} goes to $0$ as $n\rightarrow \infty$, for each fixed $a \geq 1$, which establishes \eqref{eq:5.19}. 
This completes the proof of Theorem \ref{Thm-1}.\newline
\end{proof}
\textbf{Proof of Theorem \ref{Thm-2}:}\\
\begin{proof}
Define $T= {\boldsymbol{\alpha}}^{\mathrm{T}} {\boldsymbol{\Sigma}^{\frac{1}{2}}_{\mathcal{A}}}(\widehat{\boldsymbol{\beta}}_{\mathcal{A}}^{\text{HT}}-\boldsymbol{\beta}_{\mathcal{A}}^0)$. Then $T=T_1+T_2$, where $T_1={\boldsymbol{\alpha}}^{\mathrm{T}} {\boldsymbol{\Sigma}^{\frac{1}{2}}_{\mathcal{A}}}(\widehat{\boldsymbol{\beta}}_{\mathcal{A}}-\boldsymbol{\beta}_{\mathcal{A}}^0)$ and $T_2={\boldsymbol{\alpha}}^{\mathrm{T}} {\boldsymbol{\Sigma}^{\frac{1}{2}}_{\mathcal{A}}}(\widehat{\boldsymbol{\beta}}_{\mathcal{A}}^{\text{HT}}-\widehat{\boldsymbol{\beta}}_{\mathcal{A}})$. Now, it boils down to show that, 
\begin{equation*} \label{eq:5.32}
    T_1  \xrightarrow{d} \mathcal{N}({0}, \sigma^2), \textrm{ as } n \to \infty, \tag{5.34} 
\end{equation*}
and
\begin{equation*} \label{eq:5.33}
    T_2 \xrightarrow{P}0 \textrm{ as } n \to \infty. \tag{5.35}
\end{equation*}
First, we prove \eqref{eq:5.32}. Note that, due to the block-diagonal property of the design matrix $\mathbf{X}$, $\widehat{\boldsymbol{\beta}}_{\mathcal{A}}={\boldsymbol{\Sigma}^{-1}_{\mathcal{A}}}{\mathbf{X}^{\mathrm{T}}_{\mathcal{A}}} \mathbf{y}$ and
using the standard theory of linear models, it readily follows that for $||\boldsymbol{\alpha}||=1$,
\begin{equation*}
    T_1 \sim \mathcal{N}(0,\sigma^2).
\end{equation*}

For the time being, let us assume \eqref{eq:5.33} to be true. Then, combining \eqref{eq:5.32} and \eqref{eq:5.33}, coupled with Slustky's Theorem, the desired asymptotic normality of  $\widehat{\boldsymbol{\beta}}_{\mathcal{A}}^{\text{HT}}$ follows.\newline

We now turn our focus on establishing \eqref{eq:5.33} above. 
Towards that, using Cauchy-Schwarz inequality, we have
\begin{align*} \label{eq:5.34}
    T^2_2 & \leq {(\widehat{\boldsymbol{\beta}}_{\mathcal{A}}^{\text{HT}}-\widehat{\boldsymbol{\beta}}_{\mathcal{A}})}^{\mathrm{T}} {\boldsymbol{\Sigma}_{\mathcal{A}}}(\widehat{\boldsymbol{\beta}}_{\mathcal{A}}^{\text{HT}}-\widehat{\boldsymbol{\beta}}_{\mathcal{A}}) \\
    & \leq C_2 n {(\widehat{\boldsymbol{\beta}}_{\mathcal{A}}^{\text{HT}}-\widehat{\boldsymbol{\beta}}_{\mathcal{A}})}^{\mathrm{T}} (\widehat{\boldsymbol{\beta}}_{\mathcal{A}}^{\text{HT}}-\widehat{\boldsymbol{\beta}}_{\mathcal{A}}) \\
    & = C_2 \sum_{g \in \mathcal{A}}\norm{\sqrt{n}(\widehat{\boldsymbol{\beta}}_g^{\text{HT}} - \widehat{\boldsymbol{\beta}}_g^{})}^2_2 . \tag{5.36}
\end{align*}
The second inequality in \eqref{eq:5.34} follows due to the assumption on the eigenvalues of $\mathbf{X}^{\mathrm{T}}\mathbf{X}$ as given in \ref{assmpan-1} and using the fact that $e_{max}(\mathbf{X}_{\mathcal{A}}^{\mathrm{T}}\mathbf{X}_{\mathcal{A}}) \leq e_{max}(\mathbf{X}^{\mathrm{T}}\mathbf{X})$.\\

Next, observe that using the form of the posterior mean $\widehat{\boldsymbol{\beta}}_g^{\text{PM}}$ as given by \eqref{eq:2.6} coupled with the definition of the half-thresholding estimator $\widehat{\boldsymbol{\beta}}_g^{\text{HT}}$ given by \eqref{eq:2.8}, one may rewrite the difference $\sqrt{n}(\widehat{\boldsymbol{\beta}}_g^{\text{HT}} - \widehat{\boldsymbol{\beta}}_g^{})$ as
 \begin{align*}\label{eq:5.35}
  \sqrt{n}(\widehat{\boldsymbol{\beta}}_g^{\text{HT}} - \widehat{\boldsymbol{\beta}}_g^{})
       &= \sqrt{n} \bigg[ E(1-\kappa_g\mid\tau_n,\sigma^2,\mathcal{D}) I\bigg\{E(1-\kappa_g\mid\tau_n,\sigma^2,\mathcal{D}) > 0.5\bigg\}-1\bigg]\widehat{\boldsymbol{\beta}}_g\\  
       &= -\sqrt{n} \widehat{\boldsymbol{\beta}}_g E(\kappa_g\mid\tau_n,\sigma^2,\mathcal{D})- \sqrt{n} \widehat{\boldsymbol{\beta}}_g^{} E(1-\kappa_g\mid\tau_n,\sigma^2,\mathcal{D})I\bigg\{E(1-\kappa_g\mid\tau_n,\sigma^2,\mathcal{D}) \leq 0.5\bigg\}. \tag{5.37} 
 \end{align*}
Note that
\begin{equation*}\label{eq:5.36}
     E(1-\kappa_g\mid\tau_n,\sigma^2,\mathcal{D}) \leq 0.5  \textrm{ if and only if } E(\kappa_g\mid\tau_n,\sigma^2,\mathcal{D}) \geq 0.5.\tag{5.38}
 \end{equation*}
Thus,
\begin{equation*}\label{eq:5.37}
 0\leq E(1-\kappa_g\mid\tau_n,\sigma^2,\mathcal{D})I\bigg\{E(1-\kappa_g\mid\tau_n,\sigma^2,\mathcal{D}) \leq 0.5\bigg\}  \leq E(\kappa_g\mid\tau_n,\sigma^2,\mathcal{D}),\tag{5.39}
\end{equation*}
whence
\begin{equation*}\label{eq:5.38}
  \lVert \sqrt{n} \widehat{\boldsymbol{\beta}}_g^{} E(1-\kappa_g\mid\tau_n,\sigma^2,\mathcal{D})I\bigg\{E(1-\kappa_g\mid\tau_n,\sigma^2,\mathcal{D}) \leq 0.5\bigg\} \rVert  \leq  \lVert \sqrt{n} \widehat{\boldsymbol{\beta}}_g^{} E(\kappa_g\mid\tau_n,\sigma^2,\mathcal{D}) \rVert.\tag{5.40}  
\end{equation*}
Now to establish \eqref{eq:5.33}, let us first define the following random variables:
$$W_{n,g}=\frac{n\widehat{\boldsymbol{\beta}}_g^{\mathrm{T}}\mathbf{Q}_{n,g}\widehat{\boldsymbol{\beta}}_g}{\sigma^2}, \textrm{ and } U_{n,g}= W_{n,g} E(\kappa^2_g\mid\tau_n,\sigma^2,\mathcal{D}).$$ 
Combining \eqref{eq:5.34} - \eqref{eq:5.38} together with the triangle inequality for the $\ell_2$ norm, we obtain 
\begin{align*} \label{eq:5.39}
    T^2_2 & \leq 4 \sum_{g \in \mathcal{A}} n\widehat{\boldsymbol{\beta}}_g^{\mathrm{T}}\widehat{\boldsymbol{\beta}}_g E(\kappa^2_g\mid\tau_n,\sigma^2,\mathcal{D}) \\
    & \leq \frac{4 \sigma^2}{ C_1} \sum_{g \in \mathcal{A}} \frac{n\widehat{\boldsymbol{\beta}}_g^{\mathrm{T}}\mathbf{Q}_{n,g}\widehat{\boldsymbol{\beta}}_g}{\sigma^2} E(\kappa^2_g\mid\tau_n,\sigma^2,\mathcal{D}) \\
    &= \frac{4 \sigma^2}{ C_1} \sum_{g \in \mathcal{A}} U_{ng}.  \tag{5.41}
\end{align*}
The second inequality above follows using
\ref{assmpan-1} and the fact that $\textrm{ for all g } \in \mathcal{A}, e_{min}(\frac{\mathbf{X}_g^{\mathrm{T}}\mathbf{X}_g}{n}) \geq e_{min} (\frac{\mathbf{X}^{\mathrm{T}}\mathbf{X}}{n})$. Thus to prove \eqref{eq:5.33}, it is enough to show that 
\begin{equation*} \label{eq:5.40}
     \sum_{g \in \mathcal{A}} U_{ng}  \xrightarrow{P} 0 \hspace*{0.2cm} \text{as} \hspace*{0.2cm}  n \to \infty.
    \tag{5.42}  
\end{equation*}

Using the above definitions coupled with \eqref{eq:3.1}, we obtain
\begin{align*} \label{eq:5.41}
    U_{n,g} &=  W_{n,g} \frac{\int_{0}^{1}\kappa^2_g \cdot \kappa_g^{(a+\frac{m_g}{2}-1)}(1-\kappa_g)^{-a-1}L\left(\frac{1}{\tau^2_n}(\frac{1}{\kappa_g}-1)\right)\exp\left(-\kappa_g\cdot \frac{ W_{n,g} }{2}\right) d \kappa_g }{\int_{0}^{1}\kappa_g^{(a+\frac{m_g}{2}-1)}(1-\kappa_g)^{-a-1}L\left(\frac{1}{\tau^2_n}(\frac{1}{\kappa_g}-1)\right)\exp\left(-\kappa_g\cdot \frac{ W_{n,g} }{2}\right)d \kappa_g} \\
    &= J(W_{n,g},\tau) \hspace*{0.05cm},\text{ say}\hspace*{0.05cm}.\tag{5.43}
\end{align*}
Next, we follow the arguments used in Lemma 3 of Ghosh and Chakrabarti (2017) \cite{ghosh2017asymptotic} to find an upper bound to $J(W_{n,g},\tau)$.
First note that, given any $c>2$, we can find $\eta,q \in (0,1)$ such that $c=\frac{2}{\eta(1-q)}$. Following Ghosh and Chakrabarti (2017) \cite{ghosh2017asymptotic}
there exists a non-negative measurable function $h(W_{n,g},\tau)=h_1(W_{n,g},\tau)+h_2(W_{n,g},\tau)$, where $h_1(W_{n,g},\tau)=C_{*} [W_{n,g} \int_{0}^{\frac{W_{n,g}}{1+t_0}} e^{-\frac{u}{2}} u^{m_g+a-1} du]^{-1}$ and
$h_2(W_{n,g},\tau)=C^{*} W_{n,g} {\tau}^{-2a} e^{-\frac{\eta (1-q)}{2} W_{n,g}}$ where $t_0$ is as in \hyperlink{assumption1}{Assumption 1} and $C_{*}$ and $C^{*}$ are two constants which depends on $a,\eta,q,L(\cdot)$ and satisfies: \newline
for any $W_{n,g}$, 
\begin{equation*}\label{eq:5.42}
    J(W_{n,g},\tau) \leq h(W_{n,g},\tau), \tag{5.44}
\end{equation*}
and we also have for any $\rho >c$,
\begin{equation*} \label{eq:5.43}
    \lim_{\tau \to 0} \sup_{W_{n,g}> 2a \rho \frac{1}{\sqrt{\tau} \log(\frac{1}{\tau})} } h(W_{n,g},\tau)=0.\tag{5.45}
\end{equation*}

Let $\epsilon>0$ be given. Then
\begin{align*}
    P(\sum_{g \in \mathcal{A}} U_{ng} >\epsilon) & \leq \sum_{g \in \mathcal{A}} P( U_{ng} > \frac{\epsilon}{|A|}) .
\end{align*}

Let us fix some $c>2$ and any $\rho>c$. Let $B_n$ and $C_n$ denote the events $\{ U_{n,g}> \frac{\epsilon}{|A|} \}$ and $\{W_{n,g}> 2a \rho \frac{1}{\sqrt{\tau} \log(\frac{1}{\tau})} \}$, respectively. Then,
\begin{align*}\label{eq:5.44}
  \sum_{g \in \mathcal{A}}  P(U_{n,g}>\frac{\epsilon}{|A|} )&= \sum_{g \in \mathcal{A}} P(B_n) \\
    &= \sum_{g \in \mathcal{A}} P(B_n \cap C_n)+ \sum_{g \in \mathcal{A}} P(B_n \cap C^{c}_n) \\
    & \leq \sum_{g \in \mathcal{A}} P(B_n \cap C_n)+ \sum_{g \in \mathcal{A}} P( C^{c}_n).\tag{5.46}
\end{align*}
Using \eqref{eq:5.41} and \eqref{eq:5.42} along with the form of $h_k(W_{n,g},\tau), k=1,2$, it follows that for sufficiently large $n$, under the assumption \ref{assmpan-3},
\begin{align*} \label{eq:5.45}
    \sup_{W_{n,g}> 2a \rho \frac{1}{\sqrt{\tau} \log(\frac{1}{\tau})} } h_1(W_{n,g},\tau) & \leq \sqrt{\tau} \log(\frac{1}{\tau}) =o(\frac{1}{|\mathcal{A}|}). \tag{5.47}
\end{align*}
Also note that
\begin{align*} \label{eq:5.46}
    \sup_{W_{n,g}> 2a \rho \frac{1}{\sqrt{\tau} \log(\frac{1}{\tau})} } h_2(W_{n,g},\tau) & \leq \frac{\tau^{-2a-\frac{1}{2}}}{\log (\frac{1}{\tau})} e^{- \frac{a\rho \eta (1-q)}{\sqrt{\tau} \log (\frac{1}{\tau})}} =o(\frac{1}{|\mathcal{A}|}). \tag{5.48}
\end{align*}
Observe that, with the definition of $B_n$, and using \eqref{eq:5.45} and \eqref{eq:5.46} we have, for sufficiently large $n$, 
\begin{equation*} \label{eq:5.47}
\textrm{ for all } g \in \mathcal{A}, P(B_n \cap C_n)=0.\tag{5.49} 
\end{equation*}
This implies the first term in the right-hand side of \eqref{eq:5.44} goes to zero and we are left with the second term only. \\
 For the second term, under the assumption of
 \ref{assmpan-2} and \ref{assmpan-3}, using similar set of arguments used in \eqref{eq:5.21}-\eqref{eq:5.26}, we can show that
\begin{align*} \label{eq:5.48}
\lim_{n\rightarrow\infty} \sum_{g \in \mathcal{A}}  P( C^{c}_n)&=0.   \tag{5.50}
\end{align*}
Since $\epsilon >0$ is arbitrary, combining \eqref{eq:5.41}-\eqref{eq:5.48} ensures \eqref{eq:5.40} holds. This completes the proof of Theorem \ref{Thm-2}.
\end{proof}

\textbf{Proof of Theorem \ref{Thm-7}:}\\
\begin{proof}
    As noted before,
\begin{align*}\label{eq:5.75}
	P(\mathcal{A}_n \neq \widehat{\mathcal{A}}_n) \leq \sum_{g \in \mathcal{A}} P(E(1-\kappa_g\mid
{\widehat{\tau}}^{\text{EB}},\sigma^2,\mathcal{D}) < \frac{1}{2})+\sum_{g \notin \mathcal{A}}P(E(1-\kappa_g\mid{\widehat{\tau}}^{\text{EB}},\sigma^2,\mathcal{D})>\frac{1}{2}).\tag{5.51}
\end{align*}
To prove \eqref{eq:5.75}, it suffices to show 
\begin{align*}\label{eq:5.76}
\sum_{g \in \mathcal{A}} P(E(1-\kappa_g\mid{\widehat{\tau}}^{\text{EB}},\sigma^2,\mathcal{D}) < \frac{1}{2})=o(1), \textrm{ as } n\rightarrow\infty,\tag{5.52}    
\end{align*}
and
\begin{align*}\label{eq:5.77}
\sum_{g \notin \mathcal{A}}P(E(1-\kappa_g\mid{\widehat{\tau}}^{\text{EB}},\sigma^2,\mathcal{D})>\frac{1}{2})=o(1), \textrm{ as } n\rightarrow\infty,\tag{5.53}    
\end{align*}
both when $0.5< a<1$, and $a\geq 1$. \\
Note that, for fixed $\mathcal{D}=\{\mathbf{y}\}$ and $\sigma^2$, $E(\kappa_g\mid\tau,\sigma^2,\mathcal{D})$ is a non-increasing function of $\tau$. Moreover, $\widehat{\tau}^{\text{EB}}\geq \gamma_n$, where $\gamma_n=\frac{1}{G_n}$, for $n\geq 1$. Combining these two facts, we obtain
 \begin{align*} \label{eq:5.78}
\sum_{g \in \mathcal{A}}  P(E(\kappa_g\mid\widehat{\tau}^{\text{EB}},\sigma^2,\mathcal{D})> \frac{1}{2})
	& \leq \sum_{g \in \mathcal{A}} P(E(\kappa_g\mid\gamma_n,\sigma^2,\mathcal{D})> \frac{1}{2}). \tag{5.54}
\end{align*}
Observe that, the sequence $\gamma_n=G^{-1}_n$ for $n\geq 1$ satisfies $\log (\frac{1}{\tau_n}) \asymp \log(G_n)$. Therefore, under assumptions \ref{assmp-1}-\ref{assmp-3} of Theorem \ref{Thm-1}, using exactly the same set of arguments employed in proving \textbf{Part-I} of Theorem  \ref{Thm-1} (when $\tau$ was a tuning parameter), one has
\begin{equation} \label{eq:5.79}
     \lim_{n\rightarrow \infty}\sum_{g \in \mathcal{A}} P(E(\kappa_g\mid\gamma_n,\sigma^2,\mathcal{D})> \frac{1}{2})=0. \tag{5.55}
\end{equation}
Therefore, \eqref{eq:5.78} and \eqref{eq:5.79} together yield
\begin{equation}
     \lim_{n\rightarrow \infty}\sum_{g \in \mathcal{A}} P(E(\kappa_g\mid\widehat{\tau}^{\text{EB}},\sigma^2,\mathcal{D})> \frac{1}{2})=0,\notag
\end{equation}
which completes the proof of \eqref{eq:5.76}.
Now, we are left to prove \eqref{eq:5.77}. Note that, for any $\alpha_n>0$,
 \begin{align*}\label{eq:5.50}
  P(E(1-\kappa_g\mid\widehat{\tau}^{\text{EB}},\sigma^2,\mathcal{D})> \frac{1}{2})
  &=  P(E(1-\kappa_g\mid\widehat{\tau}^{\text{EB}},\sigma^2,\mathcal{D})> \frac{1}{2}, \ {\widehat{\tau}}^{\text{EB}} \leq 2\alpha_{n}) \ +\notag\\
  &  \qquad P(E(1-\kappa_g\mid\widehat{\tau}^{\text{EB}},\sigma^2,\mathcal{D})> \frac{1}{2}, \ {\widehat{\tau}}^{\text{EB}}> 2\alpha_{n}).\tag{5.56}
 \end{align*} 
To complete the proof, we now appropriately choose $\{\alpha_{n}\}_{n\geq 1}>0$ such that $\alpha_n \to 0$ as $n \to \infty$ so that both 
\begin{align*}\label{eq:5.51}
   \sum_{g \notin \mathcal{A}}P(E(1-\kappa_g\mid{\widehat{\tau}}^{\text{EB}},\sigma^2,\mathcal{D})>\frac{1}{2}, {\widehat{\tau}^{\text{EB}}}> 2\alpha_{n})=o(1), \textrm{ as } n\rightarrow\infty \tag{5.57}
\end{align*}
and
\begin{align*}\label{eq:5.49}
\sum_{g \notin \mathcal{A}}P(E(1-\kappa_g\mid{\widehat{\tau}}^{\text{EB}},\sigma^2,\mathcal{D})>\frac{1}{2}, {\widehat{\tau}^{\text{EB}}} \leq 2\alpha_{n})=o(1), \textrm{ as } n\rightarrow\infty.\tag{5.58}
\end{align*}
For studying \eqref{eq:5.51}, we define

$$\widehat{\tau}_1=\frac{1}{G_n}, \textrm{ and } {\widehat{\tau}_2}=\frac{1}{c_2G_n}\sum_{g=1}^{G_n}1\bigg\{\frac{n\widehat{\boldsymbol{\beta}}_g^{\mathrm{T}}\mathbf{Q}_{n,g}\widehat{\boldsymbol{\beta}}_g}{\sigma^2}>c_1 \log{G_n}\bigg\},$$
where $c_1\geq 2$, and $c_2\geq 1$.\newline

Clearly,
$${\widehat{\tau}^{\text{EB}}}=\max\big\{\widehat{\tau}_1,{\widehat{\tau}_2}\big\}.$$
Therefore we have,
\begin{align*}\label{eq:5.57}
  P(E(1-\kappa_g\mid\widehat{\tau}^{\text{EB}},\sigma^2,\mathcal{D})> \frac{1}{2}, \ {\widehat{\tau}^{\text{EB}}}> 2\alpha_{n})
  &\leq P({\widehat{\tau}^{\text{EB}}}> 2\alpha_{n})\\
 &\leq P(\widehat{\tau}_1> 2\alpha_{n})+P({\widehat{\tau}_2}> 2\alpha_{n}).\tag{5.59}
\end{align*}
We note that if $\alpha_n>0$ is such that
\begin{align*}\label{eq:5.58}
    \frac{1}{G_n} \leq 2\alpha_n, \textrm{ for all sufficiently large } n,\tag{5.60}
\end{align*}
whence
\begin{align*}\label{eq:5.59}
 P(\widehat{\tau}_1> 2\alpha_{n})=0, \textrm{ for all sufficiently large } n.\tag{5.61}   
\end{align*}
Thus, \eqref{eq:5.57} coupled with \eqref{eq:5.59} yields
\begin{align*}\label{eq:5.60}
  P(E(1-\kappa_g\mid\widehat{\tau}^{\text{EB}},\sigma^2,\mathcal{D})> \epsilon_0, \ {\widehat{\tau}^{\text{EB}}}> 2\alpha_{n}) &\leq P({\widehat{\tau}_2}> 2\alpha_{n}),  \tag{5.62} 
\end{align*}
for all sufficiently large $n$ for $\alpha_n$ satisfying \eqref{eq:5.58}.\newline
Let us define
$${\widehat{\tau}_3}=\frac{1}{c_2G_n}\sum_{g\in\mathcal{A}}1\bigg\{\frac{n\widehat{\boldsymbol{\beta}}_g^{\mathrm{T}}\mathbf{Q}_{n,g}\widehat{\boldsymbol{\beta}}_g}{\sigma^2}>c_1 \log{G_n}\bigg\},$$
and 
$${\widehat{\tau}_4}=\frac{1}{c_2G_n}\sum_{g\notin\mathcal{A}}1\bigg\{\frac{n\widehat{\boldsymbol{\beta}}_g^{\mathrm{T}}\mathbf{Q}_{n,g}\widehat{\boldsymbol{\beta}}_g}{\sigma^2}>c_1 \log{G_n}\bigg\},$$
so that $${\widehat{\tau}_2}={\widehat{\tau}_3}+{\widehat{\tau}_4}.$$
Clearly,
\begin{align*}\label{eq:5.61}
P({\widehat{\tau}_2}> 2\alpha_{n})\leq P({\widehat{\tau}_3}> \alpha_{n})+P({\widehat{\tau}_4}> \alpha_{n}). \tag{5.63}   
\end{align*}

Observe that
\begin{align*}\label{eq:5.62}
{\widehat{\tau}_3}=\frac{1}{c_2G_n}\sum_{g\in\mathcal{A}}1\bigg\{\frac{n\widehat{\boldsymbol{\beta}}_g^{\mathrm{T}}\mathbf{Q}_{n,g}\widehat{\boldsymbol{\beta}}_g}{\sigma^2}>c_1 \log{G_n}\bigg\} \leq \frac{1}{c_2G_n}\sum_{g\in\mathcal{A}}1=\frac{G_{A_n}}{c_2G_n}.\tag{5.64}   
\end{align*}
We now observe that if $\alpha_n>0$ is chosen such that
\begin{align*}\label{eq:5.63}
 \frac{G_{A_n}}{c_2G_n} \leq \alpha_n,  \textrm{ for all sufficiently large } n, \tag{5.65} 
\end{align*}
then
\begin{align*}\label{eq:5.64}
 P({\widehat{\tau}_3}> \alpha_{n})=0,  \textrm{ for all sufficiently large } n. \tag{5.66}   
\end{align*}
Note condition \eqref{eq:5.63} implies condition \eqref{eq:5.58} and therefore \eqref{eq:5.59} is automatically satisfied for this choice of $\alpha_n$.
For bounding the second term in the right-hand side of \eqref{eq:5.61}, we consider the generalized version of Chernoff-Hoeffding bound for independent but non i.i.d. sequence of random variables, which is stated in the next lemma.
\begin{lemma} \label{lem6}
    Let $Z_1, Z_2, \cdots, Z_m$ be $m$ independent $0-1$ random variables with $\mathbb{E}(Z_i)=p_i$, $i=1,2,\cdots,m$. Let $Z=\sum_{i=1}^{m}Z_i$, $\mu=\mathbb{E}(Z)=\sum_{i=1}^{m}p_i$ and $p=\frac{\mu}{m}$. Then
    \begin{equation*}
        \mathbb{P}(Z \geq \mu+\lambda) \leq \exp{ -\{mH_p(p+\frac{\lambda}{m})\}}, \hspace*{0.3cm} \text{for} \hspace*{0.3cm} 0<\lambda<m-\mu,
    \end{equation*}
    and 
    \begin{equation*}
        \mathbb{P}(Z \leq \mu-\lambda) \leq \exp{ -\{mH_{1-p}(1-p+\frac{\lambda}{m})\}}, \hspace*{0.3cm} \text{for} \hspace*{0.3cm} 0<\lambda<\mu,
    \end{equation*}
    where $H_p(x)=x \log (\frac{x}{p})+(1-x) \log (\frac{1-x}{1-p})$ is the relative entropy of $x$ w.r.t. $p$.
\end{lemma}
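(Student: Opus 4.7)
The plan is to use the standard Chernoff--Hoeffding argument combined with a Jensen-based reduction that collapses the heterogeneous Bernoulli parameters $p_i$ to their average $p=\mu/m$. For the upper tail, fix $t>0$ and apply Markov's inequality to $e^{tZ}$ to obtain
\[
\mathbb{P}(Z\geq \mu+\lambda)\leq e^{-t(\mu+\lambda)}\,\mathbb{E}\bigl(e^{tZ}\bigr)=e^{-t(\mu+\lambda)}\prod_{i=1}^{m}\bigl(1-p_i+p_i e^{t}\bigr),
\]
where the last equality uses independence and the Bernoulli moment generating function $\mathbb{E}(e^{tZ_i})=1-p_i+p_i e^{t}$.

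The critical step is to symmetrize this product. Since the map $q\mapsto \log(1-q+q e^{t})=\log(1+q(e^{t}-1))$ has second derivative $-(e^{t}-1)^{2}/(1+q(e^{t}-1))^{2}\leq 0$ and is therefore concave on $[0,1]$, Jensen's inequality applied to $p_1,\ldots,p_m$ gives
\[
\frac{1}{m}\sum_{i=1}^{m}\log(1-p_i+p_i e^{t})\leq \log(1-p+p e^{t}),
\]
whence $\mathbb{P}(Z\geq \mu+\lambda)\leq e^{-t(\mu+\lambda)}(1-p+p e^{t})^{m}$. The right-hand side is now the Chernoff bound for an i.i.d.\ Bernoulli$(p)$ sum, which reduces the analysis to a one-parameter optimization.

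I would then optimize over $t>0$. Writing $x=p+\lambda/m$, straightforward differentiation yields the minimizer $t^{*}=\log\{x(1-p)/[p(1-x)]\}$, which is positive because $0<\lambda<m-\mu$ forces $p<x<1$. Using the identity $1-p+p e^{t^{*}}=(1-p)/(1-x)$, the exponent collapses to
\[
-m t^{*}x+m\log\frac{1-p}{1-x}=-m\bigl[x\log(x/p)+(1-x)\log((1-x)/(1-p))\bigr]=-m H_{p}(x),
\]
establishing the first inequality. The lower-tail bound follows by a duality argument: set $Y_i=1-Z_i$, so that the $Y_i$ are independent Bernoulli$(1-p_i)$ with sum $Y=m-Z$ of mean $m-\mu$, and note that $\{Z\leq \mu-\lambda\}=\{Y\geq (m-\mu)+\lambda\}$. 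Applying the upper-tail bound just proved to $Y$ (with $p$ replaced by $1-p$) delivers $\exp\{-m H_{1-p}(1-p+\lambda/m)\}$, which finishes the proof. The only genuinely nontrivial ingredient is the Jensen step that permits the symmetrization of the heterogeneous $p_i$'s; the remaining optimization and the duality reduction are the textbook Chernoff calculation for a single Bernoulli parameter.
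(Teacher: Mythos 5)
Your proof is correct. Note that the paper itself gives no proof of this lemma: it is invoked as the ``generalized version of the Chernoff--Hoeffding bound'' for independent but non-identically distributed Bernoulli variables, i.e.\ as a known classical result (it is Hoeffding's 1963 theorem), so there is no internal argument to compare against. Your write-up is the standard derivation of that result and is complete: the exponential Markov step, the symmetrization of $\prod_i(1-p_i+p_ie^{t})$ to $(1-p+pe^{t})^m$ via concavity of $q\mapsto\log(1+q(e^{t}-1))$ (equivalently, the AM--GM inequality Hoeffding originally used), the explicit minimizer $t^{*}=\log\{x(1-p)/[p(1-x)]\}$ with $x=p+\lambda/m\in(p,1)$ guaranteed by $0<\lambda<m-\mu$, the algebra collapsing the exponent to $-mH_p(x)$, and the reduction of the lower tail to the upper tail through $Y_i=1-Z_i$ all check out. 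The only steps left implicit are routine: convexity of $t\mapsto -tx+\log(1-p+pe^{t})$ (the centered log-MGF), which certifies that the stationary point is the global minimizer over $t>0$, and the degenerate cases $p\in\{0,1\}$, where the stated bounds hold trivially with the usual convention $e^{-\infty}=0$. Neither omission is a gap.
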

Note that
\begin{align*}\label{eq:5.65}
 P({\widehat{\tau}_2}> 2\alpha_{n})
 &\leq P({\widehat{\tau}_4}> \alpha_{n})\\
 &= P\bigg[\sum_{g\notin\mathcal{A}}1\bigg\{\frac{n\widehat{\boldsymbol{\beta}}_g^{\mathrm{T}}\mathbf{Q}_{n,g}\widehat{\boldsymbol{\beta}}_g}{\sigma^2}>c_1 \log{G_n}\bigg\}>c_2\alpha_n G_n\bigg]\\
 &= P\bigg[\frac{1}{(G_n-G_{A_n})}\sum_{g\notin\mathcal{A}}1\bigg\{\frac{n\widehat{\boldsymbol{\beta}}_g^{\mathrm{T}}\mathbf{Q}_{n,g}\widehat{\boldsymbol{\beta}}_g}{\sigma^2}>c_1 \log{G_n}\bigg\} > c_2 \frac{\alpha_n G_n}{(G_n-G_{A_n})}\bigg] \\ 
& \leq P\bigg[\frac{1}{(G_n-G_{A_n})}\sum_{g\notin\mathcal{A}}1\bigg\{\frac{n\widehat{\boldsymbol{\beta}}_g^{\mathrm{T}}\mathbf{Q}_{n,g}\widehat{\boldsymbol{\beta}}_g}{\sigma^2}>c_1 \log{G_n}\bigg\} \   \geq \alpha_n\bigg] \\
&= P \bigg[\sum_{g\notin\mathcal{A}}1\bigg\{\frac{n\widehat{\boldsymbol{\beta}}_g^{\mathrm{T}}\mathbf{Q}_{n,g}\widehat{\boldsymbol{\beta}}_g}{\sigma^2}>c_1 \log{G_n}\bigg\} \   \geq \alpha_n(G_n-G_{A_n}) \bigg] \\
&= P\bigg[S_n - \mathbb{E}(S_n) \geq \alpha_n(G_n-G_{A_n}) -  \mathbb{E}(S_n) \bigg ]
\ \textrm{ say,}\tag{5.67}
\end{align*}
where the second inequality holds due to $c_2 G_n>G_n-G_{A_n}$ and
\begin{align*}
S_n=\sum_{g\notin\mathcal{A}}1\bigg\{\frac{n\widehat{\boldsymbol{\beta}}_g^{\mathrm{T}}\mathbf{Q}_{n,g}\widehat{\boldsymbol{\beta}}_g}{\sigma^2}>c_1 \log{G_n}\bigg\}.    
\end{align*}
To find an upper bound for \eqref{eq:5.65}, we use Lemma \ref{lem6} by taking $m=G_n-G_{A_n}$, $Z_i= 1\bigg\{\frac{n\widehat{\boldsymbol{\beta}}_g^{\mathrm{T}}\mathbf{Q}_{n,g}\widehat{\boldsymbol{\beta}}_g}{\sigma^2}>c_1 \log{G_n}\bigg\}$, 
$\mathbb{E}(Z_i)= p_i= P\left({\frac{n\widehat{\boldsymbol{\beta}}_g^{\mathrm{T}}\mathbf{Q}_{n,g}\widehat{\boldsymbol{\beta}}_g}{\sigma^2}>c_1 \log{G_n}}\right) \leq m_g G^{-\frac{c_1}{2}}_n (\log G_n)^{\frac{m_g}{2}-1}$, $\mu \leq s G^{-\frac{c_1}{2}+1}_n (\log G_n)^{\frac{s}{2}-1}$, and $p=\frac{\mu}{G_n-G_{A_n}} \leq sG^{-\frac{c_1}{2}}_n (\log G_n)^{\frac{s}{2}-1}$ , where $s$ denotes the maximum of the group size. For $\lambda=\alpha_n(G_n-G_{A_n}) -\mu$, we have $0<\lambda<m-\mu$. Also, we have $p+\frac{\lambda}{G_n-G_{\mathcal{A}_n}}=\alpha_n$.
Hence,
\begin{align*}\label{eq:5.66}
    H_p(p+\frac{\lambda}{G_n-G_{A_n}}) &= \alpha_n \log (\frac{\alpha_n}{p}) +(1-\alpha_n) \log (\frac{1-\alpha_n}{1-p}).\tag{5.68}
\end{align*}
Also, note that, since $c_1 \geq 2$, $\frac{p}{\alpha_n} \to 0$ as $n \to \infty$ under the assumption that $G^{\epsilon_1}_n \lesssim G_{\mathcal{A}_n} \lesssim G^{\epsilon_2}_n$ for some $0< \epsilon_1 <\epsilon_2<\frac{1}{2}$. \\
Recall that, $\frac{\log (\frac{1}{1-y})}{y} \to 1$ as $y \to 0$. Hence, with $y=\frac{p-\alpha_n}{1-\alpha_n}$, the second term in the right hand side of \eqref{eq:5.66} is of the form
\begin{align*}\label{eq:5.67}
     \log (\frac{1-\alpha_n}{1-p}) &= \frac{p-\alpha_n}{1-\alpha_n} (1+o(1)), \tag{5.69}
\end{align*}
where $o(1)$ depends only on $n$ such that $\lim_{n \to \infty} o(1)=0$. Hence, using \eqref{eq:5.66} and \eqref{eq:5.67}, an lower bound of $H_p(\alpha_n)$ is given by
\begin{align*}\label{eq:5.68}
    H_p(\alpha_n) &= \alpha_n \log (\frac{\alpha_n}{p}) +(1-\alpha_n)\cdot\frac{(p-\alpha_n)}{(1-\alpha_n)} (1+o(1)) \\
    &= \alpha_n \log (\frac{\alpha_n}{p}) (1+o(1)) \\
    & \gtrsim \alpha_n (1+o(1)).\tag{5.70}
\end{align*}
With the use of Lemma \ref{lem6} and \eqref{eq:5.68} with the assumption $G_{A_n}=o(G_n)$, the upper bound of \eqref{eq:5.65} is obtained as
\begin{align*} \label{eq:5.69}
   P({\widehat{\tau}_2}> 2\alpha_{n})
 &\leq   P\bigg[\frac{1}{(G_n-G_{A_n})}\sum_{g\notin\mathcal{A}}1\bigg\{\frac{n\widehat{\boldsymbol{\beta}}_g^{\mathrm{T}}\mathbf{Q}_{n,g}\widehat{\boldsymbol{\beta}}_g}{\sigma^2}>c_1 \log{G_n}\bigg\} \   \geq \alpha_n\bigg] \\
 & \leq e^{-{\alpha_n G_n}(1+o(1))} \\
    & \leq e^{-\frac{ G_{A_n}}{c_2}(1+o(1))}, \tag{5.71}
\end{align*}
where inequality in the last line holds due to \eqref{eq:5.63}. By choosing  
$\alpha_n=c_2\frac{G_{A_n}}{G_n}$, we immediately see that $\alpha_n \to 0$ and satisfies \eqref{eq:5.58} and \eqref{eq:5.63}. Thus, with the above choice of $\alpha_n$, and combining \eqref{eq:5.60} and \eqref{eq:5.69}, we obtain
\begin{align*}
   \sum_{g \notin \mathcal{A}} P(E(1-\kappa_g\mid\widehat{\tau}^{\text{EB}},\sigma^2,\mathcal{D})> \frac{1}{2}, \ {\widehat{\tau}^{\text{EB}}}> 2\alpha_{n}) 
  &\leq G_n e^{-\frac{ G_{A_n}}{c_2}(1+o(1))} .
\end{align*}
Since, $G^{\epsilon_1}_n \lesssim G_{\mathcal{A}_n} \lesssim G^{\epsilon_2}_n$ for some $0< \epsilon_1 <\epsilon_2<\frac{1}{2}$, we can conclude that
\begin{align*}\label{eq:5.80}
\sum_{g \notin \mathcal{A}}P(E(1-\kappa_g\mid{\widehat{\tau}}^{\text{EB}},\sigma^2,\mathcal{D})>\frac{1}{2}, {\widehat{\tau}^{\text{EB}}}> 2\alpha_{n})=o(1), \textrm{ as } n\rightarrow\infty.\tag{5.72}    
\end{align*}
We now proceed to prove that
\begin{align*}\label{eq:5.81}
\sum_{g \notin \mathcal{A}}P(E(1-\kappa_g\mid{\widehat{\tau}}^{\text{EB}},\sigma^2,\mathcal{D})>\frac{1}{2}, {\widehat{\tau}^{\text{EB}}} \leq 2\alpha_{n})=o(1), \textrm{ as } n\rightarrow\infty.\tag{5.73}    
\end{align*}

\textbf{Case-1} $a\geq 1$.\\
Now using the monotonicity of the shrinkage coefficient and then by Markov's inequality, the term in the left hand side of \eqref{eq:5.81} can be bounded as
\begin{align*} \label{eq:5.82}
  &  \sum_{g \notin \mathcal{A}}P(E(1-\kappa_g\mid{\widehat{\tau}}^{\text{EB}},\sigma^2,\mathcal{D})>\frac{1}{2}, {\widehat{\tau}}^{\text{EB}} \leq 2 \alpha_n) \\
   & \leq  \sum_{g \notin \mathcal{A}}P(E(1-\kappa_g\mid 2 \alpha_n,\sigma^2,\mathcal{D})>\frac{1}{2}) \\
   & \leq 2 \sum_{g \notin \mathcal{A}} E \left(E(1-\kappa_g\mid 2 \alpha_n,\sigma^2,\mathcal{D})\right),\tag{5.74}
\end{align*}
where the outer expectation is w.r.t. to $W_{n,g}=\frac{n\widehat{\boldsymbol{\beta}}_g^{\mathrm{T}}\mathbf{Q}_{n,g}\widehat{\boldsymbol{\beta}}_g}{\sigma^2}$. Since, by definition, $E(1-\kappa_g\mid 2 \alpha_n,\sigma^2,\mathcal{D}) \leq 1$, so, we have
\begin{align*} \label{eq:5.83}
    E(1-\kappa_g\mid 2 \alpha_n,\sigma^2,\mathcal{D}) &= E(1-\kappa_g\mid 2 \alpha_n,\sigma^2,\mathcal{D}) 1_{\{W_{n,g} >2c_1 \log G_n\}}+E(1-\kappa_g\mid 2 \alpha_n,\sigma^2,\mathcal{D}) 1_{\{W_{n,g} \leq 2c_1 \log G_n\}}\\
    & \leq 1_{\{W_{n,g} >2c_1 \log G_n\}}+E(1-\kappa_g\mid 2 \alpha_n,\sigma^2,\mathcal{D})  1_{\{W_{n,g} \leq 2c_1 \log G_n\}}.\tag{5.75}
\end{align*}
Next, with the use of arguments similar to Lemma 1 of Paul and Chakrabarti (2023) \cite{paul2022posterior}, 
we provide an upper bound on $E(1-\kappa_g\mid \tau,\sigma^2,\mathcal{D})$ for any $\tau \in (0,1)$.\\
\begin{align*} \label{eq:5.84}
    E(1-\kappa_g\mid \tau,\sigma^2,\mathcal{D}) & \leq \left(\tau^2 e^{\frac{W_{n,g}}{4}}+ K \int_{1}^{\infty} \frac{t \tau^2}{1+t \tau^2} \cdot \frac{1}{{(1+t \tau^2)}^{\frac{m_g}{2}}} t^{-a-1} L(t) e^{\frac{t \tau^2}{1+t \tau^2} \cdot \frac{W_{n,g}}{2}} dt \right) (1+o(1)),\tag{5.76}
\end{align*}
where the term $o(1)$ depends only on $\tau$ and $\lim_{\tau \to 0}o(1)=0$ and $\int_{0}^{\infty} t^{-a-1} L(t) dt=K^{-1}$.

Now, using \eqref{eq:5.83} and \eqref{eq:5.84}, we obtain, for any $\tau \in (0,1)$
\begin{align*}\label{eq:5.85}
  &   E \left(E(1-\kappa_g\mid \tau,\sigma^2,\mathcal{D})\right) \\
   & \leq P( {W_{n,g}} >2c_1 \log G_n) + E \bigg[ \left(\tau^2 e^{\frac{W_{n,g}}{4}}+ K \int_{1}^{\infty} \frac{t ^{-a}\tau^2}{(1+t \tau^2)^{\frac{m_g}{2}+1}}  L(t) e^{\frac{t \tau^2}{1+t \tau^2} \cdot \frac{W_{n,g}}{2}} dt \right) (1+o(1))  1_{\{W_{n,g} \leq 2c_1 \log G_n\}} \bigg] \tag{5.77}
\end{align*}
Since, $\textrm{ for all } \notin \mathcal{A}$, $W_{n,g}\sim \chi^2_{m_g}$, so, by Lemma \ref{lem5},
\begin{equation*}\label{eq:5.86}
     P( {W_{n,g}} >2c_1 \log G_n) \lesssim G^{-{c_1}}_n (\log G_n)^{\frac{s}{2}-1} .\tag{5.78}
\end{equation*}
Next, for the second term in the right-hand side of \eqref{eq:5.85}, noting that  the term $(1+o(1))$ is independent of any particular $g$,
\begin{align*}\label{eq:5.87}
    E \bigg[ \tau^2 e^{\frac{W_{n,g}}{4} }(1+o(1))  1_{\{W_{n,g} \leq 2c_1 \log G_n\}} \bigg] & = \tau^2  \frac{1}{2^{\frac{m_g}{2}}\Gamma(\frac{m_g}{2})}\int_{0}^{2c_1 \log G_n} e^{\frac{u}{4}}e^{-\frac{u}{2}}u^{\frac{m_g}{2}-1}du (1+o(1))\\
    &= \tau^2 \frac{1}{\Gamma(\frac{m_g}{2})} \int_{0} 
 ^{{c_1 \log G_n}}e^{-\frac{u}{2}}u^{\frac{m_g}{2}-1}du (1+o(1))\\
    & \lesssim \tau^2 (1+o(1)).\tag{5.79}
\end{align*}
Finally, for the third term in the right-hand side of \eqref{eq:5.85}, note that
\begin{align*}\label{eq:5.88}
  &  \int_{0}^{2c_1 \log G_n} \int_{1}^{\infty} \frac{t \tau^2}{1+t \tau^2} \cdot \frac{1}{{(1+t \tau^2)}^{\frac{m_g}{2}}} t^{-a-1} L(t) e^{\frac{t \tau^2}{1+t \tau^2} \cdot \frac{u}{2}} e^{-\frac{u}{2}} u^{\frac{m_g}{2}-1} dt du \\
  &=   \int_{1}^{\infty} \frac{t \tau^2}{1+t \tau^2} \cdot \frac{1}{{(1+t \tau^2)}^{\frac{m_g}{2}}} t^{-a-1} L(t) \left(\int_{0}^{2c_1 \log G_n} e^{-\frac{u}{2(1+t \tau^2)}}  u^{\frac{m_g}{2}-1} du \right) dt \\
  &=  \int_{1}^{\infty} \frac{t \tau^2}{1+t \tau^2} \cdot t^{-a-1} L(t) \left(\int_{0}^{\frac{2c_1 \log G_n}{1+t \tau^2}} e^{-\frac{z}{2}} z^{\frac{m_g}{2}-1} dz \right) dt. \tag{5.80}
\end{align*}
Now, we provide an upper bound on \eqref{eq:5.88} separately for $a=1$ and $a>1$. \\
For $a>1$, using the boundedness of $L(t)$, it is easy to show that,
\begin{align*}\label{eq:5.89}
    \int_{1}^{\infty} \frac{t \tau^2}{1+t \tau^2} \cdot t^{-a-1} L(t) \left(\int_{0}^{\frac{2c_1 \log G_n}{1+t \tau^2}} e^{-\frac{z}{2}} z^{\frac{m_g}{2}-1} dz \right) dt & \lesssim \tau^2. \tag{5.81}
\end{align*}
For $a=1$, we have the following
\begin{align*}
  &  \int_{1}^{\infty} \frac{t \tau^2}{1+t \tau^2} \cdot t^{-a-1} L(t) \left(\int_{0}^{\frac{2c_1 \log G_n}{1+t \tau^2}} e^{-\frac{z}{2}} z^{\frac{m_g}{2}-1} dz \right) dt \\
    & \leq  \int_{1}^{\infty} \frac{t \tau^2}{1+t \tau^2} \cdot t^{-a-1} L(t) \left(\int_{0}^{\frac{2c_1 \log G_n}{t \tau^2}} e^{-\frac{z}{2}} z^{\frac{m_g}{2}-1} dz \right) dt  \\
    & \leq C_7 \int_{1}^{\infty} \frac{t \tau^2}{1+t \tau^2} \cdot t^{-a-1} L(t)  dt,
\end{align*}
where $C_7$ is a global constant independent of $n$. Next, note that, $1+t \tau^2 \geq \sqrt{t}$ if and only if $t \geq \frac{1}{\tau^4}$. As a result, we have
\begin{align*} \label{eq:5.90}
  &  \int_{1}^{\frac{1}{\tau^4}}  \frac{t \tau^2}{1+t \tau^2} \cdot t^{-2} L(t) dt 
  \leq M \tau^2 \int_{1}^{\frac{1}{\tau^4}} t^{-1} dt= M \tau^2 \log(\frac{1}{\tau^4}), \tag{5.82}
\end{align*}

    and
\begin{align*}\label{eq:5.91}
    \int_{\frac{1}{\tau^4}}^{\infty}  \frac{t \tau^2}{1+t \tau^2} \cdot t^{-2} L(t) dt 
  \leq M \tau^2  \int_{\frac{1}{\tau^4}}^{\infty} t^{-\frac{3}{2}} dt =2 M \tau^4.  \tag{5.83}
\end{align*}

    Hence, for $a=1$, the upper bound on \eqref{eq:5.88} is obtained as
    \begin{equation*} \label{eq:5.92}
         \int_{1}^{\infty} \frac{t \tau^2}{1+t \tau^2} \cdot t^{-a-1} L(t) \left(\int_{0}^{\frac{2c_1 \log G_n}{1+t \tau^2}} e^{-\frac{z}{2}} z^{\frac{m_g}{2}-1} dz \right) dt  \leq 2 M \tau^2 [\log(\frac{1}{\tau^4}) + \tau^2 ]. \tag{5.84}
    \end{equation*}
    As a consequence, for $a \geq 1$, the upper bound of the third term in the right-hand side of \eqref{eq:5.85} is of the form
    \begin{align*}\label{eq:5.93}
         E \bigg[  K \int_{1}^{\infty} \frac{t \tau^2}{1+t \tau^2} \cdot \frac{1}{{(1+t \tau^2)}^{\frac{m_g}{2}}} t^{-a-1} L(t) e^{\frac{t \tau^2}{1+t \tau^2} \cdot \frac{W_{n,g}}{2}} dt  (1+o(1))  1_{\{W_{n,g} \leq c_1 \log G_n\}} \bigg] & \lesssim \tau^2[\log(\frac{1}{\tau^2}) + \tau^2 ] (1+o(1)). \tag{5.85}
    \end{align*}
    Finally, substituting the upper bounds obtained in \eqref{eq:5.86}, \eqref{eq:5.87} and \eqref{eq:5.93} in \eqref{eq:5.85}
    with $\tau=\alpha_n=\frac{c_2G_{A_n}}{G_n}$, the upper bound on \eqref{eq:5.81} can be obtained as
    \begin{align*}
        &  \sum_{g \notin \mathcal{A}}P(E(1-\kappa_g\mid{\widehat{\tau}}^{\text{EB}},\sigma^2,\mathcal{D})>\frac{1}{2}, {\widehat{\tau}}^{\text{EB}} \leq 2 \alpha_n) \\
        & \lesssim G^{-{c_1}+1}_n (\log G_n)^{\frac{s}{2}-1}+ \frac{(G_{A_n})^2}{G_n}(1+o(1)) +  \frac{(G_{A_n})^2}{G_n} \log G_n(1+o(1)).
    \end{align*}
    Hence, for $c_1 \geq 2$ and $G^{\epsilon_1}_n \lesssim G_{\mathcal{A}_n} \lesssim G^{\epsilon_2}_n$ for some $0< \epsilon_1 <\epsilon_2<\frac{1}{2}$, we have
    \begin{align*}\label{eq:5.94}
\sum_{g \notin \mathcal{A}}P(E(1-\kappa_g\mid{\widehat{\tau}}^{\text{EB}},\sigma^2,\mathcal{D})>\frac{1}{2}, {\widehat{\tau}}^{\text{EB}} \leq 2 \alpha_n)=o(1), \textrm{ as } n\rightarrow\infty.\tag{5.86}
\end{align*}
This completes the proof for $a \geq 1$.\\
\textbf{Case-II} $\frac{1}{2} <a<1$. Again using the monotonicity of the shrinkage coefficient, the  term in the left-hand side of \eqref{eq:5.81} can be bounded as
\begin{align*}
     &  \sum_{g \notin \mathcal{A}}P(E(1-\kappa_g\mid{\widehat{\tau}}^{\text{EB}},\sigma^2,\mathcal{D})>\frac{1}{2}, {\widehat{\tau}}^{\text{EB}} \leq 2 \alpha_n) \\
   & \leq  \sum_{g \notin \mathcal{A}}P(E(1-\kappa_g\mid 2 \alpha_n,\sigma^2,\mathcal{D})>\frac{1}{2}) \\
   & \lesssim G_n \left(\frac{G_{A_n}}{G_n} \right)^{2a} \bigg[\log \left(\frac{G_n}{G_{A_n}}\right)\bigg]^{\frac{s}{2}-1} (1+o(1)).
\end{align*}
Here inequality in the last line follows due to the use of arguments similar to the proof of Theorem 1 where $\tau$ is assumed to be a tuning one.

Note that, for $\frac{1}{2} <a<1$, there exists $\epsilon \in (0,1-\frac{1}{2a})$ such that $2a(1-\epsilon)>1$. Hence, when $\frac{1}{2} <a<1$, for $G^{\epsilon_1}_n \lesssim G_{\mathcal{A}_n} \lesssim G^{\epsilon_2}_n$ for some $0< \epsilon_1 <\epsilon_2<1-\frac{1}{2a}$, we conclude 
 \begin{align*}\label{eq:5.95}
\sum_{g \notin \mathcal{A}}P(E(1-\kappa_g\mid{\widehat{\tau}}^{\text{EB}},\sigma^2,\mathcal{D})>\frac{1}{2}, {\widehat{\tau}}^{\text{EB}} \leq 2 \alpha_n)=o(1), \textrm{ as } n\rightarrow\infty \tag{5.87}
\end{align*}
and the proof is completed for $\frac{1}{2} <a<1$. 
\end{proof}


\textbf{Proof of Theorem \ref{Thm-6}:-}
\begin{proof}
To prove Theorem \ref{Thm-6}, we employ a similar set of arguments used in the proof of Theorem 2 when $\tau$ is used as a tuning parameter. Here, $T_1$ is as defined in Theorem 2 and $T_2= {\boldsymbol{\alpha}}^{\mathrm{T}} {\boldsymbol{\Sigma}^{\frac{1}{2}}_{\mathcal{A}}}(\widehat{\boldsymbol{\beta}}_{\mathcal{A},EB}^{\text{HT}}-\widehat{\boldsymbol{\beta}}_{\mathcal{A}})$. 
\newline
As in Theorem 2, it follows that for proving $T_2 \xrightarrow{P} 0$, it suffices to show that
\begin{equation*} \label{eq:5.96}
     \sum_{g \in \mathcal{A}} U_{ng}  \xrightarrow{P} 0 \hspace*{0.2cm} \text{as} \hspace*{0.2cm}  n \to \infty,
    \tag{5.88} 
\end{equation*}
where $$W_{n,g}=\frac{n\widehat{\boldsymbol{\beta}}_g^{\mathrm{T}}\mathbf{Q}_{n,g}\widehat{\boldsymbol{\beta}}_g}{\sigma^2}, \textrm{ and } U_{n,g}= W_{n,g} E(\kappa^2_g\mid \widehat{\tau}^{\text{EB}},\sigma^2,\mathcal{D}).$$ 
Now to establish \eqref{eq:5.96}, let us first fix any $\epsilon_0>0$, and take  $\gamma_n=\frac{1}{G_n}$, for $n\geq 1$. Note that, for fixed $\mathcal{D}=\{\mathbf{y}\}$ and $\sigma^2$, $E(\kappa_g\mid\tau,\sigma^2,\mathcal{D})$ is non-increasing in $\tau$, and $\widehat{\tau}^{\text{EB}}\geq \gamma_n$ for all $n\geq 1$. Therefore, for $\tau_n=\gamma_n$ and using the monotonicity of $\tau$, we only need to show that

\begin{align*} \label{eq:5.97}
  \sum_{g \in \mathcal{A}} U^{*}_{n,g} =  \sum_{g \in \mathcal{A}}  W_{n,g} E(\kappa^2_g\mid \gamma_n,\sigma^2,\mathcal{D}) \xrightarrow{P} 0 \hspace*{0.2cm} \text{as} \hspace*{0.2cm}  n \to \infty. \tag{5.89}
\end{align*}

Next, we proceed following the steps as mentioned in \eqref{eq:5.41}-\eqref{eq:5.44} with $\tau=\frac{1}{G_n}$. Now we define $C_n$ as $C_n = \{ W_{n,g} >2a\rho \frac{\sqrt{G_n}}{\log G_n} \}$ and $B_n$ as $B_n =\{ U^{*}_{n,g} >\frac{\epsilon_0}{|\mathcal{A}|} \}$. Hence, to obtain the optimal estimation rate, it is enough to show that
\begin{align*} \label{eq:5.98}
    \lim_{n \to \infty} \sum_{g \in \mathcal{A}} P(B_n)=0. \tag{5.90} 
\end{align*}
Here, observe that under the assumption that $|\mathcal{A}| =O( G^{\epsilon}_n), 0<\epsilon<\frac{1}{2}$, we have, for $k=1,2$,
\begin{align*}
    \sup_{W_{n,g} >2a\rho \frac{\sqrt{G_n}}{\log G_n}} h_k(W_{n,g},\frac{1}{G_n})&= o(\frac{1}{|\mathcal{A}|}).
\end{align*}
As a consequence of this, we have for all $\ g \in \mathcal{A}$,
\begin{align*}
    P(B_n \cap C_n)=0.
\end{align*}
This also ensures
\begin{align*} \label{eq:5.99}
  \lim_{n \to \infty}  \sum_{g \in \mathcal{A}} P(B_n \cap C_n)=0. \tag{5.91}
\end{align*}

 Next note that for $\tau=\frac{1}{G_n}$, we have $G_n \sqrt{\tau} \log (\frac{1}{\tau}) \to \infty$ as $n \to \infty$, and hence using similar set of arguments used in \eqref{eq:5.21}-\eqref{eq:5.26}, we can show that
\begin{align*} \label{eq:5.100}
\lim_{n\rightarrow\infty} \sum_{g \in \mathcal{A}}  P( C^{c}_n)&=0.   \tag{5.92}
\end{align*}
Finally combining \eqref{eq:5.99} and \eqref{eq:5.100} implies \eqref{eq:5.98} and \eqref{eq:5.97} holds. As a result, \eqref{eq:5.96} is also established, and that along with the use of Slutsky's Theorem completes the proof of Theorem \ref{Thm-6}.
\end{proof}

\textbf{Proof of Theorem \ref{Thm-8}:}\\
\begin{proof}
    Here also, we will show that
    \begin{align*}\label{eq:5.103}
\sum_{g \in \mathcal{A}} P(E(1-\kappa_g\mid{\widehat{\tau}}^{\text{EB}},\sigma^2,\mathcal{D}) < \frac{1}{2})=o(1), \textrm{ as } n\rightarrow\infty,\tag{5.93}    
\end{align*}
and
\begin{align*}\label{eq:5.104}
\sum_{g \notin \mathcal{A}}P(E(1-\kappa_g\mid{\widehat{\tau}}^{\text{EB}},\sigma^2,\mathcal{D})>\frac{1}{2})=o(1), \textrm{ as } n\rightarrow\infty,\tag{5.94}    
\end{align*}
both when $0.5\leq a<1$, and $a\geq 1$, where ${\widehat{\tau}}^{\text{EB}}$ is defined in \eqref{eq:3.2}.\\
Now, using the same technique as used in the proof of Theorem \ref{Thm-7} and taking $\gamma_n=(\frac{1}{G_n})^{2}$, for $n\geq 1$, and then following steps similar to \eqref{eq:5.78} and \eqref{eq:5.79}, we obtain
\begin{equation}
     \lim_{n\rightarrow \infty}\sum_{g \in \mathcal{A}} P(E(\kappa_g\mid\widehat{\tau}^{\text{EB}},\sigma^2,\mathcal{D})> \frac{1}{2})=0,\notag
\end{equation}
which completes the proof of \eqref{eq:5.103}. Now, we are left to prove \eqref{eq:5.104}. Again following the same steps as given in the proof of Theorem \ref{Thm-7} along with the use of \eqref{eq:5.69} with the use of \eqref{eq:3.6}, we obtain
\begin{align*}
   \sum_{g \notin \mathcal{A}} P(E(1-\kappa_g\mid\widehat{\tau}^{\text{EB}},\sigma^2,\mathcal{D})> \frac{1}{2}, \ ({\widehat{\tau}^{\text{EB}}})^{\frac{1}{2}}> 2\alpha_{n}) 
  &\leq G_n e^{-\frac{ G_{A_n}}{c_2}(1+o(1))} .
\end{align*}
Since, $G^{\epsilon_1}_n \lesssim G_{\mathcal{A}_n} \lesssim G^{\epsilon_2}_n$ for some $0< \epsilon_1 <\epsilon_2<\frac{1}{2}$, we can conclude that
\begin{align*}\label{eq:5.105}
\sum_{g \notin \mathcal{A}}P(E(1-\kappa_g\mid{\widehat{\tau}}^{\text{EB}},\sigma^2,\mathcal{D})>\frac{1}{2}, ({\widehat{\tau}^{\text{EB}}})^{\frac{1}{2}}> 2\alpha_{n})=o(1), \textrm{ as } n\rightarrow\infty.\tag{5.95}    
\end{align*}
Next using the monotonicity of $E(1-\kappa_g\mid\tau,\sigma^2,\mathcal{D})$, it follows 
\begin{align*}
    \sum_{g \notin \mathcal{A}} P(E(1-\kappa_g\mid\widehat{\tau}^{\text{EB}},\sigma^2,\mathcal{D})> \frac{1}{2}, \ ({\widehat{\tau}^{\text{EB}}})^{\frac{1}{2}} \leq 2\alpha_{n}) 
  &\leq  \sum_{g \notin \mathcal{A}} P(E(1-\kappa_g\mid (2 \alpha_n)^{2},\sigma^2,\mathcal{D})> \frac{1}{2}) \\
  & \lesssim G_n \alpha^{2}_n [\log (\frac{1}{\alpha_n})]^{\frac{s}{2}} .
\end{align*}
With the choice of $\alpha_n= \frac{c_2 G_{A_n}}{G_n}$, $c_2 \geq 1$ for all $n \geq 1$, and since, $G^{\epsilon_1}_n \lesssim G_{\mathcal{A}_n} \lesssim G^{\epsilon_2}_n$ for some $0< \epsilon_1 <\epsilon_2<\frac{1}{2}$, we conclude that 
\begin{align*}\label{eq:5.106}
\sum_{g \notin \mathcal{A}}P(E(1-\kappa_g\mid{\widehat{\tau}}^{\text{EB}},\sigma^2,\mathcal{D})>\frac{1}{2}, ({\widehat{\tau}^{\text{EB}}})^{\frac{1}{2}} \leq  2\alpha_{n})=o(1), \textrm{ as } n\rightarrow\infty.\tag{5.96}    
\end{align*}
Combining \eqref{eq:5.105} and \eqref{eq:5.106}, we obtain \eqref{eq:5.104} and the proof is completed.
\end{proof}

\textbf{Proof of Theorem \ref{Thm-9}:}\\
\begin{proof}
    To show that the decision rule \eqref{eq:2.14} is an oracle, at first, we show the selection consistency part. Using similar arguments used before, again we have
    \begin{align*}\label{eq:5.107}
	P(\mathcal{A}_n \neq \widehat{\mathcal{A}}_n) \leq \sum_{g \in \mathcal{A}} P(E(1-\kappa_g\mid
 \sigma^2,\mathcal{D}) < \frac{1}{2})+\sum_{g \notin \mathcal{A}}P(E(1-\kappa_g\mid \sigma^2,\mathcal{D})>\frac{1}{2}).\tag{5.97}
\end{align*}
Here also, our target is to show the following:
\begin{align*}\label{eq:5.108}
\sum_{g \in \mathcal{A}} P(E(1-\kappa_g\mid
\sigma^2,\mathcal{D}) < \frac{1}{2})=o(1), \textrm{ as } n\rightarrow\infty,\tag{5.98}    
\end{align*}
and
\begin{align*}\label{eq:5.109}
\sum_{g \notin \mathcal{A}}P(E(1-\kappa_g\mid \sigma^2,\mathcal{D})>\frac{1}{2})=o(1), \textrm{ as } n\rightarrow\infty,\tag{5.99}    
\end{align*}
both when $0.5\leq a<1$, and $a\geq 1$.\\
In order to show \eqref{eq:5.108}, first note that with the use of \hyperlink{condition1}{D1},
\begin{align*}
    E(\kappa_g\mid \sigma^2,\mathcal{D}) &= \int_{\gamma_{1n}}^{\gamma_{2n}} E(\kappa_g\mid \tau_n, \sigma^2,\mathcal{D}) \pi(\tau_n|\mathcal{D}) d \tau \\
    & \leq E(\kappa_g\mid \gamma_{1n}, \sigma^2,\mathcal{D}).
\end{align*}

   Here, first, we use the fact that given $\tau_n, \sigma^2$ and $\mathcal{D}$, the posterior mean of $\kappa_g$ depends only on $g^{th}$ group. Inequality in the last line follows since, for any fixed $\tau_n$ and $\sigma^2$, $E(\kappa_g \mid \tau_n, \sigma^2, \mathcal{D})$ is non-increasing in $\tau$. As a result of this, we have
   \begin{equation*} \label{eq:5.110}
       \sum_{g \in \mathcal{A}} P(E(1-\kappa_g\mid
\sigma^2,\mathcal{D}) < \frac{1}{2}) \leq \sum_{g \in \mathcal{A}} P(E(\kappa_g\mid \gamma_{1n}, \sigma^2,\mathcal{D})>\frac{1}{2}). \tag{5.100} 
   \end{equation*}
Next using the same steps as used in \eqref{eq:5.20}-\eqref{eq:5.26} with $\tau_n=\gamma_{1n}$ and noting that $\gamma_{1n}$ satisfies $\log (\frac{1}{\tau_n}) \asymp \log(G_n)$, along with the use of \eqref{eq:5.110}, \eqref{eq:5.108} is established. \\
In case of proving \eqref{eq:5.109}, note that
\begin{align*}
    E(1-\kappa_g\mid \sigma^2,\mathcal{D}) &= \int_{\gamma_{1n}}^{\gamma_{2n}} E(1-\kappa_g\mid \tau_n, \sigma^2,\mathcal{D}) \pi(\tau_n|\mathcal{D}) d \tau \\
    & \leq E(1-\kappa_g\mid \gamma_{2n}, \sigma^2,\mathcal{D}).
\end{align*}
    Here, first, we use the fact that given $\tau_n, \sigma^2$ and $\mathcal{D}$, the posterior mean of $1-\kappa_g$ depends only on $g^{th}$ group. Inequality in the last line follows since, for any fixed $\tau_n$ and $\sigma^2$, $E(1-\kappa_g \mid \tau_n, \sigma^2, \mathcal{D})$ is non-decreasing in $\tau$. As a consequence, it follows that
    \begin{align*}
        \sum_{g \notin \mathcal{A}}P(E(1-\kappa_g\mid \sigma^2,\mathcal{D})>\frac{1}{2}) & \leq \sum_{g \notin \mathcal{A}} P(E(1-\kappa_g\mid \gamma_{2n}, \sigma^2,\mathcal{D})>\frac{1}{2}) \\
        & \lesssim G_n \gamma_{2n} [\log (\frac{1}{\gamma_{2n}})]^{\frac{s}{2}-1}(1+o(1)).
    \end{align*}
    Inequality in the last line follows due to the use of arguments similar to those used in \eqref{eq:5.27}-\eqref{eq:5.31}. Finally, under the assumption that $G_n \gamma_{2n} [\log (\frac{1}{\gamma_{2n}})]^{\frac{s}{2}-1} \to 0$ as $n \to \infty$, \eqref{eq:5.109} also holds and completes the proof related to variable selection consistency. \\
    Next, we move forward to show the optimal estimation. Here we aim to show, 
    \begin{align*}
        {\boldsymbol{\alpha}}^{\mathrm{T}} {\boldsymbol{\Sigma}^{\frac{1}{2}}_{\mathcal{A}}}(\widehat{\boldsymbol{\beta}}_{\mathcal{A},FB}^{\text{HT}}-\widehat{\boldsymbol{\beta}}_{\mathcal{A}}) \xrightarrow{P} 0 \textrm{ as } n \to \infty.
    \end{align*}
Now making use of the same arguments as used in \eqref{eq:5.34}-\eqref{eq:5.40} of Theorem \ref{Thm-2}, we only need to show that
\begin{align*} \label{eq:5.111}
  \sum_{g \in \mathcal{A}}  W_{n,g} E(\kappa^2_g\mid \gamma_{1n},\sigma^2,\mathcal{D}) \xrightarrow{P} 0 \hspace*{0.2cm} \text{as} \hspace*{0.2cm}  n \to \infty. \tag{5.101}
\end{align*}
Next we follow the same steps as mentioned in \eqref{eq:5.41}-\eqref{eq:5.48} with $\tau=\gamma_{1n}$.  
Note that since $\gamma_{1n}$ satisfies both \ref{assmpan-3} and $\log (\frac{1}{\gamma_{1n}}) \asymp \log(G_n)$, it is immediate that
\eqref{eq:5.111} holds and as a result, $T_2 \xrightarrow{P} 0$ as $ n \to \infty$. Finally, the use of Slutsky's Theorem completes the proof of Theorem \ref{Thm-9}.

\end{proof}

\section{Concluding Remarks}
\label{sec-6}
In this paper, we studied the problem of finding relevant groups of predictors/regressors in a sparse high-dimensional regression model, assuming that the potential regressors are inherently grouped. For the selection of groups, we considered a half-thresholding rule based on a broad class of global-local shrinkage priors with polynomial tails. We also studied an estimator of group regression coefficients based on this rule.
Our key contributions are as follows.

\begin{enumerate}
    \item First, we extend the idea of one-group global-local shrinkage priors in group regression problems to facilitate group selection by factorizing the similarity within the groups and heterogeneity across groups through the prior distribution of the group coefficients.
\item Secondly, using such priors, we have proposed a half-thresholding rule that can be easily implemented when the sparsity level is known or unknown. It is shown that when the proportion of active groups is known, the global shrinkage parameter can be chosen in such a way using this information that the resulting decision rule achieves both variable selection consistency and the estimate achieves optimal estimation rate simultaneously, a property represented as Oracle property by Fan and Li (2001) \cite{fan2001variable}, Zou (2006) \cite{zou2006adaptive}.
We propose empirical Bayes and full Bayes approaches for dealing with the global shrinkage parameter when the proportion of active groups is unknown. The empirical Bayes estimates used in this context generalize an empirical Bayes estimate proposed by van der Pas et al. (2014) \cite{van2014horseshoe} in the normal means problem. These results also successfully resolve a question left open in Tang et al. (2018) \cite{tang2018bayesian} regarding the Oracle property of the empirical Bayes version of their half-thresholding rule.

\item Third, our rigorous analytical treatment allows us to successfully get around the soft spot in the argument of Tang et al. (2018) \cite{tang2018bayesian} to prove the oracle optimality property of their half-thresholding estimator and complete the proof ( with modifications as above). These results are the first of their kind in the literature and require developing novel analytical techniques for their theoretical proofs.

\item Finally, in our simulation studies, we have compared different versions of our proposed decision rule to some well-known group selection methods in the literature. We have demonstrated that our proposed decision rule outperforms these methods when the number of observations is small or moderate. When the number of observations is large, the methods due to Yang and Narisetty (2020) \cite{yang2020consistent} and Xu and Ghosh (2015) \cite{xu2015bayesian} are comparable.
Therefore, our proposed HT rule can be a viable alternative to the methods available in the literature based on spike and slab priors when dealing with sparse situations.
\end{enumerate}

Throughout this work, our focus is only on selecting relevant groups. However, this might not always be the case. In many cases, the selection of variables at the group and individual levels might be desirable. One such example is presented by Huang et al. (2012) \cite{huang2012selective}. In genome-wide association studies, genetic variations in the same gene form a natural group. However, a genetic variation related to the disease does not necessarily mean that
all other variations in the same gene are also associated with the disease. Hence, in this situation, the selection of important genes both at the group and individual level becomes natural. This problem is known as \textit{ bilevel selection}. Although both Xu and Ghosh (2015) \cite{xu2015bayesian} and Boss et al. (2023) \cite{boss2023group} 
studied this problem, no theoretical guarantee of selection consistency or optimal estimation rate has been achieved in this context for bilevel selection. Hence, it becomes equally important to answer the same question in our hierarchical formulation. This might be an interesting problem to consider elsewhere. \\
\hspace*{0.5cm} Another least visited problem in this literature is a situation where the consideration groups overlap. As mentioned by Huang et al. (2012) \cite{huang2012selective}, in genomic data analysis involving both
genes and pathways, many important genes belong to more than one pathway. Hence, in this context, it is challenging to select important variables without all the groups that contain them. Some of the works in this direction are due to Jacob, Obozinski, and Vert (2009) \cite{jacob2009group}, Liu and Ye (2010) \cite{liu2010fast}, Zhao, Rocha and Yu (2009) \cite{zhao2009composite}, etc. A possible question left unanswered in the literature of one-group priors is to incorporate this situation and provide a corresponding decision rule. We want to study this problem in the future.


\begin{thebibliography}{3}

\bibitem[1]{armagan2011generalized}
Armagan, Artin and Clyde, Merlise and Dunson, David (2011) . Generalized beta mixtures of Gaussians.
{\em Advances in neural information processing systems}, 24.

\bibitem[2]{armagan2013generalized}
Armagan, Artin and Dunson, David B and Lee, Jaeyong (2013) . Generalized double Pareto shrinkage.
{\em Statistica Sinica}, 23, 119.

\bibitem[3]{armagan2013posterior}
Armagan, Artin and Dunson, David B and Lee, Jaeyong and Bajwa, Waheed U and Strawn, Nate (2013) . Posterior consistency in linear models under shrinkage priors.
{\em Biometrika}, 100(4), 1011--1018.


\bibitem[4]{bhadra2017horseshoe+}
Bhadra, Anindya and Datta, Jyotishka and Polson, Nicholas G and Willard, Brandon (2017) . The horseshoe+ estimator of ultra-sparse signals.
{\em Bayesian Analysis}, 12(4), 1105--1131.

\bibitem[5]{bhattacharya2015dirichlet}
Bhattacharya, Anirban and Pati, Debdeep and Pillai, Natesh S and Dunson, David B (2013). Dirichlet--Laplace priors for optimal shrinkage.
{\em Journal of the American Statistical Association}, 110(512), 1479--1490.

 
 

\bibitem[6]{bickel2009simultaneous}
Bickel, Peter J and Ritov, Ya’acov and Tsybakov, Alexandre B (2009) . Simultaneous analysis of Lasso and Dantzig selector.
{\em The Annals of statistics}, 37(4), 1705--1732.

\bibitem[7]{bingham_goldie_teugels_1987}
Bingham, N. H. and Goldie, C. M. and Teugels, J. L. (1987) . Regular Variation.
{\em Cambridge University Press}.


\bibitem[8]{bogdan2011asymptotic}
Bogdan, Ma{\l}gorzata and Chakrabarti, Arijit and Frommlet, Florian and Ghosh, Jayanta K (2011) . Asymptotic Bayes-optimality under sparsity of some multiple testing procedures.
{\em The Annals of Statistics}, 39(3), 1551--1579.

\bibitem[9]{bogdan2008comparison}
Bogdan, Ma{\l}gorzata and Ghosh, Jayanta K and Tokdar, Surya T (2008) . A comparison of the Benjamini-Hochberg procedure with some Bayesian rules for multiple testing.
{\em arXiv preprint arXiv:0805.2479}.


\bibitem[10]{brown2010inference}
Brown, Philip J and Griffin, Jim E (2010) . Inference with normal-gamma prior distributions in regression problems.
{\em Bayesian Analysis}, 5(1), 171--188.

\bibitem[11]{candes2007dantzig}
Candes, Emmanuel and Tao, Terence (2007) . The Dantzig selector: Statistical estimation when p is much larger than n.
{\em The annals of Statistics}, 35(6), 2313--2351.


\bibitem[12]{caron2008sparse}
Caron, Fran{\c{c}}ois and Doucet, Arnaud (2008) . Sparse Bayesian nonparametric regression.
{\em Proceedings of the 25th international conference on Machine learning}, 88--95.

\bibitem[13]{carvalho2009handling}
Carvalho, Carlos M and Polson, Nicholas G and Scott, James G (2009) . Handling sparsity via the horseshoe.
{\em Artificial Intelligence and Statistics}, 73--80.

\bibitem[14]{carvalho2010horseshoe}
Carvalho, Carlos M and Polson, Nicholas G and Scott, James G (2010) . The horseshoe estimator for sparse signals.
{\em Biometrika}, 97(2), 465--480.

\bibitem[15]{casella2010penalized}
Casella, George and Ghosh, Malay and Gill, Jeff and Kyung, Minjung (2010) . Penalized regression, standard errors, and Bayesian lassos.
{\em Bayesian Analysis}, 5(2), 369--411.


\bibitem[16]{chen2012extended}
Chen, Jiahua and Chen, Zehua (2012) . Extended BIC for small-n-large-p sparse GLM.
{\em Statistica Sinica}, 97(2), 555--574.

\bibitem[17]{damlen1999gibbs}
Damlen, Paul and Wakefield, John and Walker, Stephen (1999) . Gibbs sampling for Bayesian non-conjugate and hierarchical models by using auxiliary variables.
{\em Journal of the Royal Statistical Society: Series B (Statistical Methodology)}, 61(2), 331--344.


\bibitem[18]{datta2013asymptotic}
Datta, Jyotishka and Ghosh, Jayanta K (2013) . Asymptotic properties of Bayes risk for the horseshoe prior.
{\em Bayesian Analysis}, 8(1), 111--132.



\bibitem[19]{donoho1992maximum}
Donoho, D. L., Johnstone, I. M. and Hoch, J. C. and Stern, A. S. (1992). Maximum entropy and the nearly black object.
{\em Journal of the Royal Statistical Society: Series B (Methodological)}, 54(1), 41--67.


\bibitem[20]{efron2004large}
Efron, Bradley (2004) . Large-scale simultaneous hypothesis testing: the choice of a null hypothesis.
{\em Journal of the american statistical association}, 99(465), 96--104.



\bibitem[21]{fan2001variable}
Fan, Jianqing and Li, Runze (2001) .Variable selection via nonconcave penalized likelihood and its oracle properties.
{\em Journal of the american statistical association}, 96(456), 1348--1360.

\bibitem[22]{fan2004nonconcave}
Fan, Jianqing and Peng, Heng (2004) .Nonconcave penalized likelihood with a diverging number of parameters.
{\em The annals of statistics}, 32(3), 928--961.

\bibitem[23]{fan2010selective}
Fan, Jianqing and Lv, Jinchi (2010) .A selective overview of variable selection in high dimensional feature space.
{\em Statistica Sinica}, 20(1), 101.


\bibitem[24]{gabcke2015}
Gabcke, Wolfgang (2015) .Derivation of the Riemann-Siegel formula with explicit estimates of its remainders.
{\em http://dx.doi.org/10.53846/goediss-5113}.


\bibitem[25]{george1993variable}
George, Edward I and McCulloch, Robert E (1993) .Variable selection via Gibbs sampling.
{\em Journal of the American Statistical Association}, 88(423), 881--889.


\bibitem[26]{bach2008consistency}
Bach, Francis R (2008) .Consistency of the group lasso and multiple kernel learning.
{\em Journal of Machine Learning Research}, 9(6).

\bibitem[27]{geweke1996variable}
Geweke, John (1996) .Variable selection and model comparison in regression.
{\em In Bayesian Statistics 5}.


\bibitem[28]{ghosh2017asymptotic}
Ghosh, Prasenjit and Chakrabarti, Arijit (2017). Asymptotic optimality of one-group shrinkage priors in sparse high-dimensional problems.
{\em Bayesian Analysis}, 12(4), 1133--1161.

\bibitem[29]{ghosh2016asymptotic}
Ghosh, Prasenjit and Tang, Xueying and Ghosh, Malay and Chakrabarti, Arijit (2016). Asymptotic properties of Bayes risk of a general class of shrinkage priors in multiple hypothesis testing under sparsity.
{\em Bayesian Analysis}, 11(3), 753--796.

\bibitem[30]{ghosal2000convergence}
Ghosal, Subhashis and Ghosh, Jayanta K and Van Der Vaart, Aad W (2000) . Convergence rates of posterior distributions.
{\em The Annals of statistics}, 500--531.

\bibitem[31]{griffin2005alternative}
Griffin, JE and Brown, PJ (2005) . Alternative prior distributions for variable selection with very many more variables than observations.
{\em Technical report, University of Warwick}.

\bibitem[32]{hoerl1970ridge}
Hoerl, Arthur E and Kennard, Robert W (1970) . Ridge regression: Biased estimation for nonorthogonal problems.
{\em Technometrics}, 12(1), 55--67.



\bibitem[33]{jiang2009general}
Jiang, Wenhua and Zhang, Cun-Hui (2009) . General maximum likelihood empirical Bayes estimation of normal means.
{\em The Annals of Statistics}, 37(4), 1647--1684.

\bibitem[34]{johnstone2004needles}
Johnstone, Iain M and Silverman, Bernard W (2004). Needles and straw in haystacks: Empirical Bayes estimates of possibly sparse sequences.
{\em The Annals of Statistics}, 32(4), 1594--1649.

\bibitem[35]{li2010bayesian}
Li, Qing and Lin, Nan (2010) . The Bayesian elastic net.
{\em Bayesian analysis}, 5(1), 151--170.

\bibitem[36]{mitchell1988bayesian}
Mitchell, Toby J and Beauchamp, John J (1988) . Bayesian variable selection in linear regression.
{\em Journal of the american statistical association}, 83(404), 1023--1032.

\bibitem[37]{narisetty2014bayesian}
Narisetty, Naveen Naidu and He, Xuming (2014) . Bayesian variable selection with shrinking and diffusing priors.
{\em The Annals of Statistics}, 42(2), 789--817.


\bibitem[38]{park2008bayesian}
Park, Trevor and Casella, George (2008). The bayesian lasso.
{\em Journal of the american statistical association}, 103(482), 681--686.

\bibitem[39]{polson2010shrink}
Polson, Nicholas G and Scott, James G (2010) . Shrink globally, act locally: Sparse Bayesian regularization and prediction.
{\em Bayesian statistics}, 9, 501--538.

\bibitem[40]{polson2012local}
Polson, Nicholas G and Scott, James G (2012) . Local shrinkage rules, L{\'e}vy processes and regularized regression.
{\em Journal of the Royal Statistical Society: Series B (Statistical Methodology)}, 74(2), 287--311.



\bibitem[41]{rovckova2018bayesian}
Ro{\v{c}}kov{\'a}, Veronika (2018) . Bayesian estimation of sparse signals with a continuous spike-and-slab prior.
{\em The Annals of Statisitcs}, 46(1), 558--575..

\bibitem[42]{rovckova2018spike}
Ro{\v{c}}kov{\'a}, Veronika and George, Edward I (2018) . The spike-and-slab lasso.
{\em Journal of the American Statistical Association}, 113(521), 431--444.


\bibitem[43]{rigollet2012sparse}
Rigollet, Philippe and Tsybakov, Alexandre B (2012) . Sparse estimation by exponential weighting.
{\em Statistical Science}, 27(4),401--437.


\bibitem[44]{tipping2001sparse}
Tipping, Michael E (2001) . Sparse Bayesian learning and the relevance vector machine.
{\em Journal of machine learning research}, 1(Jun), 211--244.

\bibitem[45]{johnson2012bayesian}
Johnson, Valen E and Rossell, David (2012) . Bayesian model selection in high-dimensional settings.
{\em Journal of the American Statistical Association}, 107(498), 649--660.



\bibitem[46]{van2014horseshoe}
Van Der Pas, St{\'e}phanie L and Kleijn, Bas JK and Van Der Vaart, Aad W (2014). The horseshoe estimator: Posterior concentration around nearly black vectors.
{\em Electronic Journal of Statistics}, 8(2), 2585--2618.


\bibitem[47]{van2016conditions}
Van Der Pas, SL and Salomond, J-B and Schmidt-Hieber, Johannes (2016) . Conditions for posterior contraction in the sparse normal means problem.
{\em Electronic journal of statistics}, 10(1), 976--1000.



\bibitem[48]{van2017adaptive}
Van der Pas, St{\'e}phanie and Szab{\'o}, Botond and Van der Vaart, Aad (2017). Adaptive posterior contraction rates for the horseshoe.
{\em Electronic Journal of Statistics}, 11(2), 3196--3225.


\bibitem[49]{hahn2015decoupling}
Hahn, P Richard and Carvalho, Carlos M (2015). Decoupling shrinkage and selection in Bayesian linear models: a posterior summary perspective.
{\em Journal of the American Statistical Association}, 110(509), 435--448.

\bibitem[50]{yang2020consistent}
Yang, Xinming and Narisetty, Naveen N (2020). Consistent group selection with Bayesian high dimensional modeling.
{\em Bayesian Analysis}, 15(3), 909--935.


\bibitem[51]{zou2005regularization}
Zou, Hui and Hastie, Trevor (2005). Regularization and variable selection via the elastic net.
{\em Journal of the royal statistical society: series B (statistical methodology)}, 67(2), 301--320.


\bibitem[52]{zhang2007penalized}
Zhang, Cun Hui (2007). Penalized linear unbiased selection.
{\em Department of Statistics and Bioinformatics, Rutgers University}, 3, 894--942.

\bibitem[53]{zhang2008sparsity}
Zhang, Cun-Hui and Huang, Jian (2008). The sparsity and bias of the lasso selection in high-dimensional linear regression.
{\em The Annals of Statistics}, 36(4), 1567--1594.

\bibitem[54]{zhang2006model}
Zhang, Cun-Hui and Huang, Jian (2006). Model-selection consistency of the lasso in highdimensional linear regression.
{\em The Annals of Statistics}, 36(4), 1567--1594.

\bibitem[55]{tang2018bayesian}
Tang, Xueying and Xu, Xiaofan and Ghosh, Malay and Ghosh, Prasenjit (2018). Bayesian variable selection and estimation based on global-local shrinkage priors.
{\em Sankhya A}, 80(2), 215--246.

\bibitem[56]{xu2016bayesian}
Xu, Zemei and Schmidt, Daniel F and Makalic, Enes and Qian, Guoqi and Hopper, John L (2016). Bayesian grouped horseshoe regression with application to additive models.
{\em Australasian Joint Conference on Artificial Intelligence}, 229--240.

\bibitem[57]{zou2006adaptive}
Zou, Hui (2006). The adaptive lasso and its oracle properties.
{\em Journal of the American Statistical Association}, 101(476), 1418--1429.

\bibitem[58]{wang2008note}
Wang, Hansheng and Leng, Chenlei (2008). A note on adaptive group lasso.
{\em Computational statistics \& data analysis}, 52(12), 5277--5286.

\bibitem[59]{raman2009bayesian}
Raman, Sudhir and Fuchs, Thomas J and Wild, Peter J and Dahl, Edgar and Roth, Volker (2009). The Bayesian group-lasso for analyzing contingency tables.
{\em Proceedings of the 26th Annual International Conference on Machine Learning},  881--888.

\bibitem[60]{xu2015bayesian}
Xu, Xiaofan and Ghosh, Malay (2015). Bayesian variable selection and estimation for group lasso.
{\em Bayesian Analysis}, 10(4), 909--936.

\bibitem[61]{wei2010consistent}
Wei, Fengrong and Huang, Jian (2010). Consistent group selection in high-dimensional linear regression.
{\em Bernoulli: official journal of the Bernoulli Society for Mathematical Statistics and Probability}, 16(4), 1369.


\bibitem[62]{tibshirani1996regression}
Tibshirani, Robert (1996). Regression shrinkage and selection via the lasso.
{\em Journal of the Royal Statistical Society: Series B (Methodological)}, 58(1), 267--288.

\bibitem[63]{yuan2006model}
Yuan, Ming and Lin, Yi (2006). Model selection and estimation in regression with grouped variables.
{\em Journal of the Royal Statistical Society: Series B (Methodological)}, 68(1), 49--67.

\bibitem[64]{castillo2015bayesian}
Castillo, Isma{\"e}l and Schmidt-Hieber, Johannes and Van der Vaart, Aad (2015). Bayesian linear regression with sparse priors.
{\em The Annals of Statistics}, 43(5), 1986--2018.


\bibitem[65]{scott2010bayes}
Scott, James G and Berger, James O (2010). Bayes and empirical-Bayes multiplicity adjustment in the variable-selection problem.
{\em The Annals of Statistics}, 2587--2619.


\bibitem[66]{george1997approaches}
George, Edward I and McCulloch, Robert E(1997). \em{Statistica sinica}, {339--373}.



\bibitem[67]{huang2012selective}
Huang, Jian and Breheny, Patrick and Ma, Shuangge (2012). A selective review of group selection in high-dimensional models. \em{Statistical science: a review journal of the Institute of Mathematical Statistics}, {27},
{4}.


\bibitem[68]{jacob2009group}
Jacob, Laurent and Obozinski, Guillaume and Vert, Jean-Philippe (2009).
Group lasso with overlap and graph lasso. \em{Proceedings of the 26th annual international conference on machine learning}, {433--440},
 

\bibitem[69]{liu2010fast}
Liu, Jun and Ye, Jieping (2010).
\em{arXiv preprint arXiv:1009.0306}.


\bibitem[70]{zhao2009composite}
Zhao, Peng and Rocha, Guilherme and Yu, Bin (2009).
The composite absolute penalties family for grouped and hierarchical variable selection.

\bibitem[71]{zellner1962efficient}
Zellner, Arnold (1962).
An efficient method of estimating seemingly unrelated regressions and tests for aggregation bias.
\em{Journal of the American statistical Association}, {348--368},
  

\bibitem[72]{caruana1997multitask}
Caruana, Rich (1997).
Multitask learning.
\em{Machine learning}, {41--75},
 
\bibitem[73]{argyriou2008convex}
Argyriou, Andreas and Evgeniou, Theodoros and Pontil, Massimiliano (2008).
\em{Machine learning}, {243--272},
  

\bibitem[74]{gelman2006prior}
Gelman, Andrew (2006).
Prior distributions for variance parameters in hierarchical models (comment on article by Browne and Draper).


\bibitem[75]{zellner1986assessing}
Zellner, Arnold (1986).
On assessing prior distributions and Bayesian regression analysis with g-prior distributions.
\em{Bayesian inference and decision techniques}.


\bibitem[76]{zhang2010nearly}
Zhang, Cun-Hui (2010).
Nearly unbiased variable selection under minimax concave penalty.

\bibitem[77]{mukhopadhyay2015consistency}
Mukhopadhyay, Minerva and Samanta, Tapas and Chakrabarti, Arijit (2015).
On consistency and optimality of Bayesian variable selection based on g-prior in normal linear regression models.
\em{Annals of the Institute of Statistical Mathematics}, {963--997}.


 \bibitem[78]{bayarri2012criteria}
Bayarri, Maria J and Berger, James O and Forte, Anabel and Garc{\'\i}a-Donato, Gonzalo (2012).
Criteria for Bayesian model choice with application to variable selection.

\bibitem[79]{paul2022posterior}
Paul, Sayantan and Chakrabarti, Arijit (2023).
Posterior Contraction rate and Asymptotic Bayes Optimality for one group global-local shrinkage priors in sparse normal means problem
\em{arXiv preprint arXiv:2211.02472v2}.

\bibitem[80]{zou2009adaptive}
Zou, Hui and Zhang, Hao Helen (2009).
On the adaptive elastic-net with a diverging number of parameters.
\em{Annals of statistics}, {37}, {4}, {1733}.

\bibitem[81]{zhao2006model}
Zhao, Peng and Yu, Bin (2006).
On model selection consistency of Lasso
\em{The Journal of Machine Learning Research}, {2541--2563}.
  

\bibitem[82]{zhang2016oracle}
Zhang, Caiya and Xiang, Yanbiao (2016).
On the oracle property of adaptive group lasso in high-dimensional linear models.
\em{Statistical Papers}, {249--265}.
  
\bibitem[83]{wang2019adaptive}
Wang, Mingqiu and Tian, Guo-Liang (2019).
Adaptive group Lasso for high-dimensional generalized linear models.
\em{Statistical Papers},
  {1469--1486}.
  
\bibitem[84]{maruyama2011fully}
Maruyama, Yuzo and George, Edward I (2011).
Fully Bayes factors with a generalized g-prior.

\bibitem[85]{liang2008mixtures}
Liang, Feng and Paulo, Rui and Molina, German and Clyde, Merlise A and Berger, Jim O (2008).
Mixtures of g priors for Bayesian variable selection.
\em{Journal of the American Statistical Association}, {410--423},
  
\bibitem[86]{som2014block}
Som, Agniva and Hans, Christopher M and MacEachern, Steven N (2014).
Block hyper-g priors in Bayesian regression.
\em{arXiv preprint arXiv:1406.6419}.
  
\bibitem[87]{boss2023group}
Boss, Jonathan and Datta, Jyotishka and Wang, Xin and Park, Sung Kyun and Kang, Jian and Mukherjee, Bhramar (2023).
Group inverse-gamma gamma shrinkage for sparse linear models with block-correlated regressors.
\em{Bayesian Analysis},
  year={2023}.

 \bibitem[88]{hosmer1989applied}
 Hosmer, D.W. and Lemeshow, Stanley (1989). Applied logistic regression. \em{John Wiley \& Sons}, year={1989}. 

 
 \end{thebibliography}
\end{document}